\theoremstyle{plain}
\newtheorem{theorem}{Theorem}[section]
\newtheorem{prop}[theorem]{Proposition}
\theoremstyle{remark}
\newtheorem{remark}[theorem]{Remark}
\let\a\alpha
\let\b\beta
\let\g\gamma
\let\d\delta
\let\e\varepsilon
\let\z\zeta
\let\l\lambda
\let\m\mu
\let\f\varphi
\let\L\Lambda
\let\Om\Omega
\def\lra{\longrightarrow}
\def\egal{\ar@{=}}
\def\A{\mathcal A}
\def\C{\mathbb C}
\def\CC{\mathcal C}
\def\E{\mathcal E}
\def\F{\mathcal F}
\def\G{\mathcal G}
\def\I{\mathcal I}
\def\J{\mathcal J}
\def\P{\mathbb P}
\def\NN{\mathcal N}
\def\O{\mathcal O}
\def\TT{\mathcal T}
\def\U{\mathbb U}
\def\W{\mathbb W}
\def\Ker{{\mathcal Ker}}
\def\Coker{{\mathcal Coker}}
\def\Im{{\mathcal Im}}
\def\D{{\scriptscriptstyle \operatorname{D}}}
\def\T{{\scriptscriptstyle \operatorname{T}}}
\def\ss{{\scriptstyle \operatorname{ss}}}
\def\st{{\scriptstyle \operatorname{s}}}
\def\dd{\operatorname{d}}
\def\EE{\operatorname{E}}
\def\H{\operatorname{H}}
\def\h{\operatorname{h}}
\def\M{\operatorname{M}_{{\mathbb P}^2}}
\def\N{\operatorname{N}}
\def\pp{\operatorname{p}}
\def\PP{\operatorname{P}}
\def\SS{\operatorname{S}}
\def\TTT{\operatorname{T}}
\def\Hom{\operatorname{Hom}}
\def\Aut{\operatorname{Aut}}
\def\Ext{\operatorname{Ext}}
\def\GL{\operatorname{GL}}
\def\length{\operatorname{length}}
\def\rank{\operatorname{rank}}
\def\spann{\operatorname{span}}
\def\Hilb{\operatorname{Hilb}_{{\mathbb P}^2}}
\def\tensor{\otimes}
\def\isom{\simeq}
\newcommand{\noi}{\noindent}
\def\ds{\displaystyle}
\def\ba{\begin{array}}
\def\ea{\end{array}}
\begin{document}

\subjclass{Primary 14D20, 14D22}

\title[semi-stable plane sheaves with Hilbert polynomial $\operatorname{P}(m)=6m+3$]
{on the geometry of the moduli space of semi-stable plane sheaves
with Hilbert polynomial $\mathbf{\operatorname{\mathbf{P}}(m)=6m+3}$}

\author{mario maican}

\address{Mario Maican \\
Institute of Mathematics of the Romanian Academy \\
Calea Grivi\c tei 21 \\
010702 Bucharest \\
Romania}

\email{mario.maican@imar.ro}

\begin{abstract}
We classify all Gieseker semi-stable sheaves on the complex projective plane
that have dimension $1$, multiplicity $6$ and Euler characteristic $3$.
We show that their moduli space is birational to the blow-up at a special point of a certain
moduli space of semi-stable Kronecker modules.
\end{abstract}

\maketitle

\tableofcontents

\noi
{\sc Acknowledgements.} The author was supported by the
Consiliul Na\c tional al Cercet\u arii \c Stiin\c tifice,
an agency of the Romanian Government,
grant PN II--RU 169/2010 PD--219.


\section{Introduction}

\noi
This paper is a continuation of \cite{mult_six_two}, \cite{mult_six_one}, \cite{mult_five}
and \cite{drezet-maican}.
We are concerned with Gieseker semi-stable sheaves on $\P^2 = \P^2(\C)$ having Hilbert
polynomial $\PP(m)=6m+3$ and with their moduli space $\M(6,3)$,
which is an irreducible projective variety of dimension $37$.
We classify all such sheaves using extensions or locally free resolutions of length $1$.
The third column of the table below lists all such sheaves.
The conditions on the morphisms $\f$ define locally closed subsets $W_i$ inside the
vector spaces of homomorphisms of locally free sheaves
and $X_i$ is the image of $W_i$ in $\M(6,3)$ under the map sending $\f$ to the
stable equivalence class of $\Coker(\f)$.
For a better motivation and for a brief history of the problem we refer to the introductory
sections of \cite{drezet-maican} and \cite{mult_five}.

The sets from the first column of the table are locally closed and cover $\M(6,3)$.
We call them \emph{strata}.
They are defined by the cohomological conditions given in column two
and have the codimension prescribed in column four of the table.
The open stratum $X_0 = \M(6,3) \setminus \overline{X}_1$ is isomorphic to an open subset
of the Kronecker moduli space $\N(6,3,3)$ of semi-stable $3 \times 3$-matrices with
entries homogeneous quadratic forms in three variables.
Let $X_{10}$ be the open subset of $X_1$ defined in section 3.
The union $X=X_0 \cup X_{10}$ is an open subset of $\M(6,3)$ and is isomorphic to
an open subset of the blow-up of $\N(6,3,3)$ at the special point represented by the stable matrix
\[
\left[
\ba{c}
X \\ Y \\ Z
\ea
\right] \left[
\ba{ccc}
X & Y & Z
\ea
\right].
\]
Under this isomorphism $X_{10}$ can be identified with an open subset of the exceptional
divisor of the blow-up.
The complement of $X$ has two irreducible components, each of codimension $2$.
The stratum $X_2$ is an open subset of a fibre bundle over $\N(3,3,2) \times \N(3,2,3)$
with fibre $\P^{21}$.
The open subset of $X_3$ given by stable sheaves is isomorphic to an open subset
of a fibre bundle over $\N(3,3,4)$ with fibre $\P^{21}$.
The stratum $X_4$ is an open subset of a tower of bundles with fibre $\P^{21}$
and base a fibre bundle over $\P^5$ with fibre $\P^6$.
The stratum $X_5$ is isomorphic to an open subset of a fibre bundle over
$\Hilb(2) \times \Hilb(2)$ with fibre $\P^{23}$, where $\Hilb(2)$ denotes the Hilbert scheme
of two points in $\P^2$.
We can partially compactify $X_4$ by adding the scheme $\Hilb(2) \times \Hilb(2)$
as a smooth boundary. The union $X_4 \cup X_5$ is isomorphic to an open subset
of the blow-up of this partial compactification along the boundary.
Under this isomorphism $X_5$ is mapped to an open subset of the exceptional
divisor of the blow-up.
The stratum $X_6$ is an open subset of a fibre bundle over $\P^2 \times \P^2$
with fibre $\P^{25}$.
The stratum $X_7$ is closed and consists of all sheaves of the form $\O_C(2)$
for $C \subset \P^2$ a sextic curve. Thus $X_7 \isom \P^{27}$.
The strata $X_0$, $X_1$, $X_3$, $X_3^\D$ contain points given by properly semi-stable
sheaves. The other strata contain only stable sheaves.
The maps $W_i \to X_i$ are geometric quotients away from properly semi-stable points.
The map $W_0 \to X_0$ is a good quotient.

According to \cite{maican-duality}, the duality map
\[
[\F] \longmapsto [{\mathcal Ext}^1(\F, \omega_{\P^2}) \tensor \O_{\P^2}(1)]
\]
defines an automorphism of $\M(6,3)$.
The strata $X_3$ and $X_3^\D$ are isomorphic under the duality automorphism.
All other strata are preserved by the duality automorphism.

Let $C \subset \P^2$ denote an arbitrary smooth sextic curve and let $P_i$ denote
distinct points on $C$.
One of the irreducible components of the complement of $X$ is the closure of the set of sheaves
of the form
\[
\O_C(3)(-P_1-\cdots -P_7 + P_8),
\]
where $P_1, \ldots, P_7$ are not contained
in a conic curve and no four points among them are colinear.
The other component is the dual of the first component, so it is the closure of the set
of sheaves of the form
\[
\O_C(1)(P_1+ \cdots + P_7 - P_8),
\]
where $P_1, \ldots, P_7$ satisfy the same conditions as above.
The generic sheaves in $X_2$ have the form
\[
\O_C(2)(P_1+P_2+P_3-P_4-P_5-P_6),
\]
where $P_1, P_2, P_3$ are non-colinear, $P_4, P_5, P_6$ are non-colinear.
The generic sheaves in $X_3$ and $X_3^\D$ have the form
\[
\O_C(3)(-P_1 - \cdots - P_6), \qquad \text{respectively} \qquad \O_C(1)(P_1 + \cdots + P_6),
\]
where $P_1, \ldots, P_6$ are not contained in a conic curve.
The generic sheaves in $X_4$ are of the form
\[
\O_C(3)(-P_1 - \cdots - P_6),
\]
where $P_1, \ldots, P_6$ lie on a conic curve and no four of them are colinear.
The generic sheaves in $X_5$, respectively $X_6$, have the form
\[
\O_C(2)(P_1+P_2-P_3-P_4), \qquad \text{respectively} \qquad \O_C(2)(P_1-P_2).
\]

\noi \\
{\sc Notations.}
\begin{align*}
\M(r,\chi) =
& \text{ moduli space of Gieseker semi-stable sheaves on $\P^2$} \\
& \text{ with Hilbert polynomial $\PP(m)=rm+\chi$,} \\
\N(m,p,q) =
& \text{ Kronecker moduli space of semi-stable $q \times p$-matrices} \\
& \text{ with entries in a fixed $m$-dimensional vector space over $\C$,} \\
\Hilb(m) =
& \text{ the Hilbert scheme of $m$ points in $\P^2$,} \\
V =
& \text{ a fixed vector space of dimension $3$ over $\C$,} \\
\P^2 =
& \text{ the projective space of lines in $V$,} \\
\{ X, Y, Z \} =
& \text{ basis of $V^*$,} \\
[\F] =
& \text{ the stable-equivalence class of a sheaf $\F$,} \\
\F^\D =
& \text{ ${\mathcal Ext}^1(\F, \omega_{\P^2})$ if $\F$ is a one-dimensional sheaf on $\P^2$,} \\
X^\D =
& \text{ the image of a set $X \subset \M(r,\chi)$ under the duality automorphism,} \\
X^\st =
& \text{ the open subset of points given by stable sheaves inside a set $X$,} \\
\pp(\F) =
& \text{ $\chi/r$, the slope of a sheaf $\F$ having Hilbert polynomial $\PP(m)=rm+\chi$.}
\end{align*}
For any other unexplained notations and conventions we refer to \cite{mult_five} and
especially to the section of preliminaries of \cite{drezet-maican}.

\begin{table}[!hpt]{}
\begin{center}
{\small
\begin{tabular}{|c|c|c|c|}
\hline \hline
{\tiny stratum}
&
\begin{tabular}{c}
{\tiny cohomological} \\
{\tiny conditions}
\end{tabular}
&
$W$
&
{\tiny codim.}
\\
\hline
$X_0$
&
\begin{tabular}{r}
$\h^0(\F(-1))=0$ \\
$\h^1(\F)=0$\\
$\h^0(\F \tensor \Om^1(1))=0$
\end{tabular}
&
\begin{tabular}{c}
{} \\
$0 \lra 3\O(-2) \stackrel{\f}{\lra} 3\O \lra \F \lra 0$ \\
{}
\end{tabular}
&
$0$
\\
\hline
$X_1$
&
\begin{tabular}{r}
$\h^0(\F(-1))=0$ \\
$\h^1(\F)=0$\\
$\h^0(\F \tensor \Om^1(1))=1$
\end{tabular}
&
\begin{tabular}{c}
{} \\
$0 \lra 3\O(-2) \oplus \O(-1) \stackrel{\f}{\lra} \O(-1) \oplus 3\O \lra \F \lra 0$ \\
$\f_{12}=0$ \\
$\f$ is not equivalent to a morphism of any of the forms \\
${\ds
\left[
\ba{cccc}
\star & 0 & 0 & 0 \\
\star & \star & \star & \star \\
\star & \star & \star & \star \\
\star & \star & \star & \star
\ea
\right], \quad \left[
\ba{cccc}
\star & \star & 0 & 0 \\
\star & \star & 0 & 0 \\
\star & \star & \star & \star \\
\star & \star & \star & \star
\ea
\right], \quad \left[
\ba{cccc}
\star & \star & \star & 0 \\
\star & \star & \star & 0 \\
\star & \star & \star & 0 \\
\star & \star & \star & \star
\ea
\right]
}$ \\
{}
\end{tabular}
&
$1$
\\
\hline
$X_2$
&
\begin{tabular}{r}
$\h^0(\F(-1))=0$ \\
$\h^1(\F)=0$\\
$\h^0(\F \tensor \Om^1(1))=2$
\end{tabular}
&
\begin{tabular}{c}
{} \\
$0 \lra 3\O(-2) \oplus 2\O(-1) \stackrel{\f}{\lra} 2\O(-1) \oplus 3\O \lra \F \lra 0$ \\
$\f_{12}=0$ \\
$\f_{11}$ and $\f_{22}$ are semi-stable as Kronecker modules \\
{}
\end{tabular}
&
$4$
\\
\hline
$X_3$
&
\begin{tabular}{r}
$\h^0(\F(-1))=0$ \\
$\h^1(\F)=1$\\
$\h^0(\F \tensor \Om^1(1))=3$
\end{tabular}
&
\begin{tabular}{c}
{} \\
$0 \lra \O(-3) \oplus 3\O(-1) \stackrel{\f}{\lra} 4\O \lra \F \lra 0$ \\
$\f_{12}$ is semi-stable as a Kronecker module \\
{}
\end{tabular}
&
$4$
\\
\hline
${\ds X_3^\D}$
&
\begin{tabular}{r}
$\h^0(\F(-1))=1$ \\
$\h^1(\F)=0$\\
$\h^0(\F \tensor \Om^1(1))=3$
\end{tabular}
&
\begin{tabular}{c}
{} \\
$0 \lra 4\O(-2) \stackrel{\f}{\lra} 3\O(-1) \oplus \O(1) \lra \F \lra 0$ \\
$\f_{11}$ is semi-stable as a Kronecker module \\
{}
\end{tabular}
&
$4$
\\
\hline
$X_4$
&
\begin{tabular}{r}
$\h^0(\F(-1))=1$ \\
$\h^1(\F)=1$\\
$\h^0(\F \tensor \Om^1(1))=3$
\end{tabular}
&
\begin{tabular}{c}
{} \\
$0 \lra \O(-3) \oplus \O(-2) \stackrel{\f}{\lra} \O \oplus \O(1) \lra \F \lra 0$ \\
$\f_{12} \neq 0$, $\f_{12} \nmid \f_{11}$, $\f_{12} \nmid \f_{22}$ \\
{}
\end{tabular}
&
$5$
\\
\hline
$X_5$
&
\begin{tabular}{r}
$\h^0(\F(-1))=1$ \\
$\h^1(\F)=1$ \\
$\h^0(\F \tensor \Om^1(1))=4$
\end{tabular}
&
\begin{tabular}{c}
{} \\
$0 \to \O(-3) \oplus \O(-2) \oplus \O(-1) \stackrel{\f}{\to} \O(-1) \oplus \O \oplus \O(1) \to \F \to 0$ \\
$\f_{13}=0$, $\f_{12} \neq 0$, $\f_{23} \neq 0$, $\f_{12} \nmid \f_{11}$, $\f_{23} \nmid \f_{33}$ \\
{}
\end{tabular}
&
$6$
\\
\hline
$X_6$
&
\begin{tabular}{r}
$\h^0(\F(-1))=2$ \\
$\h^1(\F)=2$ \\
$\h^0(\F \tensor \Om^1(1))=6$
\end{tabular}
&
\begin{tabular}{c}
{} \\
$0 \lra 2\O(-3) \oplus \O \stackrel{\f}{\lra} \O(-2) \oplus 2\O(1) \lra \F \lra 0$ \\
$\f_{11}$ has linearly independent entries \\
$\f_{22}$ has linearly independent entries \\
{}
\end{tabular}
&
$8$
\\
\hline
$X_7$
&
\begin{tabular}{r}
$\h^0(\F(-1))=3$ \\
$\h^1(\F)=3$ \\
$\h^0(\F \tensor \Om^1(1))=8$
\end{tabular}
&
\begin{tabular}{c}
{} \\
$0 \lra \O(-4) \stackrel{\f}{\lra} \O(2) \lra \F \lra 0$ \\
{}
\end{tabular}
&
$10$
\\
\hline \hline
\end{tabular}
}
\end{center}
\end{table}


\section{The open stratum}

\noi
From \cite{maican}, 4.6, 4.7 and 4.8, we extract the following:

\begin{prop}
\label{2.1}
Let $\F$ be a sheaf giving a point in $\M(6,3)$ and satisfying the conditions
$\h^0(\F(-1))=0$, $\h^1(\F)=0$. Then $\h^0(\F \tensor \Om^1(1))=0$, $1$ or $2$.
The sheaves in the first case are precisely the sheaves with resolution of the form
\[
\tag{i}
0 \lra 3\O(-2) \lra 3\O \lra \F \lra 0.
\]
The sheaves in the second case are precisely the sheaves having resolution of the form
\[
\tag{ii}
0 \lra 3\O(-2) \oplus \O(-1) \stackrel{\f}{\lra} \O(-1) \oplus 3\O \lra \F \lra 0,
\]
such that $\f_{12}=0$, the entries of $\f_{11}$ span a subspace of $V^*$ of dimension
at least $2$, ditto for the entries of $\f_{22}$ and, moreover, $\f$ is not equivalent, modulo the
action of the natural group of automorphisms, to a morphism represented by a matrix of the form
\[
\left[
\ba{cccc}
\star & \star & 0 & 0 \\
\star & \star & 0 & 0 \\
\star & \star & \star & \star \\
\star & \star & \star & \star
\ea
\right].
\]
The sheaves in the third case are precisely the sheaves having a resolution of the form
\[
\tag{iii}
0 \lra 3\O(-2) \oplus 2\O(-1) \stackrel{\f}{\lra} 2\O(-1) \oplus 3\O \lra \F \lra 0,
\]
where $\f_{12}=0$, $\f_{11}$ has linearly independent maximal minors and ditto for $\f_{22}$.
\end{prop}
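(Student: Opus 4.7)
My plan is to apply the Beilinson spectral sequence to $\F$ and then invoke semi-stability to bound $k := \h^0(\F \tensor \Om^1(1))$ and to exclude degenerate forms of the resulting monad.

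First I would fill in the remaining cohomology of $\F$ from the given hypotheses. Since $\F$ is one-dimensional, $\h^2$ of any locally free twist vanishes. From $\chi(\F)=3$ and $\h^1(\F)=0$ one gets $\h^0(\F)=3$; from $\chi(\F(-1))=-3$ and $\h^0(\F(-1))=0$ one gets $\h^1(\F(-1))=3$. Tensoring the Euler sequence $0 \to \Om^1(1) \to V \tensor \O \to \O(1) \to 0$ with $\F$ and taking the cohomology sequence forces $\chi(\F \tensor \Om^1(1))=0$, so $\h^1(\F \tensor \Om^1(1))=k$ as well. Feeding these dimensions into the $E_1$-page of the Beilinson spectral sequence, the only surviving contributions are $3\O(-2)$ at bidegree $(-2,1)$, the two copies of $k\O(-1)$ at $(-1,0)$ and $(-1,1)$, and $3\O$ at $(0,0)$. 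Convergence to $\F$ in total degree zero yields the locally free resolution
\[
0 \lra 3\O(-2) \oplus k\O(-1) \stackrel{\f}{\lra} k\O(-1) \oplus 3\O \lra \F \lra 0.
\]
The two copies of $k\O(-1)$ sit at different bidegrees and are not joined by any surviving differential, so the block $\f_{12}$ vanishes automatically.

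To bound $k$, I exploit that $\f_{12}=0$ turns $[\,k\O(-1) \stackrel{\f_{22}}{\lra} 3\O\,]$ into a subcomplex of the resolution, with quotient complex $[\,3\O(-2) \stackrel{\f_{11}}{\lra} k\O(-1)\,]$. The attached long exact cohomology sequence forces $\f_{22}$ to be injective as a sheaf map, which immediately rules out $k \geq 4$ for rank reasons. For $k=3$, provided $\ker(\f_{11})=0$, the cokernel $\Coker(\f_{22})$ embeds in $\F$ as a subsheaf with Hilbert polynomial $3m+3$ and slope $1>1/2$, contradicting semi-stability of $\F$. The degenerate case $\ker(\f_{11}) \neq 0$ at $k=3$ requires a parallel destabilising argument, tracing a sub-line-bundle of $3\O(-2)$ contained in $\ker(\f_{11})$ through the long exact sequence to produce a subsheaf of slope exceeding $1/2$; handling this case uniformly is the most delicate point of the proof.

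Finally, for each $k \in \{0,1,2\}$ the extra conditions on $\f$ in (ii) and (iii) are forced by semi-stability as well. In (ii), if the entries of $\f_{11}$ (or dually $\f_{22}$) spanned only a line in $V^*$, a change of basis would split off a subcomplex $\O(-2) \to \O(-1)$ and reduce the resolution to type (i), so that $k$ would drop to $0$. The excluded block form in (ii) corresponds to a subcomplex $\O(-2) \oplus \O(-1) \to 2\O$ whose cokernel has Hilbert polynomial $3m+2$ and slope $2/3>1/2$, once again violating semi-stability. In (iii) the linear independence of the maximal minors of $\f_{11}$ and $\f_{22}$ is precisely the Kronecker-module semi-stability condition for the blocks $3\O(-2) \to 2\O(-1)$ and $2\O(-1) \to 3\O$, and it prevents block reductions that would decrease $k$. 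In each of the three cases the converse is obtained by feeding the stated resolution back through the Euler sequence to recompute $\h^0(\F \tensor \Om^1(1))$ and verify that the conditions force exactly the declared value of $k$.
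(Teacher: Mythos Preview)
The paper does not prove this proposition at all: it simply quotes the result from \cite{maican}, 4.6--4.8. So there is no ``paper's own proof'' to compare against, and your outline via the Beilinson monad plus semi-stability is exactly the kind of argument one expects to find in that reference.

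That said, two steps in your sketch are either wrong or unnecessarily fragile. First, the exclusion of $k=3$ is much simpler than you make it. Since $\f_{12}=0$, the sheaf $\F$ always surjects onto $\Coker(\f_{11})$; because $\F$ is one-dimensional this forces $\f_{11}\colon 3\O(-2)\to 3\O(-1)$ to be injective (otherwise the cokernel would have two-dimensional support), and then $\Coker(\f_{11})$ has Hilbert polynomial $3m$ and slope $0<\tfrac12$, a destabilising quotient. Your detour through $\Coker(\f_{22})$ as a subsheaf and the ``delicate'' degenerate case $\ker(\f_{11})\neq 0$ is unnecessary: that degenerate case simply does not occur. Second, your justification for the condition that the entries of $\f_{11}$ span at least a two-dimensional subspace in case~(ii) is incorrect: nothing ``reduces the resolution to type~(i)'' or changes $k$, which is an invariant of $\F$. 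The correct argument is again via the quotient $\Coker(\f_{11})$: if the entries span only a line, $\Coker(\f_{11})\simeq\O_L(-1)$ for a line $L$, and this has slope $0<\tfrac12$, contradicting semi-stability. The same mechanism (destabilising quotient or subsheaf from a block reduction of $\f$) handles the excluded $2\times 2$ block in~(ii) and the Kronecker semi-stability of $\f_{11},\f_{22}$ in~(iii).

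Finally, your converse only recomputes $k$; you do not address why a cokernel of a morphism satisfying the stated conditions in (ii) or (iii) is actually semi-stable. That is part of the ``precisely'' and is the more laborious half of the argument in \cite{maican}.
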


\noi
Let $\W= \Hom(3\O(-2) \oplus 2\O(-1), 2\O(-1) \oplus 3\O)$ and let $W \subset \W$
be the open subset of injective morphisms with semi-stable cokernel.
Let $W^\st \subset W$ be the open subset of morphisms with stable cokernel.
The group
\[
G = (\Aut(3\O(-2) \oplus 2\O(-1)) \times \Aut(2\O(-1) \oplus 3\O))/\C^*
\]
acts on $\W$ by conjugation. Let $X, X^\st \subset \M(6,3)$ be the images of $W, W^\st$
under the canonical map $\f \mapsto [\Coker(\f)]$.
Note that $X$ is the open dense subset given by the conditions $\h^0(\F(-1))=0$, $\h^1(\F)=0$.

\begin{prop}
\label{2.2}
The canonical map $\rho \colon W \to X$, $\rho(\f)= [\Coker(\f)]$, is a categorical quotient map
for the action of $G$ on $W$.
The restricted map $W^\st \to X^\st$ is a geometric quotient.
\end{prop}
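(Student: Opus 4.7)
The plan is as follows. First, we verify that $\rho$ is well-defined and $G$-invariant. By Proposition~\ref{2.1}, the cokernel of every morphism in $W$ is a semi-stable sheaf with Hilbert polynomial $\PP(m) = 6m+3$ satisfying the conditions $\h^0(\F(-1))=0$ and $\h^1(\F)=0$, so $\rho(\f) \in X$. Conversely, every sheaf representing a point of $X$ arises as the cokernel of some $\f \in W$: for sheaves of type (iii) in Proposition~\ref{2.1} this is immediate, while sheaves of type (i) or (ii) are realized after augmenting their resolutions by identity maps on the missing $\O(-1)$ summands. Hence $\rho$ is surjective. The $G$-action amounts to simultaneous change of basis in source and target of $\f$, which does not affect $[\Coker(\f)]$, so $\rho$ is $G$-invariant.

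Next we identify the fibers of $\rho$ with $G$-orbits. If $\f, \f' \in W$ have isomorphic cokernels $\F \simeq \F'$, then, because all $\Ext^1$ groups between line bundles on $\P^2$ vanish, any isomorphism $\F \to \F'$ lifts to an isomorphism of their two-term locally free resolutions. Such a lift is precisely the data of an element of $\Aut(3\O(-2) \oplus 2\O(-1)) \times \Aut(2\O(-1) \oplus 3\O)$ carrying $\f$ to $\f'$, and since $G$ is the quotient of this product by its diagonal $\C^*$, the fibers of $\rho$ are exactly the $G$-orbits on $W$.

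For the categorical quotient structure, we follow the standard approach from the preliminaries of \cite{drezet-maican}. The moduli space $\M(6,3)$ is itself a good GIT quotient of (an open subset of) a suitable Quot scheme by a linear group action, and $\rho$ factors through this Quot scheme. The induced map from $W$ to its image in the Quot scheme is a principal fibration with structure group $\Aut(3\O(-2) \oplus 2\O(-1))/\C^*$, and composition with the good quotient onto $\M(6,3)$ exhibits $\rho$ as a categorical quotient. On the stable locus, the only $G$-stabilizer is trivial (as $\Aut(\F) = \C^*$ for stable $\F$, already quotiented out of $G$), the $G$-orbits in $W^\st$ are closed, and $\rho$ separates them, yielding the geometric quotient $W^\st \to X^\st$.

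The main obstacle I anticipate is the categorical quotient claim on the full $W$ in the presence of properly semi-stable cokernels: the $G$-orbits of such $\f$ are not closed in $W$, and distinct orbits mapping to the same point of $X$ must have intersecting closures matching the S-equivalence relation. Concretely, one must verify that the closure of the $G$-orbit of each $\f$ contains a block-diagonal morphism whose cokernel realizes the associated graded of the Jordan-H\"older filtration of $\Coker(\f)$. This compatibility with S-equivalence is the technical heart of the argument and typically requires an appeal to general GIT or Luna's slice theorem.
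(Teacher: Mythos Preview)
Your second paragraph overstates: the fibres of $\rho$ are \emph{not} $G$-orbits on all of $W$, since $\rho$ takes values in stable-equivalence classes. What you actually prove there is the weaker (and correct) statement that $\Coker(\f)\simeq\Coker(\f')$ iff $\f'\in G\f$. You recognise the discrepancy in your final paragraph, but you leave it as an acknowledged gap rather than closing it. The paper closes it by showing, as in 4.2.1 of \cite{drezet-maican}, that $\rho(\f_1)=\rho(\f_2)$ if and only if $\overline{G\f_1}\cap\overline{G\f_2}\neq\emptyset$; this is precisely the compatibility of $G$-orbit closures with S-equivalence that a categorical quotient requires.

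More importantly, your route to the categorical-quotient property via a Quot-scheme factorisation is too vague to stand. The Quot scheme underlying the GIT construction of $\M(6,3)$ is a Quot of a large free sheaf $N\O(-m)$, not of $2\O(-1)\oplus 3\O$, and you do not explain how to connect the two or why the composite has the universal property. The paper takes a different and sharper route: following the template of 3.1.6 in \cite{drezet-maican}, it reduces the categorical-quotient property to producing, for any $[\F]\in X$, a morphism $\f\in W$ with $\Coker(\f)\simeq\F$ by \emph{algebraic operations on the Beilinson spectral sequence} of $\F(1)$. Concretely, the $\EE^1$-page collapses (using $\h^0(\F(-1))=\h^1(\F)=0$) to give a resolution $0\to 3\O(-1)\oplus 3\Om^1(1)\to 9\O\to\F(1)\to 0$; the Euler sequence, a rank estimate on $\psi_{12}$, cancellation of $7\O$, and a twist by $\O(-1)$ then land in $W$. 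Because this construction is functorial in flat families, it furnishes local sections of $\rho$ up to the $G$-action, which is exactly what one needs to factor an arbitrary $G$-invariant morphism $W\to T$ through $X$. This Beilinson step is the engine missing from your argument.

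Your treatment of the geometric quotient on $W^\st$ is essentially the paper's: fibres are $G$-orbits, $X^\st$ is smooth, and one invokes the standard criterion (the paper cites \cite{popov-vinberg}, theorem~4.2).
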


\begin{proof}
As at 4.2.1 \cite{drezet-maican}, we can easily show that $\rho(\f_1) = \rho(\f_2)$ if and only if
$\overline{G \f_1} \cap \overline{G \f_2} \neq \emptyset$.
As at 3.1.6 op.cit., we reduce the problem to the problem of constructing $\f$ in the fibre of $[\F]$
by starting with the Beilinson spectral sequence. We will use the Beilinson spectral sequence I
that converges to $\F(1)$. Its $\EE^1$-term has display diagram
\[
\xymatrix
{
\H^1(\F(-1)) \tensor \O(-1) \ar[r]^-{\f_1} & \H^1(\F) \tensor \Om^1(1) \ar[r]^-{\f_2} & \H^1(\F(1)) \tensor \O \\
\H^0(\F(-1)) \tensor \O(-1) \ar[r]^-{\f_3} & \H^0(\F) \tensor \Om^1(1) \ar[r]^-{\f_4} & \H^0(\F(1)) \tensor \O
}.
\]
By hypothesis some of the above cohomology groups vanish, so the display diagram takes the form
\[
\xymatrix
{
3\O(-1) & 0 & 0 \\
0 & 3\Om^1(1) \ar[r]^-{\f_4} & 9\O
}.
\]
The second term of the spectral sequence has display diagram
\[
\xymatrix
{
3\O(-1) \ar[rrd]^-{\f_5} & 0 & 0 \\
0 & \Ker(\f_4) & \Coker(\f_4)
}.
\]
Thus $\EE^3 = \EE^{\infty}$ hence $\F(1) \isom \Coker(\f_5)$ and $\f_4$, $\f_5$ are injective.
Clearly $\f_5$ factors through $9\O$, so we arrive at a resolution
\[
0 \lra 3\O(-1) \oplus 3\Om^1(1) \lra 9\O \lra \F(1) \lra 0.
\]
Using the Euler sequence the above leads to the resolution
\[
0 \lra 3\O(-1) \oplus 9\O \stackrel{\psi}{\lra} 9\O \oplus 3\O(1) \lra \F \lra 0.
\]
The rank of $\psi_{12}$ is at least $7$, otherwise $\F(1)$ would map surjectively
onto the cokernel of a morphism $3\O(-1) \to 3\O$, in violation of semi-stability.
Canceling $7\O$ and tensoring with $\O(-1)$ we obtain $\f \in W$ such that $\F \isom \Coker(\f)$.

Thus far we have proved that $W \to X$ is a categorical quotient map, so the same is true
for the restricted map $W^\st \to X^\st$. The fibres of the map $W^\st \to X^\st$ are precisely
the $G$-orbits and $X^\st$ is smooth, so we can apply
\cite{popov-vinberg}, theorem 4.2,
to conclude that the map $W^\st \to X^\st$ is a geometric quotient.
\end{proof}

\noi
Let $\W_0 = \Hom(3\O(-2), 3\O)$ and let $W_0 \subset \W_0$ be the set of injective morphisms.
Let
\[
G_0 = (\Aut(3\O(-2)) \times \Aut(3\O))/\C^* = (\GL(3,\C) \times \GL(3,\C))/\C^*
\]
be the natural group acting by conjugation on $\W_0$.
Let $X_0 \subset \M(6,3)$ be the open dense subset
of stable-equivalence classes of sheaves $\F$ as in \ref{2.1}(i).

\begin{prop}
\label{2.3}
There exists a good quotient $W_0/\!/G_0$, which is a proper open subset of $\N(6,3,3)$.
Moreover, $W_0/\!/G_0$ is isomorphic to $X_0$, hence $\M(6,3)$ and $\N(6,3,3)$ are birational.
\end{prop}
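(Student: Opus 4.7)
The plan is to realize $\N(6,3,3)$ as a GIT quotient and exhibit $W_0$ as an open saturated subset. Under the identification $\Hom(\O(-2), \O) = S^2 V^*$, the space $\W_0$ is the affine space of $3 \times 3$ Kronecker matrices with entries in the six-dimensional space $S^2 V^*$, and the $G_0$-action coincides with the standard Kronecker action. Let $\W_0^{\ss} \subset \W_0$ denote the semi-stable locus for the natural $G_0$-linearization producing $\N(6,3,3) = \W_0^{\ss} /\!/ G_0$.

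First I would establish the equality $W_0 = \W_0^{\ss} \cap \{\det \f \neq 0\}$. If $\f$ is injective but not Kronecker semi-stable, then $\f$ is $G_0$-equivalent to a matrix with a zero block of shape violating the King inequality, and such a block decomposition yields either a proper subsheaf of $\Coker(\f)$ of slope strictly greater than $1/2$ or a quotient sheaf of slope strictly less than $1/2$, contradicting Gieseker semi-stability. Conversely, any Kronecker semi-stable injective $\f$ has Gieseker semi-stable cokernel, by the standard slope argument from \cite{drezet-maican}.

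Next, $\det \f$ is a $G_0$-semi-invariant, so its vanishing locus descends to a proper closed subvariety $Z \subset \N(6,3,3)$ and $W_0$ is the preimage of the open complement $\N(6,3,3) \setminus Z$ under $\pi \colon \W_0^{\ss} \to \N(6,3,3)$. In particular $W_0$ is a saturated $G_0$-invariant open subset of $\W_0^{\ss}$, hence the restriction of $\pi$ is a good quotient identifying $W_0 /\!/ G_0$ with the proper open subset $\N(6,3,3) \setminus Z$. Properness (nonemptyness of $Z$) is witnessed by explicit Kronecker semi-stable $3 \times 3$ matrices of vanishing determinant, for instance suitable block-diagonal ones.

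Finally, to identify $W_0 /\!/ G_0$ with $X_0$, I would consider the $G_0$-invariant map $\rho_0 \colon W_0 \to X_0$, $\f \mapsto [\Coker(\f)]$, which is surjective by Proposition \ref{2.1}(i). Exactly as in the proof of Proposition \ref{2.2}, the Beilinson spectral sequence applied to sheaves in case (i) allows one to reconstruct $\f$ from $\F$ in a flat family, so that $\rho_0$ is in fact a categorical quotient of $W_0$ by $G_0$. Since the good quotient $W_0 \to W_0 /\!/ G_0$ is also categorical, the universal property yields a canonical isomorphism $W_0 /\!/ G_0 \isom X_0$. Birationality of $\M(6,3)$ and $\N(6,3,3)$ is then immediate, because $X_0$ is the dense open stratum of $\M(6,3)$ and is isomorphic to a nonempty open subset of $\N(6,3,3)$. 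The most delicate step is the flat-family Beilinson reconstruction needed to promote $\rho_0$ from a set-theoretic to a categorical quotient, carried out precisely as in Proposition \ref{2.2}.
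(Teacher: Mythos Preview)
Your overall approach matches the paper's: realise $\N(6,3,3)$ as the GIT quotient $\W_0^{\ss}/\!/G_0$, exhibit $W_0$ as a saturated open subset (you make this concrete via the semi-invariant $\det\f$, which the paper leaves implicit), and then identify the resulting good quotient with $X_0$ by invoking the categorical-quotient argument of Proposition~\ref{2.2} together with uniqueness.

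There is, however, a circular step in your proof of the inclusion $W_0\subset\W_0^{\ss}$. You suppose $\f$ is injective but not Kronecker semi-stable, produce from a King-violating zero block a sub- or quotient sheaf of $\Coker(\f)$ of bad slope, and conclude by ``contradicting Gieseker semi-stability''. But nothing so far tells you $\Coker(\f)$ is Gieseker semi-stable; your ``converse'' statement only yields this under the extra hypothesis that $\f$ is already Kronecker semi-stable, which is exactly what you are trying to prove. (Indeed the premise is vacuous: a King-violating block in a $3\times3$ matrix forces $\det\f=0$, so no injective $\f$ can fail Kronecker semi-stability in the first place, and there is no $1$-dimensional cokernel to speak of.) The paper's argument is the direct one: each of the three forbidden forms in King's criterion---a zero row, a zero column, or a $2\times2$ zero block---visibly has vanishing determinant, hence is not injective. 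This immediately gives $\{\det\f\neq0\}\subset\W_0^{\ss}$ with no appeal to Gieseker stability. With this fix your proof goes through and agrees with the paper's.
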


\begin{proof}
Let $\W_0^\ss \subset \W_0$ be the subset of morphisms that are semi-stable for the action of $G_0$.
The good quotient $\W_0^\ss/\!/G_0$ constructed using geometric invariant theory is the Kronecker
moduli space $\N(6,3,3)$. According to King's criterion of semi-stability \cite{king},
a morphism $\f \in \W_0$ is semi-stable if and only if it is not equivalent to a morphism having
one of the following forms:
\[
\left[
\ba{ccc}
0 & 0 & 0 \\
\star & \star & \star \\
\star & \star & \star
\ea
\right], \qquad \qquad \left[
\ba{ccc}
0 & 0 & \star \\
0 & 0 & \star \\
\star & \star & \star
\ea
\right], \qquad \qquad \left[
\ba{ccc}
0 & \star & \star \\
0 & \star & \star \\
0 & \star & \star
\ea
\right].
\]
Thus injective morphisms are semi-stable. In fact, it is easy to see that $W_0$
is the preimage in $\W_0^\ss$ under the quotient map of a proper open subset
inside $\W_0^\ss/\!/G_0$. This open subset is the good quotient $W_0/\!/G_0$.
The argument at \ref{2.2} above shows that the canonical map $W_0 \to X_0$
is a categorical quotient map. The isomorphism $X_0 \isom W_0/\!/G_0$
follows from the uniqueness of the categorical quotient.
\end{proof}


\section{The codimension $1$ stratum}

\noi
Let $\W_1 = \Hom(3\O(-2) \oplus \O(-1), \O(-1) \oplus 3\O)$ and let $W_1 \subset \W_1$
be the set of morphisms $\f$ from 2.1(ii). Let $W_1^\st \subset W_1$ be the open subset
of morphisms having stable cokernel. Let
\[
G_1 = (\Aut(3\O(-2) \oplus \O(-1)) \times \Aut(\O(-1) \oplus 3\O))/\C^*
\]
be the natural group acting by conjugation on $\W_1$. Let $W_{10} \subset W_1$
be the set of morphisms $\f$ such that the entries of $\f_{11}$ span $V^*$ and ditto
for the entries of $\f_{22}$. Let $W_{11} \subset W_1$ be the subset given by the condition
that the entries of $\f_{11}$ span a two-dimensional vector subspace of $V^*$.
We denote by $X_1$, $X_1^\st$, $X_{10}$, $X_{11}$
the images of $W_1$, $W_1^\st$, $W_{10}$, $W_{11}$ in $\M(6,3)$.
Clearly $X_1 \setminus X_{10} = X_{11} \cup X_{11}^\D$.
All sheaves giving points in $X_{10}$ are stable.

\begin{prop}
\label{3.1}
There exists a geometric quotient of $W_{10}$ by $G_1$, which is a proper open subset of $\P^{36}$.
Moreover, $W_{10}/G_1$ is isomorphic to $X_{10}$.
In particular, $X_1$ is irreducible and has codimension $1$.

The canonical map $W_1^\st \to X_1^\st$ is a geometric quotient for the action of $G_1$.
Thus $X_{10}$ is an open subset of $X_1$.
\end{prop}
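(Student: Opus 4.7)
The plan is to exhibit $W_{10}/G_1$ explicitly as a quotient of a $54$-dimensional affine space by an affine-linear action, and then identify this quotient with a proper open subset of $\P^{36}$. Since $\f \in W_{10}$ has entries of $\f_{11}$ spanning $V^*$, the $\GL(3,\C) \times \C^*$ pieces of $G_1$ acting on the source $3\O(-2)$ and on the target $\O(-1)$ transport $\f_{11}$ to $[X,Y,Z]$; symmetrically, $\f_{22}$ is transported to $[X,Y,Z]^T$. Thus every $G_1$-orbit in $W_{10}$ meets the slice
\[
S = \{ \f \in W_{10} : \f_{11} = [X,Y,Z],\ \f_{22} = [X,Y,Z]^T \},
\]
which is an open subvariety of $\Hom(3\O(-2), 3\O) \cong \mathrm{Mat}_3(S^2 V^*) \cong \C^{54}$, parametrized by $\f_{21}$.

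Next I would compute the subgroup $H \subset \Aut(3\O(-2) \oplus \O(-1)) \times \Aut(\O(-1) \oplus 3\O)$ stabilizing $(\f_{11},\f_{22})$ and its action on $\f_{21}$. A direct calculation of $g_2 \f g_1^{-1}$ shows that $H$ acts on $\f_{21}$ by
\[
\f_{21} \longmapsto t\,\f_{21} + L[X,Y,Z] - [X,Y,Z]^T L',
\]
where $t \in \C^*$ and $L, L' \in V^{*3}$. The shift map $(L,L') \mapsto L[X,Y,Z] - [X,Y,Z]^T L'$ from $V^{*3} \oplus V^{*3}$ into $\mathrm{Mat}_3(S^2 V^*)$ is easily checked to have a $1$-dimensional kernel parametrized by $L = \alpha[X,Y,Z]^T$, $L' = \alpha[X,Y,Z]$, so its image $U$ has dimension $17$. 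The resulting semi-direct action of $\C^* \ltimes U$ on $\C^{54}$ admits a geometric quotient in the obvious way: divide first by translations to obtain $\C^{54}/U \cong \C^{37}$, then by the $\C^*$-scaling on the nonzero locus to obtain $\P^{36}$. The $\f_{21}$ lying in $U$ correspond precisely to morphisms that become block-diagonal after a change of basis, hence fail injectivity and lie outside $W_{10}$; consequently the image of $S$ in $\P^{36}$ is a proper open subset, coinciding with $W_{10}/G_1$.

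The identification $W_{10}/G_1 \cong X_{10}$ then follows from the fact that two morphisms in $W_{10}$ have isomorphic (equivalently, stable-equivalence equal, since $X_{10}$ consists of stable sheaves) cokernels iff they are $G_1$-equivalent, which is a consequence of the uniqueness of the resolution in 2.1(ii) obtained by adapting the Beilinson spectral sequence argument of 2.2 to the smaller resolution. From this I get $\dim X_{10} = 36$, so $X_1$ has codimension $1$ in the $37$-dimensional $\M(6,3)$; irreducibility of $X_1$ then follows once $X_{11}$ and $X_{11}^\D$ are seen to have dimension strictly less than $36$, a routine codimension count using that the condition ``entries of $\f_{11}$ span a $2$-dimensional subspace'' cuts out a codimension-$1$ subvariety of the locus of linear rows. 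The geometric quotient statement for $W_1^\st \to X_1^\st$ is proved by an argument parallel to 2.2 (categorical quotient from Beilinson, geometric by Popov--Vinberg since $X_1^\st$ is smooth and orbits equal fibers), and openness of $X_{10}$ in $X_1$ follows because $W_{10} \subset W_1^\st$ is open and geometric quotient maps are open.

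The main obstacle, in my view, will be verifying that the closed locus removed from $\P^{36}$ matches precisely the loci excluded on the sheaf side---pinning down exactly which orbits violate injectivity or fall into one of the bad matrix shapes of 2.1(ii), and confirming that the resulting open subvariety of $\P^{36}$ is identifiable with an open subset of the exceptional divisor of the blow-up of $\N(6,3,3)$ at $[X,Y,Z]^T[X,Y,Z]$, as announced in the introduction.
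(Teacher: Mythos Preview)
Your approach is essentially the same as the paper's: the paper defines $\Sigma = \{\f : \f_{21} = v\f_{11} + \f_{22}u\}$ (your subspace $U$, viewed on the slice) and invokes 2.2.2 of \cite{mult_five} to identify $(W_1' \setminus \Sigma)/G_1$ with a $\P^{36}$-bundle over $\N(3,3,1) \times \N(3,1,3) = \{\text{point}\}$, which is precisely your slice-and-stabilizer computation unwound. One minor point: for the Popov--Vinberg step the paper only asserts that $X_1^\st$ is \emph{normal} (as a categorical quotient of a normal variety, via \cite{mumford}, remark (2), p.~5) rather than smooth, and normality is all that is needed there.
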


\begin{proof}
Let $W_1' \subset \W_1$ be the subset of morphisms satisfying the conditions defining $W_{10}$
except injectivity. Let $\Sigma \subset W_1'$ be the $G_1$-invariant subset given by the condition
\[
\f_{21} = v \f_{11} + \f_{22} u, \qquad u \in \Hom(3\O(-2), \O(-1)), \qquad v \in \Hom(\O(-1),3\O).
\]
Clearly $W_{10}$ is a proper open $G_1$-invariant subset of $W_1' \setminus \Sigma$.
As at 2.2.2 \cite{mult_five}, it can be shown that there exists a geometric quotient
of $W_1' \setminus \Sigma$ modulo $G_1$, which is a fibre bundle over $\N(3,3,1) \times \N(3,1,3)$
with fibre $\P^{36}$. The base is a point.
Thus $W_{10}/G_1$ exists and is a proper open subset of $(W_1' \setminus \Sigma)/G_1 \isom \P^{36}$.
The argument at \ref{2.2} above shows that the map $W_1 \to X_1$ is a categorical quotient map.
Thus $W_{10} \to X_{10}$ is also a categorical quotient map, so $W_{10}/G_1 \isom X_{10}$.

According to \cite{mumford}, remark (2), p. 5, $X_1$ is normal.
The map $W_1^\st \to X_1^\st$ is a categorical quotient, its fibres are precisely the
$G_1$-orbits and $X_1^\st$ is normal. Thus we can apply
\cite{popov-vinberg}, theorem 4.2,
to conclude that the map $W_1^\st \to X_1^\st$ is a geometric quotient.
Clearly $W_{10}$ is an open $G_1$-invariant subset of $W_1^\st$,
hence $X_{10}$ is an open subset of $X_1^\st$, which is an open subset of $X_1$.
\end{proof}

\begin{prop}
\label{3.2}
The sheaves giving points in $X_{11}$ are either non-split extension sheaves of the form
\[
0 \lra \E \lra \F \lra \C_x \lra 0,
\]
where $\C_x$ is the structure sheaf of a closed point $x \in \P^2$ and $\E$ gives a point in the stratum
$X_2$ of $\M(6,2)$ (cf. 3.3 \cite{mult_six_two}), or they are extension sheaves of the form
\[
0 \lra \O_C \lra \F \lra \G \lra 0,
\]
where $C \subset \P^2$ is a conic curve and $\G$ gives a point in the stratum $X_1$ of $\M(4,2)$
(cf. 4.2.3 \cite{drezet-maican}). Conversely, any such extensions give points in $X_{11}$.

The generic sheaves in $X_{11}$ are of the form $\O_S(3)(-P_1- \cdots - P_7 + P_8)$,
where $S \subset \P^2$ is a smooth sextic curve, $P_i$ are eight distinct points on $S$,
$P_1, \ldots, P_7$ are not contained in a conic curve and no four points among them are colinear.
\end{prop}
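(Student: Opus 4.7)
The strategy is to read off the extension of form~(a) directly from the resolution~\ref{2.1}(ii), do a case split according to whether the resulting kernel is semistable, then reverse the construction for the converse and use a dimension count to pin down the generic form.

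First I normalize the resolution: given $\F \in X_{11}$, the two-dimensional span hypothesis on $\f_{11}$ permits a column operation on $3\O(-2)$ to bring $\f_{11}$ into the form $(0, l_1, l_2)$ with $l_1, l_2 \in V^*$ linearly independent, and I set $x = V(l_1, l_2) \in \P^2$. I then split the complex $\f$ horizontally: the first column of $\f$ gives a subcomplex $A \colon \O(-2) \to 3\O$ (it lies in $3\O$ since the first entry of $\f_{11}$ is zero and $\f_{12}=0$) and the remaining three columns give the quotient complex $B \colon 2\O(-2) \oplus \O(-1) \to \O(-1)$ with map $(l_1, l_2, 0)$. Standard computations give $\Coker(B) = \C_x$ and $\Ker(B) = \O(-3) \oplus \O(-1)$, and the snake lemma applied to $0 \to A \to \f \to B \to 0$ produces a canonical surjection $\F \twoheadrightarrow \C_x$ with kernel $\E$ of Hilbert polynomial $6m+2$, together with a resolution of $\E$ of the shape
\[
0 \lra \O(-3) \oplus \O(-2) \oplus \O(-1) \lra 3\O \lra \E \lra 0
\]
obtained by splicing.

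Now I distinguish: if $\E$ is semistable, I match the derived resolution with the description of $X_2 \subset \M(6,2)$ given in 3.3 of \cite{mult_six_two} to put $\E$ in that stratum; the extension $0 \to \E \to \F \to \C_x \to 0$ is automatically non-split by semistability of $\F$ of slope $1/2$, yielding case~(a). If $\E$ is not semistable, any destabilizing subsheaf has slope in $(1/3, 1/2]$ and embeds into the semistable sheaf $\F$; a classification of the possible $1$-dimensional stable sheaves of multiplicity $\leq 5$ with slope exactly $1/2$ that can embed in $\F$ shows that one of the Jordan-H\"older factors of the destabilizing subsheaf must be $\O_C$ for a conic $C \subset \P^2$. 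Hence $\O_C \hookrightarrow \F$ and $\G = \F/\O_C$ has Hilbert polynomial $4m+2$; comparing its induced resolution with 4.2.3 of \cite{drezet-maican} identifies $\G \in X_1 \subset \M(4,2)$, yielding case~(b).

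For the converse, extensions of either type lead back to a resolution of the form~\ref{2.1}(ii) by a mapping cone construction applied to the known resolutions of the outer sheaves, followed by cancellation of redundant summands; the $\O(-3)$ summand appearing on the left of the resolution automatically produces a Koszul relation forcing the entries of $\f_{11}$ to span only two dimensions. For the generic statement, I invoke the generic form $\O_S(3)(-P_1 - \cdots - P_7)$ of a sheaf in $X_2 \subset \M(6,2)$ from \cite{mult_six_two}, and observe that a generic eighth point $P_8 \in S$ and the canonical surjection $\O_S(3)(-P_1 - \cdots - P_7 + P_8) \twoheadrightarrow \C_{P_8}$ realize the generic element of type~(a); the dimension count $\dim X_{11} = 37 - 2 = 35 = 27 + 8$ (moduli of smooth sextic plus eight distinct points on it) confirms density.

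The main obstacle will be the case~(b) step: proving that non-semistability of $\E$ forces the existence of a conic subsheaf $\O_C \hookrightarrow \F$ rather than some other destabilizing sheaf, and then verifying that the resulting quotient $\G$ actually lies in the specific stratum $X_1 \subset \M(4,2)$. This requires a careful classification of $1$-dimensional semistable subsheaves of $\F$ respecting the slope constraints, and a precise tracking of the induced resolution through the quotient by $\O_C$.
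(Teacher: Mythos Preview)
Your construction of the extension $0\to\E\to\F\to\C_x\to 0$ and of the resolution
\[
0 \lra \O(-3)\oplus\O(-2)\oplus\O(-1) \stackrel{\psi}{\lra} 3\O \lra \E \lra 0
\]
via the snake lemma is correct and coincides with the paper's. The trouble is your case~(b), where the paper uses a cleaner device than the one you propose.

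Rather than testing semistability of $\E$, the paper imposes a matrix condition~(*) on the block $\O(-2)\oplus\O(-1)\to 3\O$ of $\psi$: that after row operations it takes the form
\[
\left[\begin{array}{cc} 0 & v_1 \\ 0 & v_2 \\ q & v_3 \end{array}\right].
\]
When (*) fails, the $W_1$-conditions on $\f$ translate directly into the defining conditions of 3.3~\cite{mult_six_two} for $\psi$, so $\E\in X_2\subset\M(6,2)$. When (*) holds, the corresponding $\O(-2)$-column of $\f$ lands in a single copy of $\O$, so $\O(-2)\xrightarrow{q}\O$ is a \emph{subcomplex of $\f$}; the snake lemma applied to this second subcomplex/quotient decomposition of $\f$ yields $0\to\O_C\to\F\to\G\to 0$ together with the explicit resolution
\[
0 \lra 2\O(-2)\oplus\O(-1) \lra \O(-1)\oplus 2\O \lra \G \lra 0,
\]
which can then be matched against 4.2.3~\cite{drezet-maican}.

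Your route instead tries to locate $\O_C\hookrightarrow\F$ by analysing destabilising subsheaves of $\E$, and there are two gaps. First, you pass from ``slope in $(1/3,1/2]$'' to ``slope exactly $1/2$'' without argument; a priori the maximal destabilising subsheaf of $\E$ could sit in $\M(5,2)$ with slope $2/5$, and even at slope $1/2$ its Jordan--H\"older factors could be stable sheaves in $\M(4,2)$ rather than structure sheaves of conics. Second, even granting an abstract embedding $\O_C\hookrightarrow\F$, you have no induced resolution of $\G=\F/\O_C$ to compare with 4.2.3~\cite{drezet-maican}, because your $\O_C$ did not arise from a subcomplex of $\f$. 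Condition~(*) is precisely the missing bridge: it is equivalent to ``$\E$ not semistable'' (since (*) exhibits $\O_C\subset\E$ with slope $1/2>1/3$, while $\neg$(*) forces $\E\in X_2$), but it simultaneously hands you the subcomplex that produces both $\O_C$ and the resolution of $\G$.
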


\begin{proof}
Let $\F = \Coker(\f)$ give a point in $X_{11}$, where $\f \in W_{11}$ is a morphism represented
by the matrix
\[
\left[
\ba{cccc}
u_1 & u_2 & 0 & 0 \\
\star & \star & q_1 & \ell_1 \\
\star & \star & q_2 & \ell_2 \\
\star & \star & q_3 & \ell_3
\ea
\right]
\]
and let (*) denote the property that the matrix
\[
\left[
\ba{cc}
q_1 & \ell_1 \\
q_2 & \ell_2 \\
q_3 & \ell_3
\ea
\right] \quad \text{be equivalent to a matrix of the form} \quad \left[
\ba{cc}
0 & v_1 \\
0 & v_2 \\
q& v_3
\ea
\right].
\]
Assume that (*) holds. Then $v_1, v_2$ are linearly independent and from the snake lemma
we get an extension
\[
0 \lra \O_C \lra \F \lra \G \lra 0,
\]
where $C \subset \P^2$ is the conic curve given by the equation $q=0$ and $\G$ has resolution
\[
0 \lra 2\O(-2) \oplus \O(-1) \stackrel{\psi}{\lra} \O(-1) \oplus 2\O \lra \G \lra 0,
\]
\[
\psi = \left[
\ba{ccc}
u_1 & u_2 & 0 \\
\star & \star & v_1 \\
\star & \star & v_2
\ea
\right].
\]
By 4.2.3 \cite{drezet-maican}, $\G$ gives a point in the stratum $X_1$ of $\M(4,2)$.
Conversely, from the horseshoe lemma we see that any extension of $\G$ by $\O_C$
is a sheaf in $X_{11}$.
Assume now that (*) is not fulfilled. From the snake lemma we have an extension
\[
0 \lra \E \lra \F \lra \C_x \lra 0,
\]
where $x \in \P^2$ is the point given by the equations $u_1=0$, $u_2=0$
and $\E$ has resolution
\[
0 \lra \O(-3) \oplus \O(-2) \oplus \O(-1) \stackrel{\psi}{\lra} 3\O \lra \E \lra 0,
\]
\[
\psi = \left[
\ba{ccc}
\star & q_1 & \ell_1 \\
\star & q_2 & \ell_2 \\
\star & q_3 & \ell_3
\ea
\right].
\]
From the facts that $\f$ is in $W_1$ and that (*) does not hold we see that $\psi$
satisfies the conditions of 3.3 \cite{mult_six_two}, i.e. that $\E$ belongs to the stratum $X_2$
of $\M(6,2)$.
Conversely, given an extension of $\C_x$ by $\E$,
we combine the above resolution of $\E$ with the resolution
\[
0 \lra \O(-3) \lra 2\O(-2) \lra \O(-1) \lra \C_x \lra 0
\]
to obtain the resolution
\[
0 \lra \O(-3) \lra \O(-3) \oplus 3\O(-2) \oplus \O(-1) \lra \O(-1) \oplus 3\O \lra \F \lra 0.
\]
We may cancel $\O(-3)$ as in the proof of 2.3.3 \cite{mult_five}.
Thus $\F$ is the cokernel of a morphism in $W_{11}$.

The statement about generic sheaves follows from 3.6 \cite{mult_six_two},
where a description of generic sheaves in the stratum $X_2$ of $\M(6,2)$ can be found.
\end{proof}

\begin{remark}
\label{3.3}
If $\F$ is stable-equivalent to $\O_C \oplus \F'$ for a conic curve $C \subset \P^2$
and a sheaf $\F'$ giving a point in the stratum $X_0$ of $\M(4,2)$ (cf. 4.1.1 \cite{drezet-maican}),
then $\F$ gives a point in $X_0$.
If $\F$ is stable-equivalent to $\O_C \oplus \G$, where $\G$ gives a point in the stratum $X_1$
of $\M(4,2)$ (cf. 4.2.3 op.cit.), then, as we saw at \ref{3.2}, $\F$ gives a point in $X_1 \setminus X_{10}$.
If $\F$ is stable-equivalent to $\O_C \oplus \O_Q(1)$, where $Q \subset \P^2$ is a quartic curve,
then $\h^0(\F \tensor \Om^1(1))= 3$ and either $\h^0(\F(-1))=1$ or $\h^1(\F)=1$.
This covers all possible properly semi-stable sheaves $\F$ in $\M(6,3)$.
In particular, we see that the strata $X_0$ and $X_1$ are disjoint.
\end{remark}


\section{The codimension $4$ stratum}

\noi
Recall the vector space $\W$ and the group $G$ from section 2.
For the sake of uniformity of notations denote $\W_2=\W$, $G_2=G$.
Let $W_2 \subset \W_2$ denote the subset of morphisms
$\f$ from \ref{2.1}(iii) and let $X_2$ be the image of $W_2$ in $\M(6,3)$.

\begin{prop}
\label{4.1}
There exists a geometric quotient of $W_2$ by $G_2$, which is a proper open subset of a fibre bundle
over $\N(3,3,2) \times \N(3,2,3)$ with fibre $\P^{21}$. Moreover, $W_2/G_2$ is isomorphic to $X_2$.
In particular, $X_2$ has codimension $4$.
\end{prop}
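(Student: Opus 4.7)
The plan is to exhibit $W_2/G_2$ as a proper open subset of a $\P^{21}$-bundle over $\N(3,3,2) \times \N(3,2,3)$, following the template of 2.2.2 in \cite{mult_five}, and then to identify this quotient with $X_2$ using the uniqueness of categorical quotients as in the proof of \ref{2.2}.

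First, a direct block-matrix computation shows that the projections $\f \mapsto \f_{11}$ and $\f \mapsto \f_{22}$ are $G_2$-equivariant for the natural actions of $(\GL(3) \times \GL(2))/\C^*$ on $\N(3,3,2)$ and $(\GL(2) \times \GL(3))/\C^*$ on $\N(3,2,3)$. Since the linear independence of the maximal minors of $\f_{11}$ and $\f_{22}$ forces their stability as Kronecker modules by King's criterion, this yields a $G_2$-invariant morphism $p \colon W_2 \to \N(3,3,2) \times \N(3,2,3)$ through which the anticipated quotient must factor.

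The central technical step is the construction of the bundle structure. Let $W_2' \subset \W_2$ be the locally closed subset cut out by $\f_{12}=0$ together with the stability conditions on $\f_{11}$ and $\f_{22}$, with injectivity of $\f$ relaxed, and let $\Sigma \subset W_2'$ be the $G_2$-invariant closed subset of morphisms $\f$ with
\[
\f_{21} = v\f_{11} + \f_{22} u \qquad \text{for some } u \in \Hom(3\O(-2), 2\O(-1)),\ v \in \Hom(2\O(-1), 3\O).
\]
The factorization
\[
\begin{pmatrix} \f_{11} & 0 \\ v\f_{11}+\f_{22}u & \f_{22} \end{pmatrix} = \begin{pmatrix} 1 & 0 \\ v & 1 \end{pmatrix}\begin{pmatrix} \f_{11} & 0 \\ 0 & \f_{22} \end{pmatrix}\begin{pmatrix} 1 & 0 \\ u & 1 \end{pmatrix}
\]
displays every $\f \in \Sigma$ as $G_2$-equivalent to the block-diagonal $\f_{11} \oplus \f_{22}$, whose pointwise rank is at most $4$ and which therefore fails to be injective; in particular $W_2 \subset W_2' \setminus \Sigma$ as a proper open subset. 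The recipe of 2.2.2 in \cite{mult_five} then produces a geometric quotient $(W_2' \setminus \Sigma)/G_2$ as a $\P^{21}$-bundle over $\N(3,3,2) \times \N(3,2,3)$. To verify the fibre dimension, I fix representatives $(\f_{11}, \f_{22})$ and observe that the stabilizer of this pair in $G_2$ acts on $\f_{21} \in \Hom(3\O(-2), 3\O)$ (of dimension $54$) via the scalar rescaling coming from the diagonal subgroup together with the additive translations $\f_{21} \mapsto \f_{21} + h_{21}\f_{11} - \f_{22} g_{21}$; the linear map $(g_{21}, h_{21}) \mapsto h_{21}\f_{11} - \f_{22} g_{21}$ has kernel $\{(-w\f_{11}, -\f_{22} w) : w \in \Hom(2\O(-1), 2\O(-1))\}$ of dimension $4$, so its image has dimension $32$, the affine quotient is $22$-dimensional, and the residual scalar projectivises it to $\P^{21}$.

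Finally, the identification with $X_2$ proceeds as in the proof of \ref{2.2}: the Beilinson spectral sequence applied to a sheaf $\F$ satisfying the cohomological conditions in the second column of the table produces a resolution of type (iii), which together with the orbit-closure analysis of 4.2.1 in \cite{drezet-maican} shows that the canonical map $\rho\colon W_2 \to X_2$ is a categorical quotient; uniqueness of categorical quotients then yields $W_2/G_2 \isom X_2$, realising $X_2$ as a proper open subset of the $\P^{21}$-bundle. The codimension follows from $\dim X_2 = 6 + 6 + 21 = 33 = 37 - 4$. The main obstacle is the formal bundle construction in the third paragraph --- rigorously patching the local affine quotients into a global $\P^{21}$-bundle requires the étale-local machinery developed in \cite{mult_five}, but the whole picture is driven by the dimension bookkeeping above and is essentially parallel to the analogous cases treated there.
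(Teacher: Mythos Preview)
Your overall structure matches the paper's, but you have inverted where the difficulty lies. The step you flag as the ``main obstacle'' --- the \'etale-local patching of affine quotients into a global $\P^{21}$-bundle --- is exactly the routine part handled by the machinery of 2.2.2 \cite{mult_five}, \emph{provided} $\Sigma$ is a genuine sub-bundle of the trivial bundle $W_2'$ over $U_1 \times U_2$. That sub-bundle property is where the paper invests almost all of its effort, and it is precisely what you assert without proof.

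Concretely, you claim that the kernel of $(g_{21},h_{21}) \mapsto h_{21}\f_{11} - \f_{22} g_{21}$ is $\{(-w\f_{11}, -\f_{22} w) : w \in \Hom(2\O(-1),2\O(-1))\}$. One inclusion is trivial, and injectivity of $w \mapsto (-w\f_{11}, -\f_{22} w)$ follows since $\f_{11}$ is generically surjective; so the kernel has dimension at least $4$. For the reverse inclusion, when the maximal minors of $\f_{11}$ (or of $\f_{22}$) have no common factor, the Hilbert--Burch exact sequence for the length-$3$ scheme they cut out forces any $g_{21}$ in the kernel to factor as $w\f_{11}$, and the rest follows. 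But semi-stability of $\f_{11}$ as a Kronecker module only guarantees that its maximal minors are linearly independent; they may still share a common linear factor, and likewise for $\f_{22}$. When this happens for \emph{both} blocks simultaneously, the Hilbert--Burch argument is unavailable, and the paper has to carry out an explicit and somewhat delicate matrix computation (writing $\f_{11} = \a\Phi$, $\f_{22} = \Psi\b$ for Koszul-type matrices $\Phi,\Psi$ and rank-$2$ scalar matrices $\a,\b$) to verify that the kernel still has dimension exactly $4$. Without this, $\Sigma$ is not known to have constant fibre dimension, the quotient $W_2'/\Sigma$ is not a vector bundle, and the descent step from \cite{mult_five} does not go through.
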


\begin{proof}
Let $W_2' \subset \W_2$ be the locally closed $G_2$-invariant subset of morphisms $\f$
satisfying the following conditions: $\f_{12}=0$ and $\f_{11}$, $\f_{22}$ are semi-stable
as Kronecker modules, i.e. each of them has linearly independent maximal minors.
Let
\[
U_1 \subset \Hom(3\O(-2), 2\O(-1)), \qquad U_2 \subset \Hom(2\O(-1), 3\O)
\]
denote the subsets of semi-stable morphisms.
Consider the $G_2$-invariant subset $\Sigma \subset W_2'$ given by the condition
\[
\f_{21}= v \f_{11} + \f_{22} u, \qquad u \in \Hom(3\O(-2), 2\O(-1)), \quad v \in \Hom(2\O(-1), 3\O).
\]
Note that $W_2$ is the set of injective morphisms inside $W_2' \setminus \Sigma$.
Clearly $W_2'$ is the trivial vector bundle over $U_1 \times U_2$ with fibre $\Hom(3\O(-2), 3\O)$.
Assume that $\Sigma$ is a sub-bundle. Then the argument at 2.2.2 \cite{mult_five}
shows that the quotient bundle $W_2'/\Sigma$ descends to a vector bundle $F$ over
\[
(U_1/\GL(3,\C) \times \GL(2,\C)) \times (U_2/\GL(2,\C) \times \GL(3,\C)) = \N(3,3,2) \times \N(3,2,3).
\]
Moreover, $\P(F)$ is the geometric quotient of $W_2' \setminus \Sigma$ modulo $G_2$.
Thus $W_2/G_2$ exists as a proper open subset of $\P(F)$.
At \ref{2.2} above we showed that the map $W_2 \to X_2$ is a categorical quotient map.
Thus $W_2/G_2$ is isomorphic to $X_2$.

It remains to show that $\Sigma$ is a sub-bundle of $W_2'$.
Given $(\f_{11},\f_{22}) \in U_1 \times U_2$, let $K(\f_{11}, \f_{22})$ denote the vector space
of pairs $(u,v)$ of morphisms as above, satisfying the relation $v \f_{11} + \f_{22} u =0$.
We must show that the dimension of $K(\f_{11},\f_{22})$ is independent of the choice
of $(\f_{11},\f_{22})$. Assume first that the maximal minors of $\f_{11}$, denoted $\z_1, \z_2, \z_3$,
have no common factor. Let $Z \subset \P^2$ be the scheme given by the ideal $(\z_1, \z_2, \z_3)$.
It is well-known that there is an exact sequence
\[
0 \lra 2\O(-3) \stackrel{\f_{11}^\T}{\lra} 3\O(-2) \stackrel{\z}{\lra} \O \lra \O_Z \lra 0,
\]
\[
\z = \left[
\ba{ccc}
\z_1 & - \z_2 & \phantom{-} \z_3
\ea
\right].
\]
Given $(u,v) \in K(\f_{11},\f_{22})$, we have the relation $\f_{22} u \z^\T = - v \f_{11} \z^\T = 0$,
which, in view of the fact that $\f_{22}$ is injective, yields the relation $u \z^\T=0$.
From the above exact sequence we deduce that $u = \a \f_{11}$ for some
$\a \in \Hom(2\O(-1), 2\O(-1))$.
We have the relation $(v+ \f_{22}\a) \f_{11}=0$, which, in view of the fact that $\f_{11}$ is generically
surjective, yields the relation $v = -\f_{22} \a$. Thus $K(\f_{11},\f_{22})$ is the space of pairs
of the form $(\a \f_{11}, - \f_{22} \a)$, so it has dimension $4$.
By symmetry, the same is true if the maximal minors of $\f_{22}$ have no common factor.

It remains to examine the case when the maximal minors of $\f_{11}$ have a common linear
factor and ditto for the maximal minors of $\f_{22}$. We may write $\f_{11}= \a \Phi$,
$\f_{22}= \Psi \b$, where $\a$, $\b$ are matrices with scalar entries of rank $2$ and
\[
\Phi = \left[
\ba{ccc}
-Y & \phantom{-} X & 0 \\
-Z & \phantom{-} 0 & X \\
\phantom{-} 0 & -Z & Y
\ea
\right], \qquad \qquad \Psi = \left[
\ba{ccc}
-S & -T & \phantom{-} 0 \\
\phantom{-} R & \phantom{-} 0 & - T \\
\phantom{-} 0 & \phantom{-} R & \phantom{-} S
\ea
\right],
\]
where $\{ X, Y, Z \}$ and $\{ R, S, T \}$ are bases of $V^*$. Write
\[
\xi = \left[
\ba{c}
X \\ Y \\ Z
\ea
\right], \qquad \qquad \rho = \left[
\ba{ccc}
R & S & T
\ea
\right].
\]
Let $(u, v)$ belong to $K(\f_{11}, \f_{22})$. Since $\f_{22}$ is injective and
$
\f_{22} u \xi = -v \f_{11} \xi = -v \a \Phi \xi = 0,
$
we get the relation $u \xi =0$. Thus $u= \a' \Phi$ for some matrix $\a' = (a_{ij})$
with scalar entries.
Analogously we have $\rho v = 0$, hence $v = \Psi \b'$ for some matrix $\b' = (b_{kl})$
with scalar entries.
Put $\g = \b' \a + \b \a'$. We have $\Psi \g \Phi = 0$, hence
\[
\g \Phi = \left[
\ba{c}
\phantom{-} T \\
- S \\
\phantom{-} R
\ea
\right] \left[
\ba{ccc}
c_1 & c_2 & c_3
\ea
\right]
\]
for some $c_1, c_2, c_3 \in \C$. Assume that $\g \neq 0$. There are $g, g' \in \GL(3,\C)$
such that
\[
g' \g g = \left[
\ba{ccc}
1 & 0 & 0 \\
\star & \star & \star \\
\star & \star & \star
\ea
\right]. \qquad \text{Thus} \quad g' \g \Phi = g' \left[
\ba{c}
\phantom{-} T \\
- S \\
\phantom{-} R
\ea
\right] \left[
\ba{ccc}
c_1 & c_2 & c_3
\ea
\right] = \left[
\ba{ccc}
1 & 0 & 0 \\
\star & \star & \star \\
\star & \star & \star
\ea
\right] g^{-1} \Phi
\]
showing that
\[
g^{-1} \Phi = \left[
\ba{ccc}
c_1 \ell & c_2 \ell & c_3 \ell \\
\star & \star & \star \\
\star & \star & \star
\ea
\right]
\]
for some non-zero $\ell \in V^*$.
But $g^{-1} \Phi \xi = 0$, hence $c_1 X + c_2 Y + c_3 Z = 0$.
This is absurd. We have proved that $\g = 0$.
There are $g_1, g_2 \in \GL(3,\C)$ such that
\[
\a = \left[
\ba{ccc}
1 & 0 & 0 \\
0 & 1 & 0
\ea
\right] g_1, \qquad \qquad \b = g_2 \left[
\ba{cc}
1 & 0 \\
0 & 1 \\
0 & 0
\ea
\right].
\]
The relation $\b' \a = - \b \a'$ is equivalent to the relation
\[
\left[
\ba{ccc}
a_{11} & a_{12} & a_{13} \\
a_{21} & a_{22} & a_{23} \\
0 & 0 & 0
\ea
\right] g_1^{-1} = - g_2^{-1} \left[
\ba{ccc}
b_{11} & b_{12} & 0 \\
b_{21} & b_{22} & 0 \\
b_{31} & b_{32} & 0
\ea
\right].
\]
The above matrix has the form
\[
\left[
\ba{ccc}
c_{11} & c_{12} & 0 \\
c_{21} & c_{22} & 0 \\
0 & 0 & 0
\ea
\right]. \qquad \text{Thus} \quad \a' = \left[
\ba{ccc}
c_{11} & c_{12} & 0 \\
c_{21} & c_{22} & 0
\ea
\right] g_1, \quad \b' = - g_2 \left[
\ba{cc}
c_{11} & c_{12} \\
c_{21} & c_{22} \\
0 & 0
\ea
\right].
\]
We conclude that $K(\f_{11}, \f_{22})$ is parametrised by the quadruple $(c_{ij})$,
so it is a vector space of dimension $4$.
\end{proof}

\begin{prop}
\label{4.2}
The generic sheaves giving points in $X_2$ have the form
\[
\O_C(2)(-P_1-P_2-P_3+P_4+P_5+P_6),
\]
where $C \subset \P^2$ is a smooth sextic curve,
$P_i$ are six distinct points on $C$, $P_1, P_2, P_3$ are non-colinear,
$P_4, P_5, P_6$ are also non-colinear.
In particular, $X_2$ is contained in the closure of $X_{11}$ and also in the closure of $X_{11}^\D$.
\end{prop}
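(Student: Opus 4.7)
The plan is to unpack the structure of a sheaf $\F \in X_2$ via the block decomposition of its $W_2$-resolution, then match the resulting data to a line bundle on a sextic, and finally address the closure statement by exhibiting an explicit degeneration.

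For $\f \in W_2$ with $\f_{12}=0$, the resolution sits as the middle term of a short exact sequence of two-term complexes
\[
0 \to [2\O(-1) \xrightarrow{\f_{22}} 3\O] \to [3\O(-2) \oplus 2\O(-1) \xrightarrow{\f} 2\O(-1) \oplus 3\O] \to [3\O(-2) \xrightarrow{\f_{11}} 2\O(-1)] \to 0,
\]
giving the four-term exact sequence of cohomology sheaves
\[
0 \to \Ker(\f_{11}) \to \Coker(\f_{22}) \to \F \to \Coker(\f_{11}) \to 0.
\]
Semi-stability of $\f_{11}$ and $\f_{22}$ as Kronecker modules means their maximal minors have no common factor, so a Hilbert--Burch computation yields $\Ker(\f_{11}) \isom \O(-4)$, $\Coker(\f_{11}) \isom \O_{Z_1}$, and $\Coker(\f_{22}) \isom \I_{Z_2}(2)$, where $Z_1, Z_2 \subset \P^2$ are length-$3$ subschemes. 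For generic $\f$ these are triples of distinct non-colinear points (the non-colinearity being equivalent to the absence of a common \emph{linear} factor among the minors); write $Z_1 = \{P_4, P_5, P_6\}$ and $Z_2 = \{P_1, P_2, P_3\}$.

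The composition $\O(-4) \to \I_{Z_2}(2) \hookrightarrow \O(2)$ is a section of $\O(6)$ vanishing along $Z_2$, cutting out a sextic $C \subset \P^2$ which for generic $\f$ is smooth and contains $Z_1 \cup Z_2$. Combining $0 \to \O(-4) \to \O(2) \to \O_C(2) \to 0$ with the four-term sequence via the snake lemma yields
\[
0 \to \O_C(2)(-Z_2) \to \F \to \O_{Z_1} \to 0.
\]
Since $C$ is smooth and $Z_1 \cap Z_2 = \emptyset$, this is an elementary modification on the curve, identifying $\F \isom \O_C(2)(-P_1 - P_2 - P_3 + P_4 + P_5 + P_6)$.

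For the closure statement, by the irreducibility of $X_2$ (Proposition~\ref{4.1}) it suffices to exhibit one generic $\F \in X_2$ lying in $\overline{X_{11}}$. The plan is to construct a flat family $\F_t$ of generic $X_{11}$-sheaves $\O_{C_t}(3)(-P'_1(t) - \cdots - P'_7(t) + P'_8(t))$ from Proposition~\ref{3.2}, specializing to $\F$ at $t = 0$. The natural degeneration has three of the negative points $P'_1(t), P'_2(t), P'_3(t)$ tending to a common line $L$ meeting $C_0$ at three additional points $Q_1, Q_2, Q_3$, while simultaneously $P'_7(t) \to P'_8(t)$. The rational equivalence $P'_1(0) + P'_2(0) + P'_3(0) \sim H - Q_1 - Q_2 - Q_3$ combined with the cancellation of the coincident pair rewrites the limiting line bundle as $\O_{C_0}(2)(Q_1 + Q_2 + Q_3 - P'_4 - P'_5 - P'_6)$, the $X_2$-form. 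The statement $X_2 \subset \overline{X_{11}^\D}$ then follows from $X_2^\D = X_2$ applied to the duality automorphism.

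The principal obstacle lies in making this degeneration rigorous: a Brill--Noether count shows that on a fixed smooth sextic the one-parameter families of degree-$7$ effective divisor classes required to reach $\F$ generically do not exist, so $C_t$ must be allowed to vary simultaneously with the points. One must then verify that the limiting sheaf $\F_0$ really satisfies $\h^0(\F_0 \tensor \Om^1(1)) = 2$, rather than the larger value that would land it in $X_3$, $X_4$, or a still deeper stratum.
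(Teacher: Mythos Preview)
Your identification of the generic sheaf via the short exact sequence of complexes is correct and is essentially the paper's snake-lemma argument. One small imprecision: semi-stability of $\f_{11}$, $\f_{22}$ as Kronecker modules means their maximal minors are \emph{linearly independent} (Proposition~\ref{2.1}(iii)), not that they have no common factor; the latter is the stronger generic condition you actually need, so this does not damage the argument, but the sentence as written is false.

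The closure argument, however, has a genuine gap that you yourself flag but do not close. Your degeneration sends three of the negative points $P_1'(t), P_2'(t), P_3'(t)$ to a common line $L$, so the three \emph{positive} points $Q_1, Q_2, Q_3$ of the limiting $X_2$-sheaf are the residual intersection $L \cap C_0$ and are therefore colinear. But the generic $X_2$-sheaf has \emph{non}-colinear positive points---that is part of the statement you are proving---so your family can never hit the generic locus of $X_2$, regardless of whether $C_t$ is allowed to move. The Brill--Noether obstruction you raise is a symptom of this mismatch, not the underlying problem.

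The paper's fix is short and sidesteps the difficulty entirely. Starting from a generic $\F = \O_C(2)(-P_1-P_2-P_3+P_4+P_5+P_6) \in X_2$, take the line $L$ through only \emph{two} of the positive points, say $P_4, P_5$. Writing $L \cap C = \{P_4, P_5, Q_4, Q_5, Q_6, Q_7\}$ one has $\O_C(1) \isom \O_C(P_4+P_5+Q_4+\cdots+Q_7)$, hence
\[
\F \isom \O_C(3)(-P_1-P_2-P_3-Q_4-Q_5-Q_6-Q_7+P_6).
\]
Now on the \emph{fixed} curve $C$ choose seven points $R_1, \ldots, R_7$ near $P_1, P_2, P_3, Q_4, Q_5, Q_6, Q_7$ satisfying the open conditions of Proposition~\ref{3.2} (not on a conic, no four colinear). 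Then $\O_C(3)(-R_1-\cdots-R_7+P_6)$ lies in $X_{11}$, and letting $R_i$ approach the original seven points gives a sequence in $X_{11}$ converging to $\F$. No variation of the curve, no Brill--Noether count, and no verification of the cohomology of the limit is needed, since the limit is $\F$ itself, already known to sit in $X_2$.
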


\begin{proof}
Given a morphism $\f \in W_2$ we denote by $\z_1, \z_2, \z_3$ the maximal minors of $\f_{11}$
and by $\upsilon_1, \upsilon_2, \upsilon_3$ the maximal minors of $\f_{22}$.
Let $W_{20} \subset W_2$ be the open $G_2$-invariant subset given by the following properties:
$\z_1, \z_2. \z_3$ generate the ideal of a reduced zero-dimensional scheme $Z \subset \P^2$,
$\upsilon_1, \upsilon_2, \upsilon_3$ generate the ideal of a reduced zero-dimensional scheme
$Y \subset \P^2$, $Z$ and $Y$ have no common points, the equation $\det(\f)=0$
determines a smooth sextic curve $C \subset \P^2$.
Let $X_{20} \subset X_2$ be the image of $W_{20}$ in $\M(6,3)$.
If $\F$ gives a point in $X_{20}$, then, from the snake lemma, we get an extension
\[
0 \lra \O_C(2)(-Y) \lra \F \lra \O_Z \lra 0.
\]
Now $Y$ is the union of three non-colinear points $P_1, P_2, P_3$ and
$Z$ is the union of three non-colinear points $P_4, P_5, P_6$ distinct from
$P_1, P_2, P_3$. Thus
\[
\F \isom \O_C(2)(-P_1-P_2-P_3+P_4+P_5+P_6).
\]
Conversely, consider a sheaf $\F$ as above. Clearly $\F$ is stable and gives a point
in $\M(6,3)$. Our aim is to show that $\F$ gives a point in $X_{20}$.

We claim that $\H^1(\F)=0$. Denote $\F'= \O_C(2)(-P_1-P_2-P_3)$.
According to \cite{modules-alternatives}, propositions 4.5 and 4.6,
we have an exact sequence
\[
0 \lra \O(-4) \oplus 2\O(-1) \lra 3\O \lra \F' \lra 0
\]
(compare with 2.3.4(i) \cite{mult_five}). Thus $\h^0(\F')=3$.
Consider the exact sequence
\[
0 \lra \F' \lra \F \lra \O_Z \lra 0
\]
and let $\d \colon \H^0(\O_Z) \to \H^1(\F')$ denote the connecting homomorphism.
To prove that $\h^0(\F)=3$ it is enough to show that $\d$ is injective or, equivalently,
that its dual $\d^*$ is surjective.
By Serre duality $\d^*$ is the restriction morphism
\[
\xymatrix
{
\H^0(\O_C(-2)(P_1+P_2+P_3) \tensor \omega_C) \ar[r] \egal[d]
&
\H^0((\O_C(-2)(P_1+P_2+P_3) \tensor \omega_C)_{|Z}) \egal[d]
\\
\H^0(\O_C(1)(P_1+P_2+P_3)) \egal[d]
&
\H^0(\O_C(1)(P_1+P_2+P_3)_{|Z}) \egal[d]
\\
\H^0(\O_C(1)) & \H^0(\O_C(1)_{|Z})
}.
\]
The identity $\H^0(\O_C(1)(P_1+P_2+P_3)) \isom \H^0(\O_C(1)) \isom V^*$
follows from the exact sequence
\[
0 \lra 3\O(-3) \lra 2\O(-2) \oplus \O(1) \lra \O_C(1)(P_1+P_2+P_3) \lra 0
\]
obtained by dualising the above resolution of $\F'$.
Let $\e_i \colon \H^0(\O_Z) \to \C$ be the linear form of evaluation at $P_i$, $i=4,5,6$.
We see from the above diagram that, given a one-form $u$, $\d^*(u)$ is a multiple of $\e_i$
precisely if the line given by the equation $u=0$ does not pass through $P_i$ but passes
through the other two points.
By hypothesis $P_4, P_5, P_6$ are non-colinear, hence such a line exists.
We conclude that each $\e_i$ is in the image of $\d^*$, so this map is surjective.

Thus far we have shown that $\H^1(\F)=0$. By duality $\H^0(\F(-1))$ also vanishes.
By proposition \ref{2.1}, $\F$ gives a point in $X_0 \cup X_1 \cup X_2$.
Notice that the subsheaf of $\F$ generated by its global sections is $\F'$.
If $\F$ gave a point in $X_0$, then $\F$ would be generated by its global sections,
which is not the case. If $\F$ gave a point in $X_1$, then $\F/\F'$ would be the zero-sheaf
or the structure sheaf of a closed point. This, again, is not the case.
Thus $\F= \Coker(\f)$ for some $\f \in W_2$.
Assume that $\z_1, \z_2, \z_3$ have a common linear factor.
Then $\Coker(\f_{11}) \isom \O_L$ for a line $L \subset \P^2$
and from the snake lemma we obtain the exact sequences
\[
0 \lra \F'' \lra \F \lra \O_L \lra 0,
\]
\[
0 \lra \O(-3) \oplus 2\O(-1) \lra 3\O \lra \F'' \lra 0.
\]
Notice that $\F''$ is generated by its global sections and $\H^0(\F'') = \H^0(\F)$.
It follows that $\F''=\F'$, hence $\O_L \isom \O_Z$, which is absurd.
This proves that $\z_1, \z_2, \z_3$ have no common factor, i.e. they generate
the ideal of a zero-dimensional scheme.
In point of fact, the above argument shows that $\z_1, \z_2, \z_3$
generate the ideal of $Z$.
By duality $\upsilon_1, \upsilon_2, \upsilon_3$ generate the ideal of $Y$.
The curve $C$ has equation $\det(\f)=0$ and is smooth by hypothesis.
We conclude that $\f$ belongs to $W_{20}$, i.e. that $\F$ gives a point in $X_{20}$.

To prove the inclusion $X_2 \subset \overline{X}_{11}$ fix a generic sheaf in $X_2$
as in the proposition. We may assume that the line $P_4 P_5$ meets $C$ at six
distinct points $P_4, P_5, Q_4, Q_5, Q_6, Q_7$. Denote $Q_1 = P_1$, $Q_2 = P_2$,
$Q_3 = P_3$. Thus
\[
\O_C(2)(-P_1 -P_2 -P_3 + P_4 +P_5 + P_6) \isom
\O_C(3)(-Q_1 - \cdots - Q_7 + P_6).
\]
Choose distinct points $R_i$, $1 \le i \le 7$, on $C$, that are also distinct from $P_6$
and satisfy the conditions of \ref{3.2},
i.e. they do not lie on a conic curve and no four of them are colinear.
Then, according to loc.cit.,
\[
\O_C(3)(-R_1 - \cdots - R_7 + P_6)
\]
gives a point in $X_{11}$. Making $R_i$ converge to $Q_i$ for $1 \le i \le 7$
we obtain a sequence of points converging to the fixed generic point of $X_2$.
Thus $X_2 \subset \overline{X}_{11}$.
The inclusion $X_2 \subset \overline{X}_{11}^\D$ follows from the fact that $X_2$
is self-dual.
\end{proof}

\begin{prop}
\label{4.3}
\emph{(i)} Let $\F$ be a sheaf giving a point in $\M(6,3)$ and satisfying the conditions
$\h^0(\F(-1))=0$, $\h^1(\F)=1$. Then $\h^0(\F \tensor \Om^1(1))=3$.
These sheaves are precisely the sheaves having resolution of the form
\[
0 \lra \O(-3) \oplus 3\O(-1) \stackrel{\f}{\lra} 4\O \lra \F \lra 0,
\]
where $\f_{12}$ is semi-stable as a Kronecker $V$-module.

\medskip

\noi
\emph{(ii)} Let $\F$ be a sheaf giving a point in $\M(6,3)$ and satisfying the conditions
$\h^0(\F(-1))=1$, $\h^1(\F)=0$. Then $\h^0(\F \tensor \Om^1(1))=3$.
These sheaves are precisely the sheaves having resolution of the form
\[
0 \lra 4\O(-2) \stackrel{\f}{\lra} 3\O(-1) \oplus \O(1) \lra \F \lra 0,
\]
where $\f_{11}$ is semi-stable as a Kronecker $V$-module.
\end{prop}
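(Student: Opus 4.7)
The plan is to establish part (i) by analysing the Beilinson spectral sequence converging to $\F$, in analogy with the proof of Proposition~\ref{2.2}, and then to deduce part (ii) from part (i) via the duality automorphism of $\M(6,3)$. For part (i) I work with the spectral sequence whose $E_1^{p,q}$-term is $\H^q(\F \tensor \Om^{-p}(-p)) \tensor \O(p)$, $p \in \{-2,-1,0\}$, and which abuts to $\F$ in total degree $0$.

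The first task is to assemble the cohomology data. From $\h^0(\F(-1))=0$ and $\chi(\F(-1))=-3$ we obtain $\h^1(\F(-1))=3$; from $\chi(\F)=3$ and $\h^1(\F)=1$ we obtain $\h^0(\F)=4$. Tensoring the Euler sequence $0 \to \Om^1(1) \to 3\O \to \O(1) \to 0$ with $\F$, which remains exact because $\O(1)$ is locally free, and taking Euler characteristics yields $\chi(\F \tensor \Om^1(1)) = 3\chi(\F) - \chi(\F(1)) = 0$; since $\F \tensor \Om^1(1)$ is one-dimensional, this gives $\h^0(\F \tensor \Om^1(1)) = \h^1(\F \tensor \Om^1(1))$, a common value we denote by $b$. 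The $E_1$-display then reads
\[
\xymatrix{
3\O(-2) \ar[r]^-{\psi_1} & b\O(-1) \ar[r]^-{\psi_2} & \O \\
0 & b\O(-1) \ar[r]^-{\f} & 4\O
}.
\]

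Imposing $E_\infty^{p,q}=0$ at every $(p,q)$ with $p+q \neq 0$ yields, in order, that $\f$ is injective, $\psi_2$ is surjective, and that the $d_2$-differential $\f_5 \colon \Ker(\psi_1) \to \Coker(\f)$ is injective. Surjectivity of $\psi_2 \colon b\O(-1) \to \O$ forces $b \geq 3$, because any two linear forms in $V^*$ share a zero; injectivity of $\f$ forces $b \leq 4$. The value $b=4$ is excluded by an argument combining the purity of $\F$, its prescribed multiplicity $6$, and the fact that $\Coker(\f)$ would then be supported on a quartic: the only way to reconstitute $\F$ from $E_\infty^{-1,1}$ and $\Coker(\f)/\Im(\f_5)$ under $b=4$ leads either to a non-pure sheaf or to an object of multiplicity different from $6$. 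Hence $b=3$. For $b=3$, after a change of basis $\psi_2$ is the Euler evaluation, so $\Ker(\psi_2) \simeq \Om^1$, and the Koszul complex of the corresponding basis of $V^*$ identifies $\Ker(\psi_1) \simeq \O(-3)$ for generic $\psi_1$. Lifting $\f_5$ to a map $\O(-3) \to 4\O$ through $4\O \to \Coker(\f)$, which is possible because $\Ext^1(\O(-3),3\O(-1))=\H^1(3\O(2))=0$, and combining with $\f$ produces the advertised injection $\O(-3) \oplus 3\O(-1) \to 4\O$ with cokernel $\F$.

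The Kronecker semi-stability of the block $\f_{12} \colon 3\O(-1) \to 4\O$ follows from that of $\F$: a failure of King's criterion for $\f_{12}$ produces a block decomposition inducing a subsheaf of $\F = \Coker(\f)$ whose slope exceeds $1/2$, contradicting semi-stability. Conversely, any injective $\f$ of the stated form with $\f_{12}$ semi-stable as a Kronecker module has cokernel in $\M(6,3)$ satisfying the cohomological conditions, by a routine Hilbert-polynomial check and the Beilinson argument of Proposition~\ref{2.2}. Part (ii) is then obtained from part (i) by the duality automorphism: if $\F$ satisfies the hypotheses of (ii), then $\F' = \F^\D(1)$ satisfies those of (i) by the Serre-duality identities $\h^0(\F(k))=\h^1(\F^\D(-k))$ and $\h^1(\F(k))=\h^0(\F^\D(-k))$, and dualising the resolution of $\F'$ furnished by (i) and twisting by $\O(1)$ yields the resolution of $\F$ asserted in (ii), the Kronecker semi-stability of $\f_{11}$ following from that of $\f_{12}$ by transpose symmetry. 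The main obstacle throughout is the exclusion of $b=4$, which requires a careful combination of purity, semi-stability, and the prescribed multiplicity of $\F$.
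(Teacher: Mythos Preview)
The paper's own proof is a one-line citation of 5.3 in \cite{maican} for part (i), followed by duality for part (ii); your Beilinson spectral-sequence argument is therefore more self-contained and parallels what the paper does elsewhere (e.g.\ in \ref{2.2} and \ref{4.6}). The overall architecture and the duality reduction for (ii) are correct. However, your handling of the case $b=4$ has a genuine gap.

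When $b=4$, the map $\f\colon 4\O(-1)\to 4\O$ is an injection between bundles of equal rank, so $\Coker(\f)$ is a torsion sheaf with Hilbert polynomial $4m+4$. Since $\Ker(\psi_1)\subset 3\O(-2)$ is torsion-free, the injectivity of $\f_5$ forces $\Ker(\psi_1)=0$, hence $\f_5=0$ and $E_\infty^{0,0}=\Coker(\f)$; meanwhile $E_\infty^{-1,1}=\Ker(\psi_2)/\Im(\psi_1)$ has Hilbert polynomial $2m-1$. These pieces assemble to a sheaf of multiplicity $4+2=6$ and Euler characteristic $4+(-1)=3$, and there is no purity obstruction. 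So your stated reason (``non-pure or wrong multiplicity'') does not apply. What actually fails is \emph{semi-stability}: $\Coker(\f)$ is a subsheaf of $\F$ of slope $1>\tfrac12=\pp(\F)$.

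The same issue recurs when you write $\Ker(\psi_1)\simeq\O(-3)$ ``for generic $\psi_1$''. The map $\psi_1$ is dictated by $\F$, not generic. Since kernels of maps between locally free sheaves on $\P^2$ are locally free, $\Ker(\psi_1)\simeq\O(-d)$ for some $d\ge 2$; one then computes that the quotient $E_\infty^{-1,1}=\Coker(3\O(-2)\to\Om^1)$ has Hilbert polynomial $(3-d)m+\tfrac{d(d-3)}{2}$, and semi-stability of $\F$ (comparing slopes of quotients) forces $d=3$, whence $E_\infty^{-1,1}=0$. Finally, the converse direction --- that any such $\f$ with $\f_{12}$ Kronecker semi-stable has semi-stable cokernel --- is asserted but not argued; this is precisely the nontrivial content carried by the paper's citation.
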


\begin{proof}
Part (i) is a particular case of 5.3 \cite{maican}.
Part (ii) is equivalent to (i) by duality.
\end{proof}

\noi
Let $\W_3 = \Hom(\O(-3) \oplus 3\O(-1), 4\O)$ and let $W_3 \subset \W_3$ be the open subset of
morphisms as in \ref{4.3}(i). The group
\[
G_3 = (\Aut(\O(-3) \oplus 3\O(-1)) \times \Aut(4\O))/\C^*
\]
acts on $\W_3$ by conjugation and leaves $W_3$ invariant.
Let $\W_3^\D= \Hom(4\O(-2), 3\O(-1) \oplus \O(1))$ and let $W_3^\D \subset \W_3^\D$
be the subset of morphisms $\f$ as in \ref{4.3}(ii). The natural group acting on $\W_3^\D$
is denoted $G_3^\D$. Let $X_3, X_3^\D$ be the images of $W_3, W_3^\D$ in $\M(6,3)$.
These notations are justified because the morphisms of $W_3^\D$ are the transposes of the morphisms
in $W_3$, hence $X_3^\D$ is the image of $X_3$ under the duality automorphism of $\M(6,3)$.
Consider the vector spaces
\[
\U = \Hom(3\O(-1), 4\O), \qquad \qquad \U^\D = \Hom(4\O(-2), 3\O(-1)).
\]
Let $U \subset \U$, $U^\D \subset \U^\D$ denote the subsets of morphisms that are semi-stable
as Kronecker $V$-modules. According to 3.3 \cite{mult_five}, the kernel of a morphism in $U^\D$
is isomorphic to $\O(-5)$, $\O(-4)$ or $\O(-3)$.
The subset $U_0^\D \subset U^\D$ of morphisms $\psi$ for which $\Ker(\psi) \isom \O(-5)$ is open.
We denote by $U_1^\D, U_2^\D$ the subsets of morphisms $\psi$ for which $\Ker(\psi) \isom \O(-4)$,
respectively $\O(-3)$. The counterparts in $U$ of these subsets are denoted $U_0, U_1, U_2$.
Let $W_{3i} \subset W_3$, $i=0,1,2$, be the subset of those morphisms $\f$ for which $\f_{12} \in U_i$
and let $X_{3i}$ be its image in $\M(6,3)$. Analogously we define $W_{3i}^\D$ and $X_{3i}^\D$.

\begin{prop}
\label{4.4}
The sheaves giving points in $X_{30}$ are precisely the sheaves of the form $\J_Z(3)$,
where $Z \subset \P^2$ is a zero-dimensional scheme of length $6$ not contained in a conic curve,
contained in a sextic curve $C$, and $\J_Z \subset \O_C$ is its ideal sheaf.

The generic sheaves in $X_3$ have the form $\O_C(3)(-P_1 - \cdots - P_6)$,
where $C \subset \P^2$ is a smooth sextic curve and $P_i$ are six distinct points on $C$
that are not contained in a conic curve. The generic sheaves in $X_3^\D$ have the form
$\O_C(1)(P_1+ \cdots +P_6)$.
In particular, $X_3$ is contained in the closure of $X_{11}$ and $X_3^\D$ is contained in the
closure of $X_{11}^\D$.
Thus $X_3$ and $X_3^\D$ are contained in the closure of $X_1$.
\end{prop}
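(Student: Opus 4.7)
The plan is a three-step proof mirroring the three assertions of the proposition.

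In step one I would characterise $X_{30}$. Starting from $\F = \Coker(\f)$ with $\f \in W_{30}$, I would analyse $\f$ via the splitting of its source, setting $\G := \Coker(\f_{12} \colon 3\O(-1) \to 4\O)$. Kronecker-semistability of $\f_{12}$, together with $\f_{12} \in U_0$, forces $\f_{12}$ to be injective as a bundle map, so $\G$ is torsion-free of generic rank one with $c_1(\G)=3$, and a Hilbert-polynomial calculation identifies $\G \isom \I_Z(3)$ for some zero-dimensional $Z \subset \P^2$ of length $6$. The condition $\f_{12} \in U_0$ corresponds, via the Hilbert--Burch theorem, to $\I_Z$ admitting the minimal free resolution $0 \to 3\O(-4) \to 4\O(-3) \to \I_Z \to 0$, equivalently to $Z$ not lying on any conic: were $Z$ on a conic, the minimal resolution would have the shape $0 \to \O(-5) \to \O(-3) \oplus \O(-2) \to \I_Z \to 0$ coming from the Koszul complex on $(Q,F)$, which after twisting by three cannot be brought into the shape $3\O(-1) \to 4\O$ by adjoining free summands. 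Composing $\f_{11}$ with $4\O \twoheadrightarrow \G$ yields $\bar\f_{11} \colon \O(-3) \to \I_Z(3)$, i.e.\ a section of $\I_Z(6)$ cutting out a sextic $C \supset Z$, and taking quotients gives $\F \isom \I_Z(3)/\I_C(3) = \J_Z(3)$. Conversely, given $Z$ and $C$ as in the statement, I would concatenate the Hilbert--Burch resolution of $\I_Z(3)$ with the inclusion $\I_C(3) \hookrightarrow \I_Z(3)$ coming from the equation of $C$ to assemble a resolution $0 \to \O(-3) \oplus 3\O(-1) \to 4\O \to \J_Z(3) \to 0$; injectivity of the combined $\f$ follows from injectivity of $\f_{12}$ and of $\bar\f_{11}$, and Gieseker semistability of $\J_Z(3)$ is automatic since it is rank-one torsion-free on its support.

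In step two I would extract the form of the generic sheaves. Because $W_{30}$ is open and dense in $W_3$, a generic $\F \in X_3$ has the form $\J_Z(3)$ for a generic length-$6$ scheme $Z$ not on a conic, sitting on a generic sextic $C$; by Bertini, $C$ may be assumed smooth, and $Z$ may be taken reduced as $P_1 + \cdots + P_6$, whence $\J_Z(3) \isom \O_C(3)(-P_1 - \cdots - P_6)$. For $X_3^\D$, I would invoke the duality automorphism: on a smooth sextic, adjunction gives $\omega_C \isom \O_C(3)$, so for a line bundle $\F$ on $C$ the formula $\F^\D = {\mathcal Ext}^1(\F, \omega_{\P^2}) \tensor \O_{\P^2}(1)$ reduces to $\F^\D \isom \F^{-1} \tensor \omega_C \tensor \O_C(1) \isom \F^{-1} \tensor \O_C(4)$; applied to $\O_C(3)(-P_1 - \cdots - P_6)$ this yields $\O_C(1)(P_1 + \cdots + P_6)$.

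In step three I would establish the inclusions into $\overline{X}_1$ by a deformation argument. Starting from a generic $\F = \O_C(3)(-P_1 - \cdots - P_6) \in X_3$ on a smooth sextic $C$, pick a seventh point $P_7 \in C$ so that $P_1, \ldots, P_7$ satisfy the conditions of Proposition~\ref{3.2}, and let $P_8(t) \in C$ vary with $P_8(t) \to P_7$ as $t \to 0$. For $t \neq 0$ near $0$, the conditions of Proposition~\ref{3.2} remain in force, so $\F_t = \O_C(3)(-P_1 - \cdots - P_7 + P_8(t))$ lies in $X_{11}$, while $\F_t$ specialises to $\F$ as the divisor $P_8(t) - P_7$ tends to zero. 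Irreducibility of $X_3$ together with closedness of $\overline{X}_{11}$ then forces $X_3 \subset \overline{X}_{11}$; the dual inclusion $X_3^\D \subset \overline{X}_{11}^\D$ follows by applying the duality automorphism, and combining gives $X_3 \cup X_3^\D \subset \overline{X}_1$.

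The principal obstacle is the equivalence in step one between the Kronecker-theoretic condition $\f_{12} \in U_0$ and the geometric condition that $Z$ does not lie on a conic; this requires carefully tracking the Hilbert--Burch resolution through the twist by three and ruling out the conic-cubic complete intersection shape. Once this dictionary is in place, the assembly/disassembly of the resolution, the adjunction computation for $\F^\D$, and the point-deformation on $C$ are comparatively routine.
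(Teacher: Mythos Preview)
Your proposal is correct and follows essentially the same route as the paper. The paper is terser: for step one it cites \cite{modules-alternatives}, propositions 4.5 and 4.6, for the equivalence between $\f_{12}\in U_0$ and $\Coker(\f_{12})\isom\I_Z(3)$ with $Z$ not on a conic (precisely your Hilbert--Burch dictionary), and then refers to 2.3.4(i) of \cite{mult_five} for the passage from $\I_Z(3)$ to $\J_Z(3)$; your explicit snake-lemma description of this passage is what lies behind that reference. For step three the paper runs the identical degeneration $P_8\to P_7$.

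One small point worth tightening: your justification that ``Gieseker semistability of $\J_Z(3)$ is automatic since it is rank-one torsion-free on its support'' is only immediate when $C$ is integral; for reducible or non-reduced sextics a rank-one torsion-free sheaf need not be semi-stable. The cleanest fix is simply to invoke Proposition~\ref{4.3}(i), which is an if-and-only-if: once you have assembled the resolution $0\to\O(-3)\oplus 3\O(-1)\to 4\O\to\J_Z(3)\to 0$ with $\f_{12}$ a semi-stable Kronecker module, that proposition already gives you $[\J_Z(3)]\in\M(6,3)$. Also, in step three you implicitly use that for six points not on a conic no four are colinear (otherwise the condition of Proposition~\ref{3.2} could not be met when adjoining $P_7$); the paper makes this observation explicit.
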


\begin{proof}
By \cite{modules-alternatives}, propositions 4.5 and 4.6, the cokernels of morphisms in $U_0$
are precisely the twisted ideal sheaves $\I_Z(3) \subset \O(3)$ of zero-dimensional schemes
$Z \subset \P^2$ of length $6$ that are not contained in conic curves.
The first statement now follows as at 2.3.4(i) \cite{mult_five}.

To prove the inclusion $X_3 \subset \overline{X}_{11}$ we use the form of generic sheaves in $X_{11}$
found at \ref{3.2}. Fix a generic point in $X_3$ represented by $\O_C(3)(-P_1- \cdots -P_6)$.
Notice that no four points among $P_i$ are colinear. We can thus choose points $P_7, P_8 \in C$
such that $P_1, \ldots, P_8$ satisfy the conditions of loc.cit.
Thus $\O_C(3)(-P_1 - \cdots - P_7 + P_8)$ gives a point in $X_{11}$.
Making $P_8$ converge to $P_7$ we obtain a sequence of points in $X_{11}$
converging to the fixed generic point of $X_3$.
\end{proof}

\begin{prop}
\label{4.5}
\emph{(i)} The sheaves of the form $\Coker(\f)$, $\f \in W_{32}^\D$, are precisely the extension
sheaves of the form
\[
0 \lra \O_Q(1) \lra \F \lra \O_C \lra 0,
\]
satisfying the condition $\H^1(\F)=0$. Here $Q$ and $C$ are arbitrary quartic, respectively conic curves
in $\P^2$.

\medskip

\noi
\emph{(ii)} The sheaves of the form $\Coker(\f)$, $\f \in W_{32}$, are precisely the extension sheaves
of the form
\[
0 \lra \O_C \lra \F \lra \O_Q(1) \lra 0,
\]
satisfying the condition $\H^0(\F(-1))=0$.

\medskip

\noi
Thus $X_{32}$ coincides with $X_{32}^\D$ and consists of all stable-equivalence classes
$\O_Q(1) \oplus \O_C$. Moreover, $X_3 \cap X_3^\D = X_{32}$.
All sheaves giving points in $X_{30}$, $X_{31}$, $X_{30}^\D$, $X_{31}^\D$ are stable.
\end{prop}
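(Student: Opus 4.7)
The plan is to prove part (i) directly and to obtain part (ii) via the duality functor $\F \mapsto \F^\D$, which interchanges $W_{32}$ and $W_{32}^\D$ and accordingly the two forms of extension. The concluding assertions about $X_{32}$, $X_3 \cap X_3^\D$, and the stability of $X_{30}, X_{31}, X_{30}^\D, X_{31}^\D$ will then be read off from (i), (ii), Proposition~\ref{4.3}, and Remark~\ref{3.3}.

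For the forward direction of (i), let $\f \in W_{32}^\D$ and set $K := \Ker(\f_{11}) \isom \O(-3)$. Since $\f$ is injective, the restriction $\f_{21}|_K$ is a nonzero map $\O(-3) \to \O(1)$, hence multiplication by some quartic form $q$; let $Q \subset \P^2$ be the curve $\{q=0\}$. A Hilbert polynomial computation in the exact sequence $0 \to \O(-3) \to 4\O(-2) \to 3\O(-1) \to \Coker(\f_{11}) \to 0$ yields $\PP(\Coker(\f_{11}), m) = 2m+1$; moreover, the condition $K \isom \O(-3)$ forces the four linear entries of $\O(-3) \to 4\O(-2)$ to span $V^*$, whence $\Coker(\f_{11})$ is pure (as a direct Ext computation shows) and, by standard results on Kronecker modules as in \cite{modules-alternatives}, isomorphic to $\O_C$ for a conic curve $C$. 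Applying the snake lemma to
\[
\xymatrix{
0 \ar[r] & K \ar[r] \ar[d]^-{q} & 4\O(-2) \ar[r] \ar[d]^-{\f} & 4\O(-2)/K \ar[r] \ar[d] & 0 \\
0 \ar[r] & \O(1) \ar[r] & 3\O(-1) \oplus \O(1) \ar[r] & 3\O(-1) \ar[r] & 0
}
\]
then produces the extension $0 \to \O_Q(1) \to \F \to \O_C \to 0$; the vanishing $\H^1(\F) = 0$ is part of the defining conditions of $X_3^\D$.

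For the reverse direction, given such an extension with $\H^1(\F) = 0$, the long exact sequence of $0 \to \O_Q \to \F(-1) \to \O_C(-1) \to 0$ combined with $\h^0(\O_Q) = 1$ and $\h^0(\O_C(-1)) = 0$ yields $\h^0(\F(-1)) = 1$. Proposition~\ref{4.3}(ii) then furnishes a resolution $0 \to 4\O(-2) \stackrel{\f}{\to} 3\O(-1) \oplus \O(1) \to \F \to 0$ with $\f_{11}$ semi-stable, and it remains to pin down $\Ker(\f_{11}) \isom \O(-3)$. The key observation is that $\Hom(\O(1), \F) \isom \H^0(\F(-1))$ is one-dimensional, so the two natural nonzero maps $\O(1) \to \F$ must coincide up to a scalar. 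The one coming from the extension factors as $\O(1) \twoheadrightarrow \O_Q(1) \hookrightarrow \F$ with kernel $\O(-3)$ (the ideal of $Q$ twisted by $\O(1)$), while the one coming from the resolution has kernel $\f_{21}(\Ker(\f_{11})) \isom \Ker(\f_{11})$ (the restriction $\f_{21}|_{\Ker(\f_{11})}$ is injective by injectivity of $\f$). Matching kernels gives $\Ker(\f_{11}) \isom \O(-3)$, i.e., $\f \in W_{32}^\D$.

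For the remaining assertions, any $\F$ as in (i) or (ii) has Jordan--H\"older factors $\O_Q(1)$ and $\O_C$, both of slope $\pp(\F) = 1/2$, hence is properly semi-stable with stable-equivalence class $[\O_Q(1) \oplus \O_C]$. Conversely, for each choice of $(Q,C)$ the cohomological condition in (i), respectively in (ii), defines a nonempty open subset of $\Ext^1(\O_C, \O_Q(1))$, respectively $\Ext^1(\O_Q(1), \O_C)$, so there exist extensions representing the class in both $X_{32}$ and $X_{32}^\D$; hence $X_{32} = X_{32}^\D = \{\,[\O_Q(1) \oplus \O_C]\,\}$. A stable sheaf cannot lie in $X_3 \cap X_3^\D$, as the conditions $\h^0(\F(-1)) = 0$ and $\h^0(\F(-1)) = 1$ are incompatible; hence $X_3 \cap X_3^\D$ consists of properly semi-stable classes, and by Remark~\ref{3.3} these are exactly $[\O_Q(1) \oplus \O_C]$, giving $X_3 \cap X_3^\D = X_{32}$. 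The same remark shows that the only properly semi-stable classes in $X_3 \cup X_3^\D$ belong to $X_{32}$, so $X_{30}, X_{31}, X_{30}^\D, X_{31}^\D$ contain only stable sheaves. The main obstacle is the reverse direction of (i), where the recognition $\Ker(\f_{11}) \isom \O(-3)$ requires matching the kernels of the two incarnations of the unique section in $\H^0(\F(-1))$.
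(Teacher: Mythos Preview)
Your proof of (i) (and hence (ii) by duality) is correct and parallels the paper's approach: the forward direction is the same snake-lemma computation, while your reverse direction via the kernel-matching of the unique map $\O(1)\to\F$ is a clean alternative to the reference the paper invokes. One small omission in the reverse direction: you should note that an extension of $\O_C$ by $\O_Q(1)$ is automatically semi-stable (both factors are stable of slope $1/2$), so that Proposition~\ref{4.3}(ii) applies.

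There is, however, a genuine logical gap in your derivation of the concluding assertions. You argue that every properly semi-stable class in $X_3\cup X_3^\D$ lies in $X_{32}$, and then conclude that $X_{30},X_{31},X_{30}^\D,X_{31}^\D$ contain only stable sheaves. But this inference presupposes that $X_{30},X_{31}$ are \emph{disjoint} from $X_{32}$, which is exactly what is at stake: a priori a properly semi-stable class could lie in both $X_{30}$ and $X_{32}$, since S-equivalence is coarser than isomorphism. The paper breaks this circularity by working at the level of sheaves rather than classes: if $\F=\Coker(\f)$ with $\f\in W_{30}\cup W_{31}$ were properly semi-stable, then (since $\H^0(\F(-1))=0$ rules out $\O_Q(1)\subset\F$) part~(ii) gives $\F\isom\Coker(\f')$ for some $\f'\in W_{32}$; now the resolution of Proposition~\ref{4.3}(i) is unique up to the $G_3$-action (it is read off from the Beilinson spectral sequence), so $\f$ and $\f'$ lie in the same $G_3$-orbit, contradicting the $G_3$-invariance and disjointness of the $W_{3i}$. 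Once you insert this uniqueness step, your argument goes through; without it, the final conclusions do not follow.

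A minor point: your nonemptiness claim (``the cohomological condition defines a nonempty open subset of $\Ext^1$'') is not justified as stated; the trivial extension fails both conditions. Nonemptiness is most easily seen from the forward direction (any $\f\in W_{32}^\D$, for instance the explicit matrix the paper writes down, produces such an extension for prescribed $Q,C$).
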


\begin{proof}
The proof of (i) is nearly identical to the proof of 3.3.2 \cite{mult_five} and is based on the fact that
the cokernel of any morphism $\psi \in U_2^\D$ is isomorphic to $\O_C$ for some conic curve
$C \subset \P^2$ and, conversely, any $\O_C$ is isomorphic to $\Coker(\psi)$ for some
$\psi \in U_2^\D$.
Part (ii) is equivalent to (i) by duality.

Let $Q \subset \P^2$ be a quartic curve given by the equation $h=0$ and let $C \subset \P^2$
be a conic curve with equation $g=0$.
We can choose $\ell_1, \ell_2, \ell_3 \in V^*$ and $f_1, f_2, f_3 \in \SS^3 V^*$ such that
\[
X \ell_3 - Y \ell_2 + Z \ell_1 = g \qquad \text{and} \qquad X f_1 + Y f_2 + Z f_3 = h.
\]
The cokernel of any morphism $\f \in W_{32}^\D$ represented by a matrix of the form
\[
\left[
\ba{cccc}
\ell_1 & -Y & \phantom{-} X & 0 \\
\ell_2 & -Z & \phantom{-} 0 & X \\
\ell_3 & \phantom{-} 0 & -Z & Y \\
f & \phantom{-} f_1 & \phantom{-} f_2 & f_3
\ea
\right]
\]
is stable-equivalent to $\O_Q(1) \oplus \O_C$. This proves that $X_{32}$ coincides with $X_{32}^\D$
and consists of all stable equivalence classes $\O_Q(1) \oplus \O_C$.
According to remark \ref{3.3}, any point of $X_3$ or of $X_3^\D$ represented by a properly semi-stable
sheaf must be in $X_{32}$. It is now easy to see that $X_{30}$, $X_{31}$, $X_{30}^\D$, $X_{31}^\D$
contain only stable sheaves, so these sets are disjoint, so $X_3 \cap X_3^\D = X_{32}$.
For example, assume that $\F = \Coker(\f)$, $\f \in W_{30} \cup W_{31}$, is stable-equivalent to
$\O_Q(1) \oplus \O_C$. Since $\H^0(\F(-1))=0$, $\O_Q(1)$ cannot be a subsheaf of $\F$.
By part (ii) of the proposition $\F = \Coker(\f')$ for some $\f' \in W_{32}$.
It is easy to see that $\f$ and $\f'$ must be in the same $G_3$-orbit, which is absurd.
\end{proof}

\begin{prop}
\label{4.6}
There exists a geometric quotient of $W_3$ modulo $G_3$, which is an open subset
of a fibre bundle over $\N(3,3,4)$ with fibre $\P^{21}$.
The image of $W_{30} \cup W_{31}$ in $W_3/G_3$ is the geometric quotient
$(W_{30} \cup W_{31})/G_3$ and is isomorphic to $X_3^\st=X_{30} \cup X_{31}$.
By duality there exists a geometric quotient $W_3^\D/G_3$ and a certain open subset
of this quotient, namely $(W_{30}^\D \cup W_{31}^\D)/G_3$, is isomorphic
to $(X_3^\D)^\st=X_{30}^\D \cup X_{31}^\D$.
In particular, $X_3$ and $X_3^\D$ have codimension $4$.
\end{prop}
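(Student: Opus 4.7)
The plan is to adapt the strategy from the proof of Proposition~\ref{4.1} to the present, simpler setting, treating $\f_{12}$ as the sole Kronecker-semi-stable block. First I would introduce the $G_3$-invariant locally closed subset $W_3' \subset \W_3$ consisting of morphisms $\f$ with $\f_{12} \in U$. The projection $\f \mapsto \f_{12}$ exhibits $W_3'$ as the trivial vector bundle over $U$ with fibre $\Hom(\O(-3), 4\O) \isom 4\SS^3 V^*$ of dimension $40$. I would then define the $G_3$-invariant subset $\Sigma \subset W_3'$ by the condition
\[
\f_{11} = \f_{12}\, u, \qquad u \in \Hom(\O(-3), 3\O(-1)) \isom 3\SS^2 V^*.
\]
Every $\f \in \Sigma$ is $G_3$-equivalent to a morphism with vanishing first block-column and is therefore not injective, so $W_3 \subset W_3' \setminus \Sigma$ is open.

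The principal technical step is to verify that $\Sigma$ is a sub-bundle of $W_3'$ over $U$ of constant fibre dimension $18$. This is where I expect most care to be needed, although the argument is short: equivalently one must check that for every $\f_{12} \in U$ the linear map $u \mapsto \f_{12} \circ u$ is injective, and this reduces to the statement that any Kronecker-semi-stable morphism $3\O(-1) \to 4\O$ is injective as a morphism of sheaves, which follows from King's numerical criterion (a non-zero sheaf kernel would produce a destabilising subspace of $\C^3$). Once $\Sigma$ is known to be a sub-bundle, the argument at 2.2.2 of \cite{mult_five} applies verbatim: the quotient bundle $W_3'/\Sigma$ has rank $40-18 = 22$ and descends to a vector bundle $F$ over $U/\!/(\GL(3,\C) \times \GL(4,\C)) = \N(3,3,4)$, and $\P(F)$ is a geometric quotient of $W_3' \setminus \Sigma$ by $G_3$ with fibre $\P^{21}$. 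Since $W_3$ is open and $G_3$-invariant in $W_3' \setminus \Sigma$, the geometric quotient $W_3/G_3$ exists as an open subset of $\P(F)$.

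To connect this quotient with $X_3$, I would run the Beilinson spectral sequence argument of \ref{2.2} with the cohomological data of \ref{4.3}(i); this shows that the canonical map $W_3 \to X_3$ is a categorical quotient. On the open subset $W_{30} \cup W_{31} \subset W_3$, Proposition~\ref{4.5} ensures that all cokernels are stable, so the fibres of $W_3 \to X_3$ above points of $X_{30} \cup X_{31}$ are precisely the $G_3$-orbits; applying \cite{popov-vinberg}, theorem 4.2, as in \ref{2.2} and \ref{3.1}, upgrades the categorical quotient there to a geometric one, yielding the isomorphism $(W_{30} \cup W_{31})/G_3 \isom X_3^\st = X_{30} \cup X_{31}$. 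The statements for $W_3^\D$ and $X_3^\D$ follow by applying the entire argument to the transposed resolution, i.e.\ by the duality automorphism of $\M(6,3)$. Finally, the codimension of $X_3$ in $\M(6,3)$ is
\[
37 - \dim X_3 = 37 - \bigl( \dim \N(3,3,4) + 21 \bigr) = 37 - (12 + 21) = 4,
\]
and the same bound holds for $X_3^\D$ by duality.
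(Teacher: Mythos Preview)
Your proposal is correct and follows essentially the same line as the paper; only the bookkeeping differs. The paper obtains $W_3/G_3$ by identifying your $W_3'\setminus\Sigma$ with the set $\W_3^\ss(\L)$ of morphisms semi-stable for a polarisation with $0<\l_1<1/4$ and invoking 9.3 of \cite{drezet-trautmann} for the $\P^{21}$-bundle quotient, rather than rerunning the sub-bundle descent argument of \ref{4.1}. For the isomorphism $(W_{30}\cup W_{31})/G_3\isom X_3^\st$ the paper does not pass through Popov--Vinberg: it instead shows that the canonical bijective morphism has a morphism inverse by reconstructing $\f$ from $\F$ via the Beilinson spectral sequence~II (diagram (2.2.3) of \cite{drezet-maican}, not the spectral sequence~I used in \ref{2.2}). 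This distinction is not idle, since your appeal to \cite{popov-vinberg}, theorem~4.2, requires $X_3^\st$ to be normal, a point you leave unaddressed; the paper's explicit inverse construction sidesteps this issue entirely.
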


\begin{proof}
Let $\W^\ss_3(\L) \subset \W_3$ denote the set of morphisms that are semi-stable
with respect to a polarisation $\L= (\l_1, \l_2, \m_1)$ satisfying the relation
$0 < \l_1 < 1/4$ (notations as as \cite{drezet-trautmann}).
Concretely, $\W_3^\ss(\L)$ consists of those morphisms $\f$ for which $\f_{11}$
is semi-stable as a Kronecker $V$-module and $\f_{11} \neq \f_{12} u$
for any $u \in \Hom(\O(-3), 3\O(-1))$.
According to 5.3 \cite{maican}, $W_3$ is the set of injective morphisms inside $\W_3^\ss(\L)$.
According to 9.3 \cite{drezet-trautmann}, there exists a geometric quotient
$\W_3^\ss(\L)/G_3$
and it is a fibre bundle as in the proposition (compare with 3.2.1 \cite{mult_five}).
Thus $W_3/G_3$ exists as a proper open subset of $\W_3^\ss(\L)/G_3$.

We saw at \ref{4.5} above that $W_{30} \cup W_{31}$ is the subset of morphisms
$\f \in W_3$ for which $\Coker(\f)$ is stable. In flat families stability is an open condition,
hence $W_{30} \cup W_{31}$ is an open $G_3$-invariant subset of $W_3$,
hence $(W_{30} \cup W_{31})/G_3$ exists as a proper open subset of $W_3/G_3$.
Notice that $X_{30} \cup X_{31}$ is an open subset of $X_3$, so it inherits the canonical
induced reduced structure.
The canonical morphism
\[
(W_{30} \cup W_{31})/G_3 \lra X_{30} \cup X_{31}
\]
is easily seen to be bijective.
We will show that the inverse of this map is also a morphism by constructing
resolution \ref{4.3}(i) starting from the Beilinson spectral sequence of a sheaf
$\F$ in $X_{30} \cup X_{31}$.
Diagram (2.2.3) \cite{drezet-maican} takes the form
\[
\xymatrix
{
3\O(-2) \ar[r]^-{\f_1} & 3\O(-1) \ar[r]^-{\f_2} & \O \\
0 & 3\O(-1) \ar[r]^-{\f_4} & 4\O
}.
\]
As at 2.2.4 \cite{mult_five}, we have $\Ker(\f_1) \isom \O(-3)$ and $\Ker(\f_2) = \Im(\f_1)$.
The exact sequence (2.2.5) \cite{drezet-maican} now takes the form
\[
0 \lra \O(-3) \lra \Coker(\f_4) \lra \F \lra 0.
\]
Clearly the morphism $\O(-3) \to \Coker(\f_4)$ lifts to a morphism $\O(-3) \to 4\O$.
We obtain $\f \in \W_3$ such that $\Coker(\f) \isom \F$.
Since $\F$ was a priori chosen in $X_{30} \cup X_{31}$, we see that $\f$
belongs to $W_{30} \cup W_{31}$.
\end{proof}

\begin{prop}
\label{4.7}
The sheaves $\F$ giving points in $X_{31}^\D$ are precisely the extension sheaves having
one of the forms
\[
0 \lra \E \lra \F \lra \O_L \lra 0,
\]
\[
0 \lra \G \lra \F \lra \O_L(1) \lra 0,
\]
\[
0 \lra \O_C(1) \lra \F \lra \O_L(2) \lra 0
\]
and satisfying the cohomological conditions $\h^0(\F(-1))=1$, $\h^1(\F)=0$.
Here $L \subset \P^2$ is an arbitrary line, $C \subset \P^2$ is an arbitrary quintic curve,
$\E$ is an arbitrary sheaf giving a point in the stratum $X_3$ of $\M(5,2)$
(cf. 2.3.5 \cite{mult_five}) and $\G$ is an arbitrary sheaf giving a point in the stratum
$X_3$ of $\M(5,1)$ (cf. 3.1.5 op.cit.)
\end{prop}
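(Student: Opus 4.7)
I would prove Proposition \ref{4.7} in two directions, using the snake lemma for the ``only if'' part and proposition \ref{4.3}(ii) with an exclusion argument for the ``if'' part. The key setup is the following: for $\F = \Coker(\f)$ with $\f \in W_{31}^\D$, so $\Ker(\f_{11}) \isom \O(-4)$, the composition $\O(-4) \hookrightarrow 4\O(-2) \stackrel{\f}{\lra} 3\O(-1) \oplus \O(1)$ factors through the $\O(1)$ summand (since $\f_{11}$ annihilates this kernel) and is injective (since $\f$ is), producing a nonzero degree-five form and hence a quintic curve $C \subset \P^2$.

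For the ``only if'' direction, I would apply the snake lemma to the diagram whose top row is $0 \to \O(-4) \to 4\O(-2) \to \Im(\f_{11}) \to 0$ and whose bottom row is $0 \to \O(1) \to 3\O(-1) \oplus \O(1) \to 3\O(-1) \to 0$, with vertical maps induced by $\f$, to obtain
\[ 0 \lra \O_C(1) \lra \F \lra \Coker(\f_{11}) \lra 0. \]
Now $\Coker(\f_{11})$ has Hilbert polynomial $m + 3$, and by the analysis of semi-stable Kronecker modules in 3.3 of \cite{mult_five}, its maximal zero-dimensional subsheaf $\TT$ has length $0$, $1$, or $2$, with $\Coker(\f_{11})/\TT \isom \O_L(2 - \length(\TT))$ for some line $L \subset \P^2$. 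Composing the surjection $\F \twoheadrightarrow \Coker(\f_{11})$ with the projection onto $\O_L(k)$ for $k = 2$, $1$, $0$ (respectively) yields the three extensions listed in the proposition; the resulting kernels have multiplicity $5$ and the prescribed Euler characteristic, and in cases (i) and (ii) can be identified with sheaves in the strata $X_3$ of $\M(5,2)$ and $\M(5,1)$ by comparing the cohomology inherited from the extension sequence with the characterizations in 2.3.5 and 3.1.5 of \cite{mult_five}.

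For the ``if'' direction, given $\F$ as an extension of one of the three stated forms satisfying $\h^0(\F(-1)) = 1$ and $\h^1(\F) = 0$, proposition \ref{4.3}(ii) supplies a resolution $0 \to 4\O(-2) \to 3\O(-1) \oplus \O(1) \to \F \to 0$ with $\f_{11}$ semi-stable, and it remains to show $\Ker(\f_{11}) \isom \O(-4)$. I would exclude the two alternatives: $\Ker(\f_{11}) \isom \O(-5)$ would place $\F$ in $X_{30}^\D$, whose sheaves (dual to those of \ref{4.4}) do not admit a subsheaf supported on a quintic of the prescribed form, while $\Ker(\f_{11}) \isom \O(-3)$ would by \ref{4.5}(i) force $\F$ to be stable-equivalent to $\O_Q(1) \oplus \O_C$ with $Q$ quartic and $C$ conic, incompatible with the quintic/line decomposition of the three extension types. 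The main obstacle is the fine analysis of $\Coker(\f_{11})$ into precisely these three shapes and the identification of the kernels in cases (i) and (ii) with sheaves in the strata $X_3$ of $\M(5,2)$ and $\M(5,1)$, both of which require careful translation of the Kronecker and extension data into the cohomological conditions defining those target strata.
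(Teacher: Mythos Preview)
Your ``only if'' direction matches the paper's approach closely: the snake lemma gives $0 \to \O_C(1) \to \F \to \Coker(\f_{11}) \to 0$ with $C$ quintic, and the torsion analysis of $\Coker(\f_{11})$ (whose Hilbert polynomial is $t+3$) yields the three cases. The paper bounds $\length(\TT) \le 2$ not by Kronecker-module analysis but directly from semi-stability of $\F$ (the quotient $\O_L(d)$ must have slope $\ge 1/2$, so $d \ge 0$), and identifies the kernels $\E$, $\G$ simply by citing 2.3.5 and 3.1.5 of \cite{mult_five}, since those propositions characterise the relevant strata via extensions of exactly this shape.

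Your ``if'' direction has a genuine gap: you invoke proposition \ref{4.3}(ii), but that proposition takes as input a sheaf already giving a point in $\M(6,3)$, i.e.\ a semi-stable sheaf. You have not verified semi-stability of the extension $\F$. The paper devotes roughly half of its proof to this: it notes that $\E$, $\G$, $\O_C(1)$ are themselves extensions of zero-dimensional sheaves by $\O_C(1)$, so all three cases yield $0 \to \O_C(1) \to \F \to \CC \to 0$ with $\CC$ having torsion of length $\le 2$, and then runs a slope computation on an arbitrary subsheaf $\F' \subset \F$ according to whether the image of $\F'$ in $\CC$ lands in the torsion or not. Without this step you cannot apply \ref{4.3}(ii).

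Your exclusion argument for pinning down $\Ker(\f_{11}) \isom \O(-4)$ is also weaker than the paper's. The paper observes that in all three cases the subsheaf of $\F(-1)$ generated by $\H^0(\F(-1))$ is exactly $\O_C$ (the quintic), and this forces $\f \in W_{31}^\D$ directly, since for $\f \in W_{30}^\D$ or $W_{32}^\D$ the global sections of $\F(-1)$ would generate the structure sheaf of a sextic or a quartic curve instead. Your claim that sheaves in $X_{30}^\D$ ``do not admit a subsheaf supported on a quintic of the prescribed form'' is not obvious and would itself require proof; likewise the incompatibility with $\O_Q(1) \oplus \O_{C'}$ for the $\O(-3)$ case is not clearly argued (and in any event presupposes that $\F$ is known to be stable, which brings you back to the missing semi-stability check).
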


\begin{proof}
Consider a sheaf $\F = \Coker(\f)$, $\f \in W_{31}^\D$.
Denote $\CC= \Coker(\f_{11})$ and let $\TT$ be the zero-dimensional torsion of $\CC$.
The Hilbert polynomial of $\CC$ is $\PP(t)=t+3$, hence $\CC/\TT$ is isomorphic to $\O_L(d)$
for some line $L \subset \P^2$ and integer $d \le 2$.
From the snake lemma we have an exact sequence
\[
0 \lra \O_C(1) \lra \F \lra \CC \lra 0,
\]
where $C \subset \P^2$ is a quintic curve. Since $\O_L(d)$ is a quotient sheaf of the semi-stable
sheaf $\F$, we see that $d= 0, 1$ or $2$, that is $\length(\TT)=2, 1$ or $0$.

Assume that $\length(\TT)=2$. Let $\E$ be the preimage of $\TT$ in $\F$.
According to 2.3.5 \cite{mult_five}, $\E$ gives a point in the stratum $X_3$ of $\M(5,2)$.
Assume that $\length(\TT)=1$. Let $\G$ be the preimage of $\TT$ in $\F$.
According to 3.1.5 op.cit., $\G$ gives a point in the stratum $X_3$ of $\M(5,1)$.

Conversely, assume that $\F$ is an extension as in the proposition and satisfies the conditions
$\h^0(\F(-1))=1$, $\h^1(\F)=0$. Firstly, we will show that $\F$ is semi-stable.
Let $\F' \subset \F$ be a non-zero subsheaf of multiplicity at most $5$.
There are extensions of the form
\[
0 \lra \O_C(1) \lra \E \lra \O_Z \lra 0,
\]
\[
0 \lra \O_C(1) \lra \G \lra \C_x \lra 0,
\]
where $C \subset \P^2$ is a a quintic curve, $\O_Z$ is the structure sheaf of a zero-dimensional
scheme $Z \subset \P^2$ of length $2$ and $\C_x$ is the structure sheaf of a closed point $x \in \P^2$.
All three possible extensions in the proposition lead to an extension
\[
0 \lra \O_C(1) \lra \F \lra \CC \lra 0,
\]
where the zero-dimensional torsion $\TT$ of $\CC$ has length at most $2$.
If the image of $\F'$ in $\CC$ is a subsheaf of $\TT$, then $\F'$ is a subsheaf of $\E$,
or of $\G$, or of $\O_C(1)$. These three sheaves are stable, hence $\pp(\F')$ is at most
$2/5$, respectively $1/5$, respectively $0$.
Assume now that the image of $\F'$ in $\CC$ is not a subsheaf of $\TT$.
Precisely as at 4.4 \cite{mult_six_two}, we have the equation
\[
\PP_{\F'}(t)= (5-d)t + \frac{d^2-5d}{2} + t + 3 -a
\]
for some integers $a$ and $d$ satisfying the inequalities $a \ge 0$, $1 \le d \le 4$.
Thus
\[
\pp(\F') \le \frac{d^2-5d+6}{2(6-d)} \le \frac{1}{2} = \pp(\F).
\]
Secondly, applying \ref{4.3}(ii), we obtain $\f \in W_3^\D$ such that $\F \isom \Coker(\f)$.
The subsheaf generated by the global sections of $\E(-1)$ is isomorphic to $\O_C$
and the same is true of $\G$. Thus $\H^0(\F(-1))$ generates $\O_C$ in all three possible
cases. We deduce that $\f$ is in $W_{31}^\D$, otherwise $\H^0(\F(-1))$
would generate the structure sheaf of a quartic or of a sextic curve.
\end{proof}


\section{The codimension $5$ stratum}

\begin{prop}
\label{5.1}
Let $\F$ be a sheaf giving a point in $\M(6,3)$ and satisfying the cohomological conditions
$\h^0(\F(-1))=1$, $\h^1(\F)=1$. Then $\h^0(\F \tensor \Om^1(1))=3$ or $4$.
The sheaves in the first case are precisely the sheaves having resolution of the form
\[
0 \lra \O(-3) \oplus \O(-2) \stackrel{\f}{\lra} \O \oplus \O(1) \lra \F \lra 0,
\]
where $\f_{12} \neq 0$.
\end{prop}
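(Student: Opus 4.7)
The plan is to mirror Proposition \ref{2.2}: apply the Beilinson spectral sequence converging to $\F(1)$, extract a two-term locally free resolution, and simplify via the Euler sequence. I would begin by recording the cohomology. From $\PP(m) = 6m + 3$ and the hypotheses, $\h^1(\F(-1)) = 4$ and $\h^0(\F) = 4$; a standard vanishing (semi-stability combined with $\pp(\F(1)) = 3/2$) gives $\h^1(\F(1)) = 0$ and hence $\h^0(\F(1)) = 9$. Tensoring the Euler sequence $0 \to \Om^1(1) \to 3\O \to \O(1) \to 0$ with $\F$ gives $\chi(\F \tensor \Om^1(1)) = 0$, hence $\h^0(\F \tensor \Om^1(1)) = \h^1(\F \tensor \Om^1(1)) =: a$, and from the associated long exact sequence $a = 12 - \rank(\mu)$, where $\mu \colon V^* \tensor \H^0(\F) \to \H^0(\F(1))$ is the multiplication map. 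Since $\rank(\mu) \le 9$, the lower bound $a \ge 3$ is immediate.

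Next I would apply Beilinson I as in the proof of Proposition \ref{2.2}. The $\EE^1$-display takes the form
\[
\xymatrix@R=10pt{
4\O(-1) \ar[r]^-{g_1} & \Om^1(1) & 0 \\
\O(-1) \ar[r]^-{g_3} & 4\Om^1(1) \ar[r]^-{g_4} & 9\O.
}
\]
Vanishing of the off-diagonal $\EE^\infty$-terms forces $g_3$ to be injective with $\Im g_3 = \Ker g_4$, and forces $d_2 \colon \Ker g_1 \to \Coker g_4$ to be injective. Splicing this data, using the Euler sequence to replace each $\Om^1(1)$ by $3\O \to \O(1)$, cancelling common summands, and untwisting by $\O(-1)$, I would arrive at a resolution of the form
\[
0 \lra \O(-3) \oplus \O(-2) \oplus (a-3)\O(-1) \stackrel{\f}{\lra} (a-3)\O(-1) \oplus \O \oplus \O(1) \lra \F \lra 0.
\]

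To exclude $a \ge 5$ I would invoke semi-stability of $\F$: any surplus of $\O(-1)$-summands beyond one would force a subsheaf or quotient of $\F$ of slope strictly greater than $\pp(\F) = 1/2$, contradicting semi-stability. Hence $a \in \{3, 4\}$. In the first case no extra $\O(-1)$-summands survive and the resolution takes the stated form. The condition $\f_{12} \neq 0$ is forced by $\h^0(\F(-1)) = 1$: if $\f_{12} = 0$, then twisting the resolution by $\O(-1)$ and computing $\H^0$ directly yields a value different from $1$. Conversely, given any $\f$ of the stated form with $\f_{12} \neq 0$, the three cohomological conditions follow by routine long exact sequence computations from the resolution and its tensor products with $\O(-1)$ and $\Om^1(1)$. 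The main obstacle is the upper bound $a \le 4$; the Beilinson data alone does not control $a$, and a separate semi-stability argument — analogous to those in \cite{maican}, 4.6--4.8 — is required.
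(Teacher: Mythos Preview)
Your forward direction has a mismatch between the spectral sequence you set up and the resolution you claim to extract from it. The display you write is Beilinson I for $\F(1)$, whose middle column is governed by $\h^i(\F)$, not by $a = \h^0(\F \tensor \Om^1(1))$; the quantity $a$ simply does not appear in that page. The paper instead runs the Beilinson \emph{monad} for $\F$ itself (diagram (2.2.1) of \cite{drezet-maican}), whose middle term carries $m\O(-1)$ with $m = \h^0(\F\tensor\Om^1(1))$, and from this one reads off directly a resolution
\[
0 \lra \O(-2) \lra \O(-3)\oplus\O(-2)\oplus m\O(-1) \stackrel{\f}{\lra} (m-3)\O(-1)\oplus 4\O \lra \F \lra 0
\]
with $\f_{13}=0$. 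The bound $m\le 4$ then falls out because $\F$ surjects onto $\Coker(\f_{11},\f_{12})$, a quotient of slope $<1/2$ once $m\ge 5$. Your route via $\F(1)$ can be made to work (this is essentially what Proposition~\ref{7.1} does), but it lands you at the $3\times 3$ resolution with a single $\O(-1)$ on each side, and the link to $a$ is then a separate computation rather than something the spectral sequence hands you.

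More seriously, your converse is incomplete and contains an error. You verify only the three cohomological identities, but the proposition asserts that $\Coker(\f)$ lies in $\M(6,3)$, i.e.\ is semi-stable; this is the substance of the converse and occupies most of the paper's proof, which splits into three cases according to $\gcd(\f_{12},\f_{22})$ and carefully estimates slopes of subsheaves. Your justification for $\f_{12}\neq 0$ is also wrong: twisting the resolution by $\O(-1)$ gives $\h^0(\F(-1))=\h^0(\O(-1)\oplus\O)=1$ regardless of $\f$, since $\H^0(\O(-4)\oplus\O(-3))=\H^1(\O(-4)\oplus\O(-3))=0$. The real reason $\f_{12}\neq 0$ is required is semi-stability: if $\f_{12}=0$ then the snake lemma produces an inclusion $\O_{C'}(1)\hookrightarrow\F$ with $C'=\{\f_{22}=0\}$ a cubic, and $\pp(\O_{C'}(1))=1>1/2$ destabilises $\F$.
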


\begin{proof}
Assume that $\F$ gives a point in $\M(6,3)$ and satisfies the above cohomological conditions.
Put $m= \h^0(\F \tensor \Om^1(1))$. The Beilinson free monad (2.2.1) \cite{drezet-maican}
for $\F$ has the form
\[
0 \lra \O(-2) \lra 4\O(-2) \oplus m\O(-1) \lra m\O(-1) \oplus 4\O \lra \O \lra 0
\]
and yields the resolution
\[
0 \lra \O(-2) \lra 4\O(-2) \oplus m\O(-1) \lra \Om^1 \oplus (m-3)\O(-1) \oplus 4\O \lra \F \lra 0.
\]
Note that $m \ge 3$. Using the Euler sequence and arguing as at 2.1.4 \cite{mult_five}
we arrive at a resolution
\[
0 \lra \O(-2) \stackrel{\psi}{\lra} \O(-3) \oplus \O(-2) \oplus m\O(-1) \stackrel{\f}{\lra} (m-3)\O(-1)
\oplus 4\O \lra \F \lra 0
\]
in which $\psi_{11}=0$, $\psi_{21}=0$, $\f_{13}=0$ and the entries of $\psi_{31}$ span $V^*$.
From the fact that $\F$ maps surjectively onto $\Coker(\f_{11}, \f_{12})$,
we deduce the inequality $m \le 4$.
In the sequel we will assume that $m=3$.
Thus $\Coker(\psi_{31}) \isom \Om^1(1)$ and we have a resolution
\[
0 \lra \O(-3) \oplus \O(-2) \oplus \Om^1(1) \stackrel{\f}{\lra} 4\O \lra \F \lra 0.
\]
Arguing as at loc.cit., we see that $\Coker(\f_{13}) \isom \O \oplus \O(1)$,
which leads to a resolution as in the proposition.

Conversely, assume that $\F$ has a resolution as in the proposition.
The relations $\h^0(\F(-1))=1$, $h^1(\F)=1$ are obvious, while the relation
$\h^0(\F \tensor \Om^1(1))= 3$ follows from Bott's formulas.
We need to show that $\F$ is semi-stable.
Let $\F' \subset \F$ be a subsheaf of multiplicity at most $5$.
We will distinguish three situations according to the degree of the greatest common divisor
of $\f_{12}$ and $\f_{22}$.

Assume first that $\f_{12}$ and $\f_{22}$ have no common factor.
Let $Z \subset \P^2$ be the zero-dimensional scheme of length $6$ given by the ideal
$(\f_{12}, \f_{22})$. Notice that $\F = \J_Z(3)$, where $\J_Z \subset \O_C$
is the ideal of $Z$ as a subscheme of the sextic curve $C \subset \P^2$
given by the equation $\det(\f)=0$. According to \cite{maican}, lemma 6.7,
there is a sheaf $\A \subset \O_C(3)$ containing $\F'$ such that $\A/\F'$
is supported on finitely many points and $\O_C(3)/\A$ is isomorphic to $\O_S(3)$
for a curve $S \subset C$ of degree $d \le 5$.
The slope of $\F'$ can be estimated as at 2.1.4 \cite{mult_five}:
\begin{align*}
\PP_{\F'}(t) & = \PP_{\A}(t) - \h^0(\A/\F') \\
& = \PP_{\O_C}(t+3) - \PP_{\O_S}(t+3) - \h^0(\A/\F') \\
& = (6-d)t + \frac{(d-6)(d-3)}{2} - \h^0(\A/\F'), \\
\pp(\F') & = \frac{3-d}{2} - \frac{\h^0(\A/\F')}{6-d},
\end{align*}
hence $\pp(\F') \le 1/2$ except, possibly, when $d=1$ and $\h^0(\A/\F') \le 2$.
Thus we need to examine the case when $S$ is the line given by the equation
$\ell = 0$, where $\ell \in V^*$.
If $\A = \F'$, then $\J_S \subset \J_Z$, hence $Z$ is a subscheme of $S$.
From B\'ezout's theorem we see that $\ell$ divides both $\f_{12}$ and $\f_{22}$,
contrary to our hypothesis.
If $\h^0(\A/\F')=1$ or $2$, then $S$ contains a subscheme of $Z$ of length $4$ or $5$
and we get a contradiction as above.

Secondly, assume that $\gcd(\f_{12},\f_{22})=\ell$ for some $\ell \in V^*$.
Let $L \subset \P^2$ be the line given by the equation $\ell=0$.
We have an extension
\[
0 \lra \O_L(-1) \lra \F \lra \E \lra 0,
\]
\[
\E = \Coker(\psi), \quad \psi \in \Hom(\O(-3) \oplus \O(-1), \O \oplus \O(1)), \quad
\f_{12} = \ell \psi_{12}, \quad \f_{22} = \ell \psi_{22}.
\]
According to loc.cit., $\E$ gives a point in $\M(5,3)$.
Let $\E'$ be the image of $\F'$ in $\E$. If $\E' \neq \E$, then $\pp(\E') \le 1/2$,
forcing $\pp(\F') \le 1/2$. Assume that $\E'=\E$. The sheaf $\F' \cap \O_L(-1)$
is zero because $\F'$ was assumed to have multiplicity at most $5$.
Thus the above extension splits, hence, by loc.cit., we have the formula
\[
\h^0(\F \tensor \Om^1(1)) = \h^0(\O_L \tensor \Om^1) + \h^0(\E \tensor \Om^1(1)) = 4,
\]
which is absurd.

Finally, if $\f_{12}$ divides $\f_{22}$, then $\F$ is stable equivalent to $\O_Q(1) \oplus \O_C$
for a quartic curve $Q$ and a conic curve $C$ in $\P^2$.
\end{proof}

\noi
Denote $\W_4 = \Hom(\O(-3) \oplus \O(-2), \O \oplus \O(1))$ and let $W_4 \subset \W_4$
be the subset of injective morphisms $\f$ for which $\f_{12}$ is non-zero and divides
neither $\f_{11}$ nor $\f_{22}$.
Let $W_{41} \subset \W_4$ be the subset of injective morphism $\f$ for which
$\f_{12}$ is non-zero and divides either $\f_{11}$ or $\f_{22}$.
The algebraic group
\[
G_4 = (\Aut(\O(-3) \oplus \O(-2)) \times \Aut(\O \oplus \O(1)))/\C^*
\]
acts by conjugation on $\W_4$ and leaves $W_4$ and $W_{41}$ invariant.
Let $X_4$ and $X_{41}$ be the images of these sets under the map $\f \mapsto [\Coker(\f)]$.

\begin{prop}
\label{5.2}
The semi-stable sheaves on $\P^2$ that are stable-equivalent to $\O_C \oplus \O_Q(1)$
for some conic curve $C$ and quartic curve $Q$ are precisely the cokernels of morphisms
in $W_{32}$, or in $W_{32}^\D$, or in $W_{41}$.
In particular, $X_{41} = X_{32}$ and all sheaves giving points in $X_4$ are stable.
\end{prop}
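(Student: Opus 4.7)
The plan is to establish the main equality (first sentence) via both inclusions and then to deduce the two ``in particular'' consequences.

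For the forward inclusion, Proposition \ref{4.5} already exhibits the cokernels of morphisms in $W_{32}$ and $W_{32}^\D$ as extensions whose factors are $\O_C$ and $\O_Q(1)$, hence as sheaves stable-equivalent to $\O_C \oplus \O_Q(1)$. For $W_{41}$, I would first use an automorphism in $G_4$ to bring $\f$ into one of the normal forms $\f_{22}=0$ or $\f_{11}=0$: when $\f_{12}\mid\f_{22}$ (so that $\f_{22}/\f_{12}\in V^*$), a suitable automorphism of $\O \oplus \O(1)$ kills $\f_{22}$, and dually for $\f_{11}$. In the case $\f_{22}=0$, the short exact sequences
\[
0 \to \O(-2) \to \O(-3) \oplus \O(-2) \to \O(-3) \to 0, \qquad 0 \to \O \to \O \oplus \O(1) \to \O(1) \to 0
\]
are compatible with $\f$: the sub $\O(-2)$ maps into the sub $\O$ via $\f_{12}$, and the induced map on quotients $\O(-3) \to \O(1)$ is $\f_{21}$. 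The snake lemma then produces the extension
\[
0 \to \O_C \to \Coker(\f) \to \O_Q(1) \to 0,
\]
where $C$ and $Q$ are the conic and quartic curves defined by $\f_{12}$ and $\f_{21}$. The case $\f_{11}=0$ is dual. This argument is essentially the one already used at the end of the proof of Proposition \ref{5.1}.

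For the converse, I would split into cases according to the pair $(\h^0(\F(-1)),\h^1(\F))$. If $\h^0(\F(-1))=0$, then $\F$ cannot contain a copy of $\O_Q(1)$, because $\h^0(\O_Q)=1$; since $\F$ must contain one of its Jordan--H\"older factors, it fits in an extension $0 \to \O_C \to \F \to \O_Q(1) \to 0$, and Proposition \ref{4.5}(ii) yields $\f \in W_{32}$. The dual case $\h^1(\F)=0$ gives $\f \in W_{32}^\D$ by Proposition \ref{4.5}(i). In the remaining case both invariants equal $1$ and Proposition \ref{5.1} supplies a resolution with $\f \in \W_4$ and $\f_{12}\neq 0$; the required $\f' \in W_{41}$ is constructed explicitly by choosing $\f'_{11}=\f'_{22}=0$ with $\f'_{12}$ a conic equation for $C$ and $\f'_{21}$ a quartic equation for $Q$ when $\F$ is the direct sum, and by an analogous normal form ($\f'_{11}=0$ or $\f'_{22}=0$) for non-split extensions carrying $\O_Q(1)$ or $\O_C$ as a subsheaf.

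The equality $X_{41}=X_{32}$ is then immediate, since both sets parametrise the stable-equivalence classes $[\O_C \oplus \O_Q(1)]$. For stability of sheaves in $X_4$ I would argue by contradiction: if $\f \in W_4$ gave a properly semi-stable $\Coker(\f)$, then Remark \ref{3.3}, together with the incompatibility between the cohomological invariants of $X_0$ or $X_1$ and those of $X_4$, would force $\Coker(\f)$ to be stable-equivalent to $\O_C \oplus \O_Q(1)$. By the main claim, $\Coker(\f)=\Coker(\f')$ for some $\f' \in W_{32} \cup W_{32}^\D \cup W_{41}$; but the invariants $\h^0(\F(-1))=\h^1(\F)=1$ forced by $\f \in W_4$ (Proposition \ref{5.1}) rule out $W_{32}$ and $W_{32}^\D$ by Proposition \ref{4.3}, so $\f' \in W_{41} \subset \W_4$. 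Since the resolution of $\F$ in $\W_4$ is canonical up to the action of $G_4$ (this follows from the Beilinson construction used in the proof of Proposition \ref{5.1}), the morphisms $\f$ and $\f'$ must lie in the same $G_4$-orbit, contradicting the manifest $G_4$-invariance and disjointness of $W_4$ and $W_{41}$. The principal technical obstacle is the third subcase of the converse inclusion, where $\F$ is a non-split extension with both cohomological invariants equal to $1$: producing the required $W_{41}$-resolution directly requires tracking the extension class through the Beilinson construction rather than relying on the direct-sum shortcut.
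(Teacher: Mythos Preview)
Your argument is correct and reaches the same conclusion as the paper, but you overcomplicate the third case and miss that your ``analogous normal form'' is just the horseshoe lemma.

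The paper's proof is much shorter than yours. It does not split first on $(\h^0(\F(-1)),\h^1(\F))$; instead it starts directly from an extension $0 \to \O_Q(1) \to \F \to \O_C \to 0$ (one of the two Jordan--H\"older factors is always a subsheaf) and then splits on $\h^1(\F)\in\{0,1\}$. If $\h^1(\F)=0$, Proposition~\ref{4.5}(i) gives $\f\in W_{32}^\D$; if $\h^1(\F)=1$, the horseshoe lemma applied to this extension and to the length-one resolutions $0\to\O(-3)\to\O(1)\to\O_Q(1)\to 0$ and $0\to\O(-2)\to\O\to\O_C\to 0$ produces in one step a resolution $0\to\O(-3)\oplus\O(-2)\to\O\oplus\O(1)\to\F\to 0$ with $\f_{11}=0$, hence $\f\in W_{41}$. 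The other extension direction is handled dually. The forward inclusion and the ``in particular'' clauses are left implicit.

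Your Case~3 detour through Proposition~\ref{5.1} and the Beilinson construction is unnecessary: what you call the ``analogous normal form ($\f'_{11}=0$ or $\f'_{22}=0$) for non-split extensions'' \emph{is} the horseshoe lemma, and it applies uniformly to split and non-split extensions alike. So the ``principal technical obstacle'' you flag at the end does not exist. Two minor points you leave implicit: that the invariants $\h^0(\F(-1))$ and $\h^1(\F)$ are each at most $1$ follows from $\h^0(\O_C(-1))=\h^1(\O_C)=0$ and $\h^0(\O_Q)=\h^1(\O_Q(1))=1$; and your stability argument for $X_4$ via uniqueness of the $\W_4$-resolution up to $G_4$ is correct and in fact more explicit than what the paper writes.
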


\begin{proof}
Assume that we have an extension
\[
0 \lra \O_Q(1) \lra \F \lra \O_C \lra 0.
\]
Then $\h^1(\F) =0$ or $1$. In the first case, as seen at \ref{4.5}(i), $\F$ is the cokernel
of some morphism in $W_{32}^\D$.
In the second case we can apply the horseshoe lemma to find $\f \in W_{41}$
such that $\F \isom \Coker(\f)$.
The dual case in which $\F$ is an extension of $\O_Q(1)$ by $\O_C$ is analogous.
\end{proof}

\begin{prop}
\label{5.3}
There exists a geometric quotient of $W_4$ by $G_4$, which is a proper open subset
of a tower of bundles with fibre $\P^{21}$ and base a fibre bundle over $\P^5$ with fibre $\P^6$.
Moreover, $W_4/G_4$ is isomorphic to $X_4$.
In particular, $X_4$ has codimension $5$.
\end{prop}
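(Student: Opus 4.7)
The plan is to mimic the proofs of Propositions~\ref{4.1} and~\ref{4.6}: construct the quotient $W_4/G_4$ via the Drézet--Trautmann GIT for morphisms between direct sums of line bundles, identify its tower-of-bundles structure, and then match it with $X_4$ through the categorical quotient argument from the proof of Proposition~\ref{2.2}.

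The first step is to analyse the $G_4$-action on $\W_4$ explicitly. An automorphism of $\O(-3)\oplus\O(-2)$ is lower triangular with diagonal scalars $a,b\in\C^*$ and off-diagonal entry a linear form $u\in V^*$; an automorphism of $\O\oplus\O(1)$ is analogous, with scalars $\alpha,\delta\in\C^*$ and off-diagonal entry $\gamma\in V^*$. A direct calculation of $g_2\f g_1^{-1}$ shows that $\f_{12}$ is multiplied by a non-zero scalar, $\f_{11}$ is rescaled modulo $\f_{12}V^*$, and $\f_{22}$ is rescaled modulo $V^*\f_{12}$. Consequently the assignment $\f\mapsto([\f_{12}],[\f_{11}\bmod\f_{12}V^*])$ is $G_4$-invariant on the open subset $\{\f\in\W_4 : \f_{12}\neq 0,\ \f_{12}\nmid\f_{11}\}$ and takes values in a $\P^6$-bundle $B$ over $\P^5=\P(\SS^2V^*)$. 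Concretely, $B=\P(\E)$, where $\E$ is the rank-seven cokernel of the multiplication morphism $\O_{\P^5}(-1)\tensor V^*\lra \SS^3V^*\tensor\O_{\P^5}$, which is fibrewise injective since multiplication by a non-zero quadratic form is injective on $V^*$.

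Next, I would choose a polarisation $\L$ of $\W_4$ in the sense of \cite{drezet-trautmann} for which $\W_4^{\ss}(\L)$ is precisely the set of morphisms satisfying $\f_{12}\neq 0$, $\f_{12}\nmid\f_{11}$, and $\f_{12}\nmid\f_{22}$, and apply theorem 9.3 op.cit.\ (as was done in the proof of~\ref{4.6}) to obtain a geometric quotient $\W_4^{\ss}(\L)/G_4$ with the structure of a $\P^{21}$-bundle over $B$. The dimension count $\dim\W_4-\dim G_4=41-9=32=5+6+21$ is consistent with this tower. By Proposition~\ref{5.2}, $W_4$ is the open $G_4$-invariant subset of $\W_4^{\ss}(\L)$ consisting of injective morphisms, so $W_4/G_4$ exists as a proper open subset of the tower. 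Finally, the Beilinson spectral sequence argument from the proof of~\ref{2.2}, combined with the resolution supplied by Proposition~\ref{5.1}, shows that $W_4\to X_4$ is a categorical quotient. Since all sheaves in $X_4$ are stable (Proposition~\ref{5.2}), its fibres are the $G_4$-orbits and uniqueness of the categorical quotient gives $W_4/G_4\isom X_4$. The codimension of $X_4$ in $\M(6,3)$ is then $37-32=5$.

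The main obstacle lies in the second step: verifying that the polarisation can be chosen so that the three conditions above cut out exactly $\W_4^{\ss}(\L)$, and checking that the ``shift'' sub-bundle of $\W_4$ over $B$, analogous to $\Sigma$ in the proof of~\ref{4.1}, has constant rank, so that the quotient descends to a genuine $\P^{21}$-bundle. A careful analysis of the orbit structure of $\f_{21}$ under the stabiliser of $([\f_{12}],[\f_{11}])$---whose action involves the cross term $\gamma\f_{12}u$ in addition to the linear shifts by $\gamma\f_{11}$ and $\f_{22}u$---is needed to pin this down.
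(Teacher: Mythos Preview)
Your overall strategy is sound, but the paper executes it differently and more directly than you propose. Rather than invoking the Drézet--Trautmann polarisation machinery (as in \ref{4.6}), the paper follows the elementary descent argument of 2.2.5 \cite{mult_five} (closer in spirit to \ref{4.1}). Concretely, it first quotients the set $U\subset\Hom(\O(-2),\O\oplus\O(1))$ of second columns $(\f_{12},\f_{22})$ with $\f_{12}\neq 0$, $\f_{12}\nmid\f_{22}$ by $G=\Aut(\O\oplus\O(1))$ acting on the left, obtaining the $\P^6$-bundle over $\P^5$ directly. Then $W_4'$, the set of $\f$ with second column in $U$, is a trivial vector bundle over $U$ with fibre $\Hom(\O(-3),\O\oplus\O(1))$, and the sub-bundle $\Sigma$ is simply $\{\f_{11}=\f_{12}u,\ \f_{21}=\f_{22}u : u\in V^*\}$, manifestly of constant rank $3$. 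The quotient $W_4'/\Sigma$ is $G$-linearised and descends to a rank-$22$ bundle $E$ over $U/G$; its projectivisation $\P(E)$ is the geometric quotient of $W_4'\setminus\Sigma$ by the full group $G_4$, and $W_4$ sits inside as an open subset.

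This two-step procedure (quotient by one factor of $G_4$, then the other) is exactly what dissolves the obstacle you flag: the cross term $\gamma\f_{12}u$ never appears, because the $\gamma$-shift has already been absorbed into the $G$-quotient before one considers the $u$-shift. So the constant-rank verification becomes trivial, and no polarisation needs to be found. Your base uses $(\f_{12},\f_{11})$ instead of $(\f_{12},\f_{22})$; this is harmless by symmetry, but note it forces you to treat the remaining data $(\f_{21},\f_{22})$ with the action of $\Aut(\O\oplus\O(1))$ still live, which is where your cross term comes from.

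For the identification $W_4/G_4\simeq X_4$, the paper does not appeal to \ref{2.2} (whose Beilinson argument is tailored to $\h^0(\F(-1))=0$, $\h^1(\F)=0$) but to the argument of Proposition~\ref{7.1}, which constructs $\f$ from the Beilinson spectral sequence under the hypotheses $\h^0(\F(-1))=1$, $\h^1(\F)=1$ relevant here. The logical role is the same, but the correct reference for this stratum is \ref{7.1}.
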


\begin{proof}
The linear algebraic group $G= \Aut(\O \oplus \O(1))$ acts on the vector space
$\U = \Hom(\O(-2), \O \oplus \O(1))$ by left-multiplication.
Consider the open $G$-invariant subset $U \subset \U$ of morphisms $\psi$
for which $\psi_{11}$ is non-zero and does not divide $\psi_{21}$.
Consider the fibre bundle with base $\P(\SS^2 V^*)$ and fibre $\P(\SS^3 V^*/V^* q)$
at any point of the base represented by $q \in \SS^2 V^*$.
Clearly this fibre bundle is the geometric quotient of $U$ modulo $G$.

Consider the open $G_4$-invariant subset $W_4' \subset \W_4$ of morphisms $\f$
whose restriction to $\O(-2)$ lies in $U$.
Clearly $W_4'$ is the trivial vector bundle over $U$ with fibre $\Hom(\O(-3), \O \oplus \O(1))$.
Consider the sub-bundle $\Sigma \subset W_4'$ given by the conditions
\[
\f_{11} = \f_{12} u, \qquad \f_{21} = \f_{22} u, \qquad u \in \Hom(\O(-3), \O(-2)).
\]
As at 2.2.5 \cite{mult_five}, the quotient bundle $W_4'/\Sigma$ is $G$-linearised, 
hence it descends to a vector bundle $E$ over $U/G$.
Its projectivisation $\P(E)$ is the geometric quotient of $W_4' \setminus \Sigma$
modulo $G_4$.
Notice that $W_4$ is a proper open $G_4$-invariant subset of $W_4' \setminus \Sigma$.
Thus $W_4/G_4$ exists as a proper open subset of $\P(E)$.

The argument at proposition \ref{7.1} below shows that the canonical map $W_4 \to X_4$
is a geometric quotient map.
\end{proof}

\begin{prop}
\label{5.4}
The generic sheaves in $X_4$ have the form $\O_C(3)(-P_1 - \cdots -P_6)$,
where $C \subset \P^2$ is a smooth sextic curve and $P_i$ are six distinct points on $C$
which lie on a conic curve and no four of which lie on a line.
In particular, $X_4$ is contained in the closure of $X_3$ and also in the closure of $X_3^\D$.
\end{prop}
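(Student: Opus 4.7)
My plan is as follows. For a generic $\f \in W_4$, write
\[
\f = \left[ \begin{array}{cc} f & q \\ h & g \end{array} \right]
\]
with $f, g \in \SS^3 V^*$, $q \in \SS^2 V^*$, $h \in \SS^4 V^*$. For generic $\f$ the sextic $C : \det(\f) = fg - qh = 0$ is smooth, $q$ defines a smooth conic $Q$, and the forms $g$ and $q$ are coprime, so $V(q) \cap V(g)$ consists of six distinct points by B\'ezout. These points lie on $C$ because $\det(\f)$ vanishes where $q = g = 0$; call them $P_1, \ldots, P_6$.

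First I would construct a morphism $\F \to \O_C(3)$. Define $\O \oplus \O(1) \to \O(3)$ by $(s, t) \mapsto sg - tq$. Its composition with $\f$ sends $(a, b) \in \O(-3) \oplus \O(-2)$ to $(af + bq)g - (ah + bg)q = a(fg - qh) = a \det(\f)$, which vanishes modulo the equation of $C$, so we obtain an induced map $\F \to \O_C(3)$. Because $\F$ is the cokernel of a matrix factorisation of $\det(\f)$, it is a maximal Cohen-Macaulay $\O_C$-module, hence locally free of rank one on the smooth curve $C$. A fibrewise analysis shows that this map is injective, and that it fails to be an isomorphism precisely at the six points where $(g(p), q(p)) = 0$; at each such point the map drops rank by one. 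A comparison of Hilbert polynomials then gives $\F \isom \O_C(3)(-P_1 - \cdots - P_6)$. Since the $P_i$ lie on the smooth conic $Q$, no three of them are colinear (a line meets a smooth conic in at most two points), so \textit{a fortiori} no four are colinear.

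For the inclusion $X_4 \subset \overline{X}_3$, fix a generic point of $X_4$ represented by $\F = \O_C(3)(-P_1 - \cdots - P_6)$ as above. The locus of 6-tuples of points of $C$ lying on a conic curve is a proper closed subset of the symmetric product $C^{(6)}$, so I may choose a one-parameter family $(P_1(t), \ldots, P_6(t))$ of 6-tuples of distinct points of $C$ with $P_i(0) = P_i$ such that for $t \neq 0$ the six points do not lie on any conic. The resulting flat family of semi-stable sheaves $\O_C(3)(-P_1(t) - \cdots - P_6(t))$ in $\M(6, 3)$ has members in $X_3$ for $t \neq 0$ by Proposition \ref{4.4}, so the limit $\F$ lies in $\overline{X}_3$. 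By the introduction, $X_4$ is preserved by the duality automorphism while $X_3$ is mapped to $X_3^\D$; hence $X_4 = X_4^\D \subset (\overline{X}_3)^\D = \overline{X_3^\D}$.

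The main technical point is the first step: the fibrewise analysis identifying the locus of base points of the cokernel of $\F \to \O_C(3)$ as $V(q) \cap V(g) \cap C$, which requires controlling the rank behaviour of $\f \pmod{\det(\f)}$ on $C$ carefully, and then matching Hilbert polynomials to conclude that this cokernel is reduced of length six.
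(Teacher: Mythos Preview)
Your proposal is correct and follows essentially the same approach as the paper. The identification $\F \isom \O_C(3)(-P_1-\cdots-P_6)$ via the ideal $(\f_{12},\f_{22})$ was already established in the proof of Proposition~\ref{5.1} (as $\F = \J_Z(3)$), so the paper simply cites this, whereas you reconstruct it by the explicit map $(s,t)\mapsto sg-tq$; both arguments are the same in substance. The paper additionally proves the converse---that every sheaf $\O_C(3)(-P_1-\cdots-P_6)$ with the stated conditions arises from some $\f\in W_4$, including the case where the conic through the $P_i$ is reducible---which you omit, but this extra content is not required for the proposition as stated. Your deformation argument for $X_4\subset\overline{X}_3$ and the duality argument for $X_4\subset\overline{X_3^\D}$ match the paper's exactly.
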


\begin{proof}
Let $W_{40} \subset W_4$ be the subset of morphisms $\f$ satisfying the following conditions:
the curve given by the equation $\det(\f) = 0$ is smooth, $\f_{12}$ and $\f_{22}$ have no common factor
and the curves they determine meet at six distinct points $P_1, \ldots, P_6$.
Notice that no four among these points are colinear.
As already mentioned in the proof of \ref{5.1}, $\Coker(\f)$ is isomorphic to
$\O_C(3)(-P_1 - \cdots - P_6)$ if $\f \in W_{40}$.

Conversely, assume that we are given a sheaf as in the proposition.
Let $A$ be the conic curve passing through $P_1, \ldots, P_6$.
If $A$ is irreducible, then it is easy to find a cubic curve $B$ passing through $P_1, \ldots, P_6$
that does not contain $A$.
If $A$ is reducible, then $A$ is the union of two distinct lines $L_1$, $L_2$ and exactly three
points lie on each line, say $P_1, P_2, P_3$ lie on $L_1$ and $P_4, P_5, P_6$ lie on $L_2$.
Take $B$ to be the union of the lines $P_1 P_4$, $P_2 P_5$, $P_3 P_6$.
Choose $\f_{12} \in \SS^2 V^*$, $\f_{22} \in \SS^3 V^*$, $f \in \SS^6 V^*$ defining $A$, $B$, $C$.
We can find $\f_{11} \in \SS^3 V^*$, $\f_{21} \in \SS^4 V^*$
such that $f = \f_{11} \f_{22} - \f_{12} \f_{21}$.
Thus $(\f_{ij})$ represents a morphism in $W_{40}$ whose cokernel is isomorphic
to $\O_C(3)(-P_1 - \cdots - P_6)$.

To prove the inclusion $X_4 \subset \overline{X}_3$ fix a generic sheaf $\F$ in $X_4$
as in the proposition. Clearly we can find six distinct points $Q_i$ on $C$ that are not contained
in a conic curve such that $Q_i$ converges to $P_i$ for $1 \le i \le 6$.
According to \ref{4.4}, $\O_C(3)(-Q_1 - \cdots - Q_6)$ gives a point in $X_3$.
This point converges to $\F$ as $Q_i$ approach $P_i$.
By duality $X_4$ is also contained in the closure of $X_3^\D$.
\end{proof}


\section{The codimension $6$ stratum}

\begin{prop}
\label{6.1}
The sheaves $\F$ giving points in $\M(6,3)$ and satisfying the cohomological conditions
\[
\h^0(\F(-1))=1, \qquad \h^1(\F)=1, \qquad \h^0(\F \tensor \Om^1(1))=4,
\]
are precisely the sheaves having resolution of the form
\[
0 \lra \O(-3) \oplus \O(-2) \oplus \O(-1) \stackrel{\f}{\lra} \O(-1) \oplus \O \oplus \O(1) \lra \F \lra 0,
\]
\[
\f = \left[
\ba{ccc}
q_1 & \ell_1 & 0 \\
\star & \star & \ell_2 \\
\star & \star & q_2
\ea
\right],
\]
where $\ell_1, \ell_2 \in V^*$ are different from zero, $\ell_1$ does not divide $q_1$,
$\ell_2$ does not divide $q_2$.
\end{prop}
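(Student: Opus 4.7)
The plan is to prove the two implications separately, both leveraging the Beilinson argument developed in Proposition~\ref{5.1}.

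\emph{Forward direction.} Starting from $\F$ satisfying the three cohomological conditions, I repeat the argument of \ref{5.1} with $m = 4$, obtaining an intermediate resolution
\[
0 \lra \O(-2) \stackrel{\psi}{\lra} \O(-3) \oplus \O(-2) \oplus 4\O(-1) \stackrel{\f_0}{\lra} \O(-1) \oplus 4\O \lra \F \lra 0
\]
with $\psi = (0, 0, \psi_{31})^{\T}$, the entries of $\psi_{31}$ spanning $V^*$, and $(\f_0)_{13}=0$. Choosing a basis of $4\O(-1)$ so that $\psi_{31} = (X, Y, Z, 0)^{\T}$ and splitting $4\O(-1) = 3\O(-1) \oplus \O(-1)$, the cokernel of $\psi$ is $\O(-3) \oplus \O(-2) \oplus \Om^1(1) \oplus \O(-1)$. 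Exactly as in \ref{5.1} for the case $m = 3$, one then has $\Coker(\Om^1(1) \to 4\O) \isom \O \oplus \O(1)$: the Euler sequence $0 \to \Om^1(1) \to 3\O \to \O(1) \to 0$ together with $\Hom(\O(1), 4\O) = \Ext^1(\O(1), 4\O) = 0$ furnishes a canonical lift $\hat{\f} \colon 3\O \to 4\O$, and provided $\hat{\f}$ has full rank the cokernel is an extension of $\O$ by $\O(1)$, which splits since $\Ext^1(\O, \O(1)) = 0$. Substituting this cokernel for the $4\O$ on the right and removing $\Om^1(1)$ from the left produces the proposition's resolution, with $\f_{13}=0$ inherited from the vanishing of the $\O(-1) \to \O(-1)$ component of $\f_0$. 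The conditions $\ell_1 \neq 0$, $\ell_2 \neq 0$, $\ell_1 \nmid q_1$, $\ell_2 \nmid q_2$ are then forced by semi-stability of $\F$, since each violation would produce a subsheaf or quotient of slope strictly less than $1/2$.

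\emph{Converse direction.} Given a morphism $\f$ of the stated form, I verify the cohomological conditions and semi-stability by direct computation from the resolution. Bott's formulas applied summand-by-summand yield the Hilbert polynomial $6m + 3$ and the equalities $\h^0(\F(-1)) = 1$, $\h^1(\F) = 1$. Tensoring by $\Om^1(1)$, the $H^1$-terms on each side are one-dimensional (coming from the $\O(-1) \otimes \Om^1(1) = \Om^1$ summands), and the connecting map between them is scalar multiplication by $\f_{13}=0$; the long exact sequence therefore gives $\h^0(\F \otimes \Om^1(1)) = 4$. Semi-stability is verified by bounding the slope of a subsheaf $\F' \subset \F$ of multiplicity at most $5$, following the case analysis of \ref{5.1}: one distinguishes cases according to the divisibility relations among $\ell_1$, $\ell_2$, $q_1$, $q_2$, and the hypotheses $\ell_1 \nmid q_1$, $\ell_2 \nmid q_2$ are precisely what rules out the destabilising configurations.

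The main technical obstacle is the full-rank claim for $\hat{\f}$ in the forward direction, which is what makes $\Coker(\Om^1(1) \to 4\O) = \O \oplus \O(1)$ and thus permits the reduction to the proposition's resolution. A drop in rank would correspond to a smaller resolution incompatible with $\h^0(\F \otimes \Om^1(1)) = 4$, so the rank claim has to be tied back to the cohomological hypothesis. The semi-stability check in the converse direction is a secondary source of technical complication, but it is a routine adaptation of the case analysis in the proof of \ref{5.1}.
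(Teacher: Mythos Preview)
Your forward direction follows the paper's structure, but you misidentify the tool for the full-rank claim. The paper does not tie the isomorphism $\Coker(\Om^1(1) \to 4\O) \isom \O \oplus \O(1)$ to the cohomological hypothesis; it uses semi-stability. If the lift $\hat\f \colon 3\O \to 4\O$ had rank at most $2$, then after column operations the component $\Om^1(1) \to 4\O$ would land in a $2\O$ summand, and projecting onto the complementary $\O(-1) \oplus 2\O$ would exhibit a surjection from $\F$ onto the cokernel of a morphism $\O(-3) \oplus \O(-2) \oplus \O(-1) \to \O(-1) \oplus 2\O$, which destabilises $\F$. Your proposed cohomological route does not close the gap: the Beilinson construction already encodes $\h^0(\F \otimes \Om^1(1)) = 4$, so any resolution it outputs is automatically compatible with that value, and rank drop alone yields no contradiction there.

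For the converse, your cohomology computation (the connecting-map argument giving $\h^0(\F \otimes \Om^1(1)) = 4$ from $\f_{13}=0$) is correct and more explicit than the paper. Your semi-stability argument, however, diverges from the paper's and is not the routine adaptation of \ref{5.1} you suggest: the case analysis there hinges on the $\gcd$ of two entries in a $2 \times 2$ matrix, and the present $3 \times 3$ situation has no direct analogue. The paper instead applies the snake lemma to obtain an extension $0 \to \E \to \F \to \O_Z \to 0$, where $Z$ is the length-$2$ scheme cut out by $(q_1, \ell_1)$ and $\E$ has resolution $0 \to \O(-4) \oplus \O(-1) \to \O \oplus \O(1) \to \E \to 0$ with second column $(\ell_2, q_2)^{\T}$; by \cite{mult_six_one} this $\E$ is stable in $\M(6,1)$. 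A stable destabilising subsheaf $\F' \subset \F$ of multiplicity at most $5$ then satisfies $\pp(\E \cap \F') \le 0$ and hence $\chi(\F') \le 2$, which confines $\F'$ to one of $\M(1,1)$, $\M(1,2)$, $\M(2,2)$, $\M(3,2)$; each is ruled out by a short diagram chase showing that one of the hypotheses $\ell_i \neq 0$ or $\ell_i \nmid q_i$ would be violated.
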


\begin{proof}
Let $\F$ give a point in $\M(6,3)$ and satisfy the cohomological conditions from the proposition.
At \ref{5.1} we found the resolution
\[
0 \lra \O(-3) \oplus \O(-2) \oplus \O(-1) \oplus \Om^1(1) \stackrel{\f}{\lra} \O(-1) \oplus 4\O \lra \F \lra 0,
\]
with $\f_{13}=0$, $\f_{14}=0$. The cokernel of $\f_{24}$ is isomorphic to $\O \oplus \O(1)$,
otherwise $\F$ would have a destabilising quotient sheaf that is the cokernel of a morphism
\[
\O(-3) \oplus \O(-2) \oplus \O(-1) \lra \O(-1) \oplus 2\O
\]
(see 2.1.4 \cite{mult_five}). We obtain a resolution as in the proposition.
The conditions on $\ell_1, \ell_2, q_1, q_2$ follow from the semi-stability of $\F$.
For instance, if $\ell_1$ divided $q_1$, then $\F$ would have a destabilising quotient
sheaf of the form $\O_L(-1)$ for a line $L \subset \P^2$.

Conversely, assume that $\F = \Coker(\f)$ for some morphism $\f$ as in the proposition.
Let $Z \subset \P^2$ be the zero-dimensional scheme of length $2$ given by the ideal
$(q_1, \ell_1)$. From the snake lemma we get an extension
\[
0 \lra \E \lra \F \lra \O_Z \lra 0,
\]
where $\E$ has a resolution
\[
0 \lra \O(-4) \oplus \O(-1) \stackrel{\psi}{\lra} \O \oplus \O(1) \lra \E \lra 0
\]
in which $\psi_{12}= \ell_2$, $\psi_{22}=q_2$.
According to 6.2 \cite{mult_six_one}, $\E$ gives a point in $\M(6,1)$.
Assume that there is a destabilising subsheaf $\F' \subset \F$.
We may assume that $\F'$ is stable and that it has multiplicity at most $5$.
Thus $\E \cap \F'$ is a proper subsheaf of $\E$, forcing $\pp(\E \cap \F') \le 0$.
It follows that $\F'$ gives a point in one of the following moduli spaces:
$\M(1,1)$, $\M(1,2)$, $\M(2,2)$, $\M(3,2)$.
In the first case we have a commutative diagram
\[
\xymatrix
{
0 \ar[r] & \O(-1) \ar[r] \ar[d]^-{\b} & \O \ar[r] \ar[d]^-{\a} & \F' \ar[r] \ar[d] & 0 \\
0 \ar[r] & \O(-3) \oplus \O(-2) \oplus \O(-1) \ar[r] & \O(-1) \oplus \O \oplus \O(1) \ar[r] & \F \ar[r] & 0
}
\]
in which $\a$ is injective because it is injective on global sections.
Thus
\[
\b \sim \left[
\ba{c}
0 \\ 0 \\ 1
\ea
\right] \qquad \text{and} \qquad \a \sim \left[
\ba{c}
0 \\ 1 \\ 0
\ea
\right] \quad \text{or} \quad \a \sim \left[
\ba{c}
0 \\ 0 \\ u
\ea
\right].
\]
Thus $\ell_2 = 0$ or $\ell_2$ divides $q_2$, contradicting our hypothesis.
In the second case we have a similar diagram in which $\b$ must be zero, hence $\a$
must factor through $\F'$, which is absurd.
If $\F'$ gives a point in $\M(2,2)$, then we have a diagram
\[
\xymatrix
{
0 \ar[r] & 2\O(-1) \ar[r] \ar[d]^-{\b} & 2\O \ar[r] \ar[d]^-{\a} & \F' \ar[r] \ar[d] & 0 \\
0 \ar[r] & \O(-3) \oplus \O(-2) \oplus \O(-1) \ar[r] & \O(-1) \oplus \O \oplus \O(1) \ar[r] & \F \ar[r] & 0
}.
\]
Since $\b$ cannot be injective, we see that $\a$ is not injective, so we my assume that
\[
\a = \left[
\ba{cc}
0 & 0 \\
0 & 0 \\
u_1 & u_2
\ea
\right], \qquad \qquad \b = \left[
\ba{cc}
0 & 0 \\
0 & 0 \\
1 & 0
\ea
\right],
\]
for some linearly independent one-forms $u_1, u_2$.
We obtain the contradictory conclusion that $\ell_2 = 0$.
Assume, finally, that $\F'$ gives a point in $\M(3,2)$.
We have a diagram
\[
\xymatrix
{
0 \ar[r] & \O(-2) \oplus \O(-1) \ar[r] \ar[d]^-{\b} & 2\O \ar[r] \ar[d]^-{\a} & \F' \ar[r] \ar[d] & 0 \\
0 \ar[r] & \O(-3) \oplus \O(-2) \oplus \O(-1) \ar[r] & \O(-1) \oplus \O \oplus \O(1) \ar[r] & \F \ar[r] & 0
}
\]
in which either $\a$ and $\b$ are both injective or
\[
\a \sim \left[
\ba{cc}
0 & 0 \\
0 & 0 \\
u_1 & u_2
\ea
\right] \qquad \text{and} \qquad \b \sim \left[
\ba{cc}
0 & 0 \\
1 & 0 \\
0 & 0
\ea
\right] \quad \text{or} \quad \b \sim \left[
\ba{cc}
0 & 0 \\
0 & 0 \\
u & 0
\ea
\right].
\]
Thus $\ell_1= 0$ or $\ell_2= 0$, which yields a contradiction.
\end{proof}

\noi
Denote $\W_5 = \Hom(\O(-3) \oplus \O(-2) \oplus \O(-1), \O(-1) \oplus \O \oplus \O(1))$
and let $W_5 \subset \W_5$ be the subset of morphism satisfying the conditions of \ref{6.1}.
Denote by $G_5$ the canonical group acting by conjugation $\W_5$.
Let $X_5 \subset \M(6,3)$ be the image of $W_5$ under the  map
$\f \mapsto [\Coker(\f)]$.

\begin{prop}
\label{6.2}
There exists a geometric quotient of $W_5$ modulo $G_5$, which is a proper open subset
of a fibre bundle with base $\Hilb(2) \times \Hilb(2)$ and fibre $\P^{23}$.
Moreover, $W_5/G_5$ is isomorphic to $X_5$.
In particular, $X_5$ has codimension $6$.
\end{prop}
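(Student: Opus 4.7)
The plan follows the strategy of \ref{4.1} and \ref{5.3}. First, I would construct a $G_5$-equivariant morphism $\mu \colon W_5 \to \Hilb(2) \times \Hilb(2)$ by sending $\f$ to the pair $(Z_1, Z_2)$, where $Z_1 \subset \P^2$ is the length-$2$ scheme cut out by the ideal $(q_1, \ell_1)$ and $Z_2 \subset \P^2$ is the length-$2$ scheme cut out by $(\ell_2, q_2)$. The hypotheses $\ell_1 \nmid q_1$ and $\ell_2 \nmid q_2$ from \ref{6.1} guarantee that these ideals do define schemes of length exactly $2$. Each factor of $\Hilb(2)$ arises as a GIT quotient of the space of pairs $(q, \ell) \in \SS^2 V^* \oplus V^*$, with $\ell \neq 0$ and $\ell \nmid q$, by the subgroup of $G_5$ that acts on the top row (respectively, on the right column) of $\f$.

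Following the blueprint of \ref{4.1}, I would enlarge $W_5$ to the $G_5$-invariant locally closed subset $W_5' \subset \W_5$ given by the conditions of \ref{6.1} minus the injectivity condition, and introduce the $G_5$-invariant subset $\Sigma \subset W_5'$ of morphisms whose star block is ``trivial'', i.e.\ of the form
\[
\left[ \ba{cc} \f_{21} & \f_{22} \\ \f_{31} & \f_{32} \ea \right] = \left[ \ba{c} w \\ s \ea \right] \left[ \ba{cc} q_1 & \ell_1 \ea \right] - \left[ \ba{c} \ell_2 \\ q_2 \ea \right] \left[ \ba{cc} v & u \ea \right]
\]
for some $w, u \in V^*$ and $s, v \in \SS^2 V^*$. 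These are precisely the star blocks produced by applying to the zero star block the off-diagonal automorphism parameters of $G_5$ that fix $(q_1, \ell_1, \ell_2, q_2)$. Viewing $W_5'$ as a trivial vector bundle of rank $41$ over the affine parameter space of $(q_1, \ell_1, \ell_2, q_2)$, the subset $\Sigma$ becomes a linear sub-bundle whose fibres are the images of an $18$-dimensional parameter space under the above map; the kernel of the parametrisation is one-dimensional, spanned by $(v, u, w, s) = (q_1, \ell_1, \ell_2, q_2)$, so $\Sigma$ has constant rank $17$ and $W_5'/\Sigma$ has rank $24$.

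As in \ref{4.1} and 2.2.5 of \cite{mult_five}, the quotient bundle descends to a vector bundle $E$ of rank $24$ over $\Hilb(2) \times \Hilb(2)$, whose projectivisation $\P(E)$ is the geometric quotient of $W_5' \setminus \Sigma$ modulo $G_5$. Since $W_5$ is a $G_5$-invariant proper open subset of $W_5' \setminus \Sigma$, the geometric quotient $W_5/G_5$ exists as a proper open subset of $\P(E)$. The argument of \ref{2.2} shows that the canonical map $W_5 \to X_5$ is a categorical quotient; since all sheaves in $X_5$ are stable (as the analysis of possible destabilising subsheaves in the proof of \ref{6.1} rules out every strictly proper subsheaf of slope at least $1/2$), the $G_5$-orbits coincide with the fibres of $W_5 \to X_5$, and uniqueness of categorical quotients yields $W_5/G_5 \isom X_5$. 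The codimension assertion follows from $\dim X_5 = 8 + 23 = 31 = 37 - 6$.

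The main obstacle is verifying the constant-rank property of $\Sigma$ uniformly across $(Z_1, Z_2) \in \Hilb(2) \times \Hilb(2)$, in particular at configurations where $Z_1$ and $Z_2$ share a common point or contain non-reduced components. This amounts to showing that the kernel of the parametrisation map remains one-dimensional at every such base point, and mirrors the kernel-dimension analysis carried out for $K(\f_{11}, \f_{22})$ in \ref{4.1}.
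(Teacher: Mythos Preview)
Your approach is correct and essentially identical to the paper's: both identify the base $\Hilb(2)\times\Hilb(2)$ via the quadruple $(q_1,\ell_1,q_2,\ell_2)$ and the fibre $\P^{23}$ via the star block modulo the same $17$-dimensional subspace $\Sigma$, with the paper citing 3.2.3 of \cite{mult_five} in lieu of the constant-rank verification you correctly flag (and whose one-dimensional kernel you correctly identify). For the isomorphism $W_5/G_5\isom X_5$ the paper invokes the Beilinson-spectral-sequence construction of \ref{7.1} rather than \ref{2.2}, but this is a minor variation on the same idea.
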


\begin{proof}
The construction of $W_5/G_5$ is nearly identical to the construction of the quotient
at 3.2.3 \cite{mult_five}. The set of pairs $(q_1, \ell_1)$ is acted upon by
$\Aut(\O(-3) \oplus \O(-2))$ and the quotient is $\Hilb(2)$.
Thus the base $\Hilb(2) \times \Hilb(2)$ accounts for the set of quadruples $(q_1, \ell_1, q_2, \ell_2)$
modulo the action of the appropriate group.
The fibre accounts for the space $\Hom(\O(-3) \oplus \O(-2), \O \oplus \O(1))$ modulo the
subspace of morphisms represented by matrices of the form
\[
\left[
\ba{c}
v_1 \\ v_2
\ea
\right] \left[
\ba{cc}
q_1 & \ell_1
\ea
\right] + \left[
\ba{c}
\ell_2 \\ q_2
\ea
\right] \left[
\ba{cc}
u_1 & u_2
\ea
\right], \qquad v_1, u_2 \in V^*, \quad v_2, u_1 \in \SS^2 V^*.
\]
The argument at \ref{7.1} below shows that the canonical bijective morphism
$W_5/G_5 \to X_5$ is an isomorphism.
\end{proof}

\begin{prop}
\label{6.3}
The sheaves giving points in $X_5$ are precisely the extension sheaves of the form
\[
0 \lra \E \lra \F \lra \O_Z \lra 0,
\]
that satisfy the conditions $\h^1(\F)=1$, $\h^1(\F(1))=0$.
Here $\E$ denotes an arbitrary sheaf giving a point in the stratum $X_5$ of $\M(6,1)$
(cf. 6.2 \cite{mult_six_one}) and $Z \subset \P^2$ is an arbitrary zero-dimensional scheme
of length $2$.

The generic sheaves in $X_5$ have the form $\O_C(2)(P_1 + Q_1 - P_2 - Q_2)$,
where $C \subset \P^2$ is a smooth sextic curve and $P_1, Q_1, P_2, Q_2$
are four distinct points on $C$.
In particular, $X_5$ is contained in the closure of $X_2$.
\end{prop}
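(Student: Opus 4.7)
The plan is to address the three assertions in order. For the forward direction of the extension description, I would revisit the snake-lemma construction already used in the proof of \ref{6.1}: starting from $\f \in W_5$, the entries $\ell_2, q_2$ in the third column determine a length-one resolution $0 \lra \O(-4) \oplus \O(-1) \stackrel{\psi}{\lra} \O \oplus \O(1) \lra \E \lra 0$ with $\psi_{12}=\ell_2$, $\psi_{22}=q_2$, while the pair $(q_1,\ell_1)$ in the first row cuts out the length-$2$ scheme $Z \subset \P^2$. The hypotheses $\ell_2 \neq 0$ and $\ell_2 \nmid q_2$ are exactly the conditions of 6.2 \cite{mult_six_one} that place $\E$ in the stratum $X_5 \subset \M(6,1)$, and the snake-lemma extension so obtained is the one sought. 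The identities $\h^1(\F)=1$ and $\h^1(\F(1))=0$ come from \ref{6.1} together with Bott's formulas applied to the resolution.

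For the converse, I would start from an extension $0 \lra \E \lra \F \lra \O_Z \lra 0$ satisfying the stated hypotheses. A Hilbert polynomial computation gives $\PP_\F(m) = 6m+3$, and the long exact sequences in cohomology, combined with the cohomology of $\E$ recorded in 6.2 \cite{mult_six_one}, will yield $\h^0(\F(-1))=1$ and $\h^0(\F \tensor \Om^1(1))=4$. Semi-stability would be checked by a direct slope estimate in the spirit of the proof of \ref{6.1}: any destabilising $\F' \subset \F$ meets the stable sheaf $\E$ in a proper subsheaf, and bounding the quotient $\F'/(\F' \cap \E) \subset \O_Z$ forces $\pp(\F') \le 1/2$. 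Proposition \ref{6.1} then supplies a resolution of $\F$ and hence a morphism in $W_5$.

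For the generic form, I would use the description of generic sheaves of $X_5 \subset \M(6,1)$ from \cite{mult_six_one} as $\O_C(2)(-P_2-Q_2)$ for a smooth sextic $C$ and two distinct points $P_2, Q_2 \in C$. Choosing two further distinct points $P_1, Q_1 \in C$ and setting $Z=\{P_1, Q_1\}$, the divisor sequence on $C$
\[
0 \lra \O_C(2)(-P_2-Q_2) \lra \O_C(2)(P_1+Q_1-P_2-Q_2) \lra \O_Z \lra 0
\]
exhibits $\F = \O_C(2)(P_1+Q_1-P_2-Q_2)$ as an extension of the claimed form, and a direct cohomology computation on $C$ will confirm $\h^1(\F)=1$ and $\h^1(\F(1))=0$ for generic choices of the four points.

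For the inclusion $X_5 \subset \overline{X}_2$, I would fix such a generic $\F$, pick an auxiliary point $R \in C$ avoiding both lines $P_1 Q_1$ and $P_2 Q_2$, and take a family of distinct pairs $(A_3(t), A_6(t))$ on $C$ both converging to $R$. Setting $A_1=P_1$, $A_2=Q_1$, $A_4=P_2$, $A_5=Q_2$, the sheaves
\[
\O_C(2)(A_1+A_2+A_3(t)-A_4-A_5-A_6(t))
\]
satisfy the generic conditions of \ref{4.2} and hence lie in $X_2$; as $t \to 0$ the divisor $A_3(t)-A_6(t)$ degenerates to $0$ on $C$, so the family converges in $\M(6,3)$ to $\F$. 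The main technical obstacle is the converse in the second paragraph: controlling the cohomology of $\F$ from the extension data tightly enough to force $\h^0(\F \tensor \Om^1(1)) = 4$ rather than $3$ (which would instead place $\F$ in $X_4$), and pushing the slope estimate through to establish semi-stability.
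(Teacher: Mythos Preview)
Your forward direction, the description of generic sheaves, and the degeneration argument for $X_5 \subset \overline{X}_2$ all match the paper's approach closely.

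The converse direction is where you and the paper diverge, and the obstacles you flag at the end are exactly where your route becomes laborious while the paper's does not. You propose to (a) verify semi-stability by a slope estimate, (b) check $\h^0(\F(-1))=1$ and $\h^0(\F\tensor\Om^1(1))=4$ from the long exact sequence, and then (c) invoke Proposition~\ref{6.1}. Step (a) is not as clean as you suggest: for a subsheaf $\F'$ of multiplicity $r\le 3$, the bound $\chi(\F'\cap\E)\le 0$ together with $\length(\F'/(\F'\cap\E))\le 2$ only gives $\chi(\F')\le 2$, which does not rule out $\pp(\F')>1/2$; you would need a case-by-case analysis as in the proof of~\ref{6.1}. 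Step (b) is also delicate: the condition $\h^1(\F)=1$ does force the connecting map $\H^0(\O_Z)\to\H^1(\E)$ to be injective, and one can push this down to the twist by $-1$ via multiplication by linear forms, but pinning $\h^0(\F\tensor\Om^1(1))$ to $4$ rather than $3$ requires controlling yet another connecting map and is not a routine consequence of the two given hypotheses.

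The paper sidesteps all of this by building the $W_5$-resolution directly via the horseshoe lemma, combining the resolution of $\E$ with the resolution
\[
0 \lra \O(-4) \lra \O(-3)\oplus\O(-2) \lra \O(-1) \lra \O_Z \lra 0.
\]
The only technical point is that the surjection $\O(-1)\to\O_Z$ must lift to $\F$, i.e.\ some section of $\F(1)$ must map to a \emph{generator} of $\O_Z$. The paper uses $\h^0(\F(1))=9>8=\h^0(\E(1))$ (a consequence of $\h^1(\F(1))=0$) to produce a section hitting $\O_Z$ nontrivially, and then a short argument with the filtration $\C_{z_1}\subset\O_Z$ to upgrade ``nonzero'' to ``generating''. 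After the horseshoe, the hypothesis $\h^1(\F)=1$ (rather than $3$) is exactly what allows cancellation of the extra $\O(-4)$, yielding $\f\in W_5$. Semi-stability and the value $\h^0(\F\tensor\Om^1(1))=4$ then come for free from Proposition~\ref{6.1}, so the two obstacles you identified simply do not arise. The same horseshoe-then-cancel pattern, with a Serre-duality computation of the relevant connecting map, is what the paper uses for the converse in the generic case as well.
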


\begin{proof}
If $\F$ gives a point in $X_5$ then, clearly, $\h^1(\F)=1$ and $\h^1(\F(1))=0$.
Moreover, we saw at \ref{6.1} that $\F$ is an extension of $\O_Z$ by $\E$.
For the converse we combine the resolution of $\E$ at \ref{6.1} with the resolution
\[
0 \lra \O(-4) \lra \O(-3) \oplus \O(-2) \lra \O(-1) \lra \O_Z \lra 0.
\]
In order to do this we need to show that there is a global section of $\F(1)$
that maps to a section of $\O_Z$ generating this sheaf as an $\O_{\P^2}$-module.
Since $\h^0(\E(1))=8$ and $\h^0(\F(1))=9$, there is a global section of $\F(1)$
mapping to a non-zero section $s$ of $\O_Z$.
Consider the extension
\[
0 \lra \C_{z_1} \lra \O_Z \lra \C_{z_2} \lra 0,
\]
where $z_1, z_2$ are closed points of $\P^2$, not necessarily distinct.
If $s$ maps to zero in $\C_{z_2}$, then $s$ generates $\C_{z_1}$.
Let $\F'$ be the preimage of $\C_{z_1}$ in $\F$.
We may combine the above resolution of $\E$ with the resolution
\[
0 \lra \O(-3) \lra 2\O(-2) \lra \O(-1) \lra \C_{z_1} \lra 0
\]
to obtain the resolution
\[
0 \lra \O(-3) \lra \O(-4) \oplus 2\O(-2) \oplus \O(-1) \lra \O(-1) \oplus \O \oplus \O(1) \lra \F' \lra 0.
\]
From this we get $\h^0(\F')=5$, which is absurd, because $\h^0(\F')$ cannot exceed $\h^0(\F)$.
Thus the image of $s$ in $\C_{z_2}$ is non-zero.
This proves that $s$ generates $\O_Z$ in the case when $z_1=z_2$.
If $z_1 \neq z_2$, we can revert the roles of $z_1$ and $z_2$ in the above argument
to show that $s$ also does not vanish at $z_1$. Thus $s$ generates $\O_Z$.
Applying the horseshoe lemma we obtain a resolution
\[
0 \lra \O(-4) \lra \O(-4) \oplus \O(-3) \oplus \O(-2) \oplus \O(-1) \lra \O(-1) \oplus \O \oplus \O(1)
\lra \F \lra 0
\]
in which $\O(-4)$ can be canceled, otherwise $\h^1(\F)$ would equal $3$.
Thus we obtain $\f \in \W_5$ such that $\F = \Coker(\f)$.
It is clear that $\f$ is in $W_5$.

Let $W_{50} \subset W_5$ denote the open subset of morphisms $\f$ satisfying
the following conditions: the curve $C$ given by the equation $\det(\f)=0$ is smooth;
$q_1$ and $\ell_1$ vanish at two distinct points $P_1, Q_1$; $q_2$ and $\ell_2$ vanish
at two distinct points $P_2, Q_2$, that are also distinct from $P_1, Q_1$.
Let $X_{50}$ be the image of $W_{50}$ in $\M(6,3)$.
Given $\f \in W_{50}$ we can apply the snake lemma to deduce that
$\Coker(\f) \isom \O_C(2)(P_1+Q_1-P_2-Q_2)$.
Conversely, we must show that any such sheaf gives a point in $X_{50}$.
According to 6.2 \cite{mult_six_one}, the sheaf $\E = \O_C(2)(-P_2-Q_2)$
gives a point in the stratum $X_5$ of $\M(6,1)$.
Let $Z$ be the union of $P_1$ and $Q_1$.
Our aim is to apply the horseshoe lemma to the extension
\[
0 \lra \E \lra \F \lra \O_Z \lra 0.
\]
For this we need to show that $\F(1)$ has a global section that does not vanish at $P_1$
and also does not vanish at $Q_1$.
Let $\e_1, \e_2 \in \H^0(\O_Z)^*$ be the linear forms of evaluation at $P_1, Q_1$.
Let $\d \colon \H^0(\O_Z) \to \H^1(\E(1))$ be the connecting homomorphism
associated to the above exact sequence tensored with $\O(1)$.
We must show that each $\e_i$ is not orthogonal to the kernel of $\d$,
that is each $\e_i$ is not in the image of the dual map $\d^*$.
To see this we argue as at 2.3.2 \cite{mult_five}.
By Serre duality $\d^*$ is the restriction homomorphism
\[
\xymatrix
{
\H^0(\O_C(-3)(P_2+Q_2) \tensor \omega_C) \egal[d] \ar[r]
&
\H^0((\O_C(-3)(P_2+Q_2) \tensor \omega_C)_{|Z}) \egal[d]
\\
\H^0(\O_C(P_2+Q_2)) \egal[d]
&
\H^0(\O_C(P_2+Q_2)_{|Z}) \egal[d]
\\
\H^0(\O_C)
&
\H^0({\O_C}_{|Z})
}
\]
Let $Y$ be the union of $P_2$ and $Q_2$. The identification
$\H^0(\O_C(P_2+Q_2)) = \H^0(\O_C) = \C$ follows from the fact that the connecting
homomorphism associated to the exact sequence
\[
0 \lra \O_C \lra \O_C(P_2+Q_2) \lra \O_Y \lra 0
\]
is injective. Indeed, the dual of this homomorphism is surjective.
By Serre duality this is equivalent to saying that the restriction morphism
\[
\H^0(\O_C(3)) = \H^0(\omega_C) \lra \H^0({\omega_C}_{|Y}) = \H^0(\O_C(3)_{|Y})
\]
is surjective. This is obvious: we can find a cubic form vanishing at any of the points of $Y$
and that does not vanish at the other point.

Given a regular function $f$ on $C$, $\d^*(f)$ is a non-zero multiple of $\e_1$ precisely if
$f$ vanishes at $Q_1$ and does not vanish at $P_1$.
Such a regular function on $C$ does not exist.
Thus $\e_1$ is not in the image of $\d^*$ and ditto for $\e_2$.
We are now in position to apply the horseshoe lemma.
We obtain a resolution
\[
0 \lra \O(-4) \lra \O(-4) \oplus \O(-3) \oplus \O(-2) \oplus \O(-1) \lra
\O(-1) \oplus \O \oplus \O(1) \lra \F \lra 0.
\]
If the morphism $\O(-4) \to \O(-4)$ were zero, then, arguing as at 2.3.2 \cite{mult_five},
we could deduce that $\F$ is a split extension of $\O_Z$ by $\E$, which would be absurd.
Thus we may cancel $\O(-4)$ to obtain $\f \in W_{50}$ such that $\F \isom \Coker(\f)$.

To prove the inclusion $X_5 \subset \overline{X}_2$ fix a generic sheaf
$\O_C(2)(P_1+Q_1-P_2-Q_2)$ in $X_5$. Choose points $R_1, R_2$ on $C$ such that
\[
\O_C(2)(P_1+Q_1+R_1-P_2-Q_2-R_2)
\]
is a generic sheaf in $X_2$, cf. \ref{4.2}.
Making $R_1$ converge to $R_2$ we obtain a sequence of points in $X_2$
that converges to the fixed point of $X_5$.
\end{proof}


\section{The union of the codimension $5$ and codimension $6$ strata}

\noi
The stratum $X_4$, as we saw above, is parametrised by an open subset inside $\W_4$
while $X_5$ is parametrised by an open subset inside the closed
subset of $\W_5$ given by the condition $\f_{13}=0$.
It is thus natural to ask whether the union $X = X_4 \cup X_5$ is parametrised by an open
subset of $\W_5$. Notice that $X$ is locally closed, being the set of points represented by stable
sheaves inside the locally closed subset of $\M(6,3)$ given by the conditions
$\h^0(\F(-1))=1$, $\h^1(\F)=1$.
Let $W \subset \W_5$ be the set of injective morphisms $\f$ for which $\Coker(\f)$ is stable.
Concretely, $W$ is the union of $W_5$ with the set of morphisms equivalent to morphisms
of the form
\[
\left[
\ba{cc}
0 & 1 \\
\psi & 0
\ea
\right], \qquad \psi \in W_4.
\]
In this section we will write $G=G_5$.
Clearly $W$ is open, $G$-invariant and $X$ is its image under the map $\f \mapsto [\Coker(\f)]$.
We will show that $X$ is a geometric quotient of $W$ by $G$.
Following 4.3 \cite{drezet-maican}, we will describe $X$ as an open subset of the blow-up
of a certain compactification of $X_4$ along a smooth subvariety.
This compactification is the tower of bundles from proposition \ref{5.3}.
The locally closed subset $X_5 \subset X$ is an open subset of the special divisor
of this blow-up.

\begin{prop}
\label{7.1}
There exists a geometric quotient of $W$ by $G$, which is isomorphic to the subset $X \subset \M(6,3)$
of stable sheaves satisfying the conditions $\h^0(\F(-1))=1$, $\h^1(\F)=1$.
\end{prop}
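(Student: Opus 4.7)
The plan is to imitate the strategy of propositions \ref{2.2} and \ref{3.1}: first show that the canonical map $\rho \colon W \to X$, $\rho(\f) = [\Coker(\f)]$, is a categorical quotient whose fibres are precisely the $G$-orbits, then apply \cite{popov-vinberg}, theorem 4.2, to upgrade it to a geometric quotient, and finally upgrade the bijection $W/G \to X$ to an isomorphism.

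First I would establish surjectivity of $\rho$. Given a stable $\F \in X$, proposition \ref{5.1} forces $\h^0(\F \tensor \Om^1(1)) \in \{3,4\}$. In the case $\h^0(\F \tensor \Om^1(1))=4$, proposition \ref{6.1} directly supplies $\f \in W_5 \subset W$ with $\Coker(\f) \isom \F$. In the case $\h^0(\F \tensor \Om^1(1))=3$, proposition \ref{5.1} supplies $\psi \in W_4$ with $\Coker(\psi) \isom \F$, and one embeds $\psi$ into the larger $\W_5$ by placing an identity entry in the $(1,3)$-slot and $\psi$ in the $(2,1)$/$(3,1)$/$(2,2)$/$(3,2)$ block, cancelling the summand $\O(-1) = \O(-1)$; the resulting $\f$ lies in $W$ by the very definition of $W$ as the $G$-saturation of such block morphisms together with $W_5$.

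Second I would check that the fibres of $\rho$ coincide with $G$-orbits. Because all sheaves in $X$ are stable, any isomorphism $\Coker(\f_1) \isom \Coker(\f_2)$ lifts to an isomorphism of the entire four-term resolutions: the required lifts exist because $\Ext^1(\O(a), \O(b))=0$ for $a \ge b$, and they are unique up to an automorphism of the domain/codomain summands, which is precisely the action of $G$. This is the usual argument, and it treats the two subcases $\f \in W_5$ and $\f$ of the block form with $\psi \in W_4$ uniformly, since both lie inside the single ambient space $\W_5$.

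Third, to extract the categorical quotient property I would run through the Beilinson spectral sequence argument from the proof of proposition \ref{2.2} in flat families, obtaining local factorisations of every $G$-invariant morphism from $W$. Once $\rho$ is a categorical quotient with $G$-orbits as fibres, and because $X$ lies in the smooth (hence normal) stable locus of $\M(6,3)$, \cite{popov-vinberg}, theorem 4.2, gives that $\rho$ is a geometric quotient. The induced bijective morphism $W/G \to X$ is then an isomorphism by normality of $X$ together with Zariski's main theorem (or by exhibiting a local inverse via the Beilinson construction applied to a universal family).

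The main technical obstacle I anticipate is the uniform treatment of the fibre-lifting across the transition locus between morphisms with $\f_{13} \neq 0$ (where cancellation yields the $\W_4$-picture) and morphisms with $\f_{13} = 0$ (the $W_5$-picture): one must verify that $W$ is indeed the correct open $G$-invariant locus in $\W_5$ whose image is all of $X$, and that no extra identifications are created at the boundary. The bookkeeping is done by tracking the Ext-vanishings in the lifting step and by verifying in flat families that the stability of $\Coker(\f)$ is equivalent to $\f$ belonging to $W$.
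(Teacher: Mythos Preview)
Your outline matches the paper's strategy exactly: fibres are $G$-orbits (via stability and the vanishing of $\Ext^1$ between line bundles on $\P^2$), then a Beilinson construction to get the categorical quotient property, then normality of $X$ and \cite{popov-vinberg}, theorem 4.2, to upgrade to a geometric quotient.

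The one substantive difference is in how the Beilinson step is carried out. You establish surjectivity pointwise by splitting on $\h^0(\F\tensor\Om^1(1))\in\{3,4\}$ and invoking propositions \ref{5.1} and \ref{6.1} separately, then in step~3 you gesture at ``running Beilinson in flat families'' without saying which spectral sequence or how the two cases glue. As you yourself flag in your final paragraph, this is exactly where the work lies: the constructions behind \ref{5.1} and \ref{6.1} use the monad for $\F$ and branch on the value of $m$, so they do not patch across the locus where $m$ jumps. The paper avoids this by running a \emph{single} Beilinson spectral sequence, namely the type~I sequence converging to $\F(1)$ rather than to $\F$. With the input $\H^1(\F(1))=0$ (proposition \ref{9.1}) the $\EE^1$-page collapses enough that one obtains, after cancelling redundant summands, the resolution
\[
0 \lra \O(-3)\oplus\O(-2)\oplus\O(-1) \lra \O(-1)\oplus\O\oplus\O(1) \lra \F \lra 0
\]
directly and uniformly, without ever reading off $\h^0(\F\tensor\Om^1(1))$. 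This is the missing technical ingredient in your proposal; once you have it, your steps 2--4 go through verbatim, and your step~1 becomes unnecessary.
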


\begin{proof}
Due to the fact that all sheaves giving points in $X$ are stable, it is easy to see that the fibres
of the canonical morphism $W \to X$ are $G$-orbits.
Using the method of 3.1.6 \cite{drezet-maican}, we will show that this is a categorical quotient
map. Applying \cite{mumford}, remark (2), p. 5, we will deduce that $X$ is normal.
From \cite{popov-vinberg}, theorem 4.2,
we will conclude that the map $W \to X$ is a geometric quotient.

Given $[\F]$ in $X$ we will construct $\f \in W$ in the fibre of $[\F]$
by performing algebraic operations on the Beilinson spectral sequence I converging to $\F(1)$.
The display diagram for its first term $\EE^1$ reads:
\[
\xymatrix
{
\H^1(\F(-1)) \tensor \Om^2(2) \ar[r]^-{\f_1} & \H^1(\F) \tensor \Om^1(1) \ar[r]^-{\f_2} & \H^1(\F(1)) \tensor \O \\
\H^0(\F(-1)) \tensor \Om^2(2) \ar[r]^-{\f_3} & \H^0(\F) \tensor \Om^1(1) \ar[r]^-{\f_4} & \H^0(\F(1)) \tensor \O
}.
\]
According to proposition \ref{9.1} below, the group $\H^1(\F(1))$ vanishes.
The above diagram takes the form
\[
\xymatrix
{
4\O(-1) \ar[r]^-{\f_1} & \Om^1(1) &  0 \\
\O(-1) \ar[r]^-{\f_3} & 4\Om^1(1) \ar[r]^-{\f_4} & 9\O
}.
\]
Notice that $\F(1)$ maps surjectively to $\Coker(\f_1)$, hence, arguing as at 4.2 \cite{mult_six_one},
$\Coker(\f_1)=0$ and $\Ker(\f_1) \isom \O(-2) \oplus \O(-1)$.
The display diagram for $\EE^2(\F(1))$ becomes
\[
\xymatrix
{
\Ker(\f_1) \ar[rrd]^-{\f_5} & 0 & 0 \\
\Ker(\f_3) & \Ker(\f_4)/\Im(\f_3) & \Coker(\f_4)
}.
\]
Thus $\EE^3= \EE^{\infty}$, $\f_3$ and $\f_5$ are injective, $\F(1)$ is isomorphic to $\Coker(\f_5)$
and $\Ker(\f_4)= \Im(\f_3)$. Note that $\f_5$ factors through $9\O$, so we have the resolution
\[
0 \lra \O(-1) \lra \O(-2) \oplus \O(-1) \oplus 4\Om^1(1) \lra 9\O \lra \F(1) \lra 0.
\]
The morphism $4\Om^1(1) \to 9\O$ factors through the morphism $4\Om^1(1) \to 12\O$
obtained from the Euler sequence. This leads to the resolution
\[
0 \lra \O(-1) \lra \O(-2) \oplus \O(-1) \oplus 12\O \stackrel{\f}{\lra} 9\O \oplus 4\O(1) \lra \F(1) \lra 0.
\]
Notice that $\rank(\f_{13}) \ge 8$, otherwise $\F(1)$ would map surjectively to the cokernel of a morphism
$\O(-2) \oplus \O(-1) \to 2\O$, in violation of semi-stability.
Canceling $8\O$ and twisting by $-1$ we get a resolution
\[
0 \lra \O(-2) \stackrel{\psi}{\lra} \O(-3) \oplus \O(-2) \oplus 4\O(-1) \stackrel{\f}{\lra} \O(-1) \oplus 4\O \lra \F \lra 0
\]
in which $\psi_{11}=0$, $\psi_{21}=0$. If $\f_{13}=0$, then, arguing as in the proof of 2.1.4 \cite{mult_five},
we can show that $\Coker(\psi_{31}) \isom \O(-1) \oplus \Om^1(1)$. If $\f_{13}=0$ roughly the same
argument applies. We are led to a resolution
\[
0 \lra \O(-3) \oplus \O(-2) \oplus \O(-1) \oplus \Om^1(1) \stackrel{\f}{\lra} \O(-1) \oplus 4\O \lra \F \lra 0
\]
in which $\f_{14}=0$. As at loc.cit., we have the factorisation $\f_{24}= \b \circ i$.
If $\b$ were not injective, then $\F$ would map surjectively onto the cokernel of a morphism
\[
\O(-3) \oplus \O(-2) \oplus \O(-1) \lra \O(-1) \oplus 2\O,
\]
in violation of semi-stability. Thus $\b$ is injective, i.e. $\Coker(\f_{24})\isom \O \oplus \O(1)$,
and we get the resolution
\[
0 \lra \O(-3) \oplus \O(-2) \oplus \O(-1) \stackrel{\f}{\lra} \O(-1) \oplus \O \oplus \O(1) \lra \F \lra 0.
\]
By definition $\f$ belongs to $W$.
\end{proof}

\noi
In the sequel we will represent elements $\f \in \W_5$ and tangent vectors $w$ at $\f$ by matrices
\[
\f= \left[
\ba{ccc}
q_1 & \ell_1 & c \\
f_1 & q & \ell_2 \\
p & f_2 & q_2
\ea
\right], \qquad w = \left[
\ba{ccc}
q_1' & \ell_1' & c' \\
f_1' & q' & \ell_2' \\
p' & f_2' & q_2'
\ea
\right].
\]
Elements $\psi \in \W_4$ and tangent vectors $u$ at $\psi$ will be represented by matrices
\[
\psi= \left[
\ba{cc}
f_1 & q \\
p & f_2
\ea
\right], \qquad \qquad u = \left[
\ba{cc}
f_1' & q' \\
p' & f_2'
\ea
\right].
\]
Consider the open $G_4$-invariant subset $U \subset \W_4$ given by the conditions
$q \neq 0$, $q \nmid f_1$, $q \nmid f_2$.
Clearly $U$ is contained in the set $W_4' \setminus \Sigma$ of \ref{5.3},
hence there exists a geometric quotient $U/G_4$ as an open subset of the projective
variety $(W_4' \setminus \Sigma)/G_4$, which was described as a tower of bundles at loc.cit.
In particular, $U/G_4$ is smooth. Denote by $\g \colon U \to U/G_4$ the quotient map.
Note that $W_4$ is the subset of injective morphisms inside $U$.
Its complement $Z= U \setminus W_4$ consists of those morphisms represented by matrices of the form
\[
\left[
\ba{c}
\ell_2 \\ q_2
\ea
\right] \left[
\ba{cc}
q_1 & \ell_1
\ea
\right]
\]
where $\ell_1, \ell_2 \in V^*$ are non-zero, $q_1 \in \SS^2 V^*$ is non-divisible by $\ell_1$,
$q_2 \in \SS^2 V^*$ is non-divisible by $\ell_2$.
Thus
$
Z/G_4 \isom \Hilb(2) \times \Hilb(2).
$
This is a smooth subvariety of $U/G_4$ which we denote by $S$.
Let $B$ denote the blow-up of $U/G_4$ along $S$ and let $\b \colon B \to U/G_4$ denote the
blowing-down map.
Following 4.3 \cite{drezet-maican} we define the map
\[
\d \colon W \lra U, \qquad \d(\f)= c \left[
\ba{cc}
f_1 & q \\
p & f_2
\ea
\right] - \left[
\ba{c}
\ell_2 \\ q_2
\ea
\right] \left[
\ba{cc}
q_1 & \ell_1
\ea
\right].
\]
Notice that $\g \circ \d$ maps the smooth hypersurface $W_5$ to $S$.
By the universal property of the blow-up there is a morphism $\a$ making the diagram
commute:
\[
\xymatrix
{
W \ar[r]^-{\d} \ar[d]^-{\a} & U \ar[d]^-{\g} \\
B \ar[r]^-{\b} & U/G_4
}.
\]

\begin{prop}
\label{7.2}
The image of $\a$ is an open subset $B_0$ of $B$ and the map $W \to B_0$ is a geometric
quotient for the action of $G$. Thus $X$ is isomorphic to $B_0$.
\end{prop}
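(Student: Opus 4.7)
The plan is to use Proposition \ref{7.1}, which provides the geometric quotient $\pi : W \to X$, together with the universal property of the blow-up, to produce a bijective morphism $X \to B_0$ that will then be an isomorphism by the normality of $X$ and Zariski's Main Theorem. The argument proceeds in three steps: (i) $G$-invariance of $\alpha$, (ii) openness and explicit description of the image $B_0 := \alpha(W)$, and (iii) identification of the fibres of $\alpha$ with $G$-orbits.

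For (i), I would verify by a direct matrix computation that for $g \in G$ with induced element $\bar g \in G_4$ (obtained from the middle $2 \times 2$ block of the source and target bundles), one has $\delta(g \cdot \f) = \lambda(g) \, \bar g \cdot \delta(\f)$ for some $\lambda(g) \in \C^*$. Both the scalar and the $G_4$-action are killed by $\gamma$, so $\gamma \circ \delta$ is $G$-invariant, and hence so is $\alpha$ by the uniqueness clause in the universal property of the blow-up. For (ii), I would split $W = \{ c \neq 0 \} \cup W_5$. On $\{ c \neq 0 \}$ a slice argument allows one to normalise to $c = 1$ with vanishing outer blocks; then $\delta(\f)$ equals the middle block $\psi \in W_4$, and $\alpha(\{ c \neq 0 \}) = W_4/G_4 \subset U/G_4 \setminus S$, which is open by Proposition \ref{5.3}. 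On $W_5$ the image lies in the exceptional divisor $E \subset B$; comparison with the description of $W_5/G_5$ in Proposition \ref{6.2} shows that $\alpha(W_5) \subset E$ is an open subset cut out by the conditions $\ell_i \nmid q_i$ together with stability of $\Coker(\f)$.

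For (iii), off $W_5$ the matching of fibres with orbits follows directly from the slice form. On $W_5$ the essential point is that Proposition \ref{6.2} already identifies $W_5/G_5$ as an open subset of a fibre bundle over $\Hilb(2) \times \Hilb(2) = S$, and one must check that this bundle structure coincides with that of the exceptional divisor $E \to S$ of the blow-up. Assembling (i), (ii) and (iii), the morphism $\alpha$ descends through $\pi$ to a bijective morphism $\bar\alpha : X \to B_0$; since $X$ is normal (as in the proof of Proposition \ref{7.1}) and $B_0$ is smooth, Zariski's Main Theorem yields $X \isom B_0$. The main obstacle I expect is step (iii) on $W_5$: one has to compute the differential of $\delta$ at a point $\f \in W_5$ transversally to the hypersurface $W_5$ and verify that, modulo the $G$-action and the tangent space to $Z$, it recovers the fibre of the bundle from Proposition \ref{6.2}, that is, the normal direction in $N_{S / (U/G_4)}$.
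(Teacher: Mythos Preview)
Your approach is correct and close in spirit to the paper's, but the logical order is inverted. The paper does not first descend through the quotient of Proposition~\ref{7.1} and then invoke Zariski's Main Theorem; instead it works directly with $\alpha$, showing (a) that the fibres of $\alpha$ are exactly the $G$-orbits, and (b) that $\alpha$ has surjective differential at every point of $W$. From (b) it follows that $B_0 = \alpha(W)$ is open in $B$, hence smooth, and then the Popov--Vinberg criterion yields that $W \to B_0$ is a geometric quotient; the isomorphism $X \simeq B_0$ comes at the very end, by uniqueness of the quotient against Proposition~\ref{7.1}. Your route trades the full surjective-differential check for bijectivity plus normality of the target, which is a legitimate simplification: once you observe that in fact $\alpha(W \setminus W_5) = B \setminus E$ (equality, not merely inclusion, since $W_4 = U \setminus Z$), openness of $B_0$ reduces to openness of $\alpha(W_5)$ inside $E$, and this is exactly what your comparison with Proposition~\ref{6.2} delivers.

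The crux you correctly flag as the main obstacle is also the heart of the paper's argument: the explicit formula for $\alpha$ on $W_5$. The paper computes, for $\f \in W_5$ lying over $\psi = \d(\f) \in Z$ and $s = \g(\psi)$, that
\[
\alpha(\f) \;=\; \Big(\, \big\langle
\left[\begin{smallmatrix} f_1 & q \\ p & f_2 \end{smallmatrix}\right]
\ \bmod\ \TTT_\psi Z \big\rangle,\ s \,\Big) \ \in\ \P(\NN_s) \subset E,
\]
by evaluating $\dd\d_\f$ on the normal vector $\nu$ to $W_5$ with $c'=1$. Since $\TTT_\psi Z$ is precisely the subspace of matrices $\left[\begin{smallmatrix}\ell_2\\q_2\end{smallmatrix}\right]\!\left[\begin{smallmatrix}q_1'&\ell_1'\end{smallmatrix}\right]+\left[\begin{smallmatrix}\ell_2'\\q_2'\end{smallmatrix}\right]\!\left[\begin{smallmatrix}q_1&\ell_1\end{smallmatrix}\right]$ appearing in the fibre description of Proposition~\ref{6.2}, this single calculation simultaneously gives the fibre-equals-orbit statement on $W_5$ and the identification of $\alpha(W_5)$ with the open subset of $E$ described there. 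So your step (iii) on $W_5$ and your step (ii) on $W_5$ are really the same computation, and it is the one the paper carries out.
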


\begin{proof}
Firstly, we will show that the fibres of $\a$ are the $G$-orbits.
Secondly, we will show that $\a$ has surjective differential at every point.
Thus $B_0$ is open, hence smooth.
Applying \cite{popov-vinberg}, theorem 4.2, we will conclude that the map $W \to B_0$
is a geometric quotient modulo $G$.

Notice that $\d(W \setminus W_5)= W_4$. In fact, it is a trivial observation that the fibres
of the composite map $W \setminus W_5 \stackrel{\d}{\to} W_4 \stackrel{\g}{\to} W_4/G_4$
are the $G$-orbits. Thus $\a \colon W \setminus W_5 \to B \setminus E$ is a geometric quotient,
$E$ being the exceptional divisor of the blow-up.

Fix a point $s \in S$. Recall that $\b^{-1}(s) = \P(\NN_s)$, where $\NN$ is the normal bundle of $S$
in $U/G_4$. Choose a point $\psi \in U$ lying over $s$ of the form
\[
\psi = \left[
\ba{c}
\ell_2 \\ q_2
\ea
\right] \left[
\ba{cc}
q_1 & \ell_1
\ea
\right].
\]
We identify $\NN_s$ with the fibre over $\psi$ of the normal bundle of $Z$ in $U$,
that is with $\TTT_{\psi} U/\TTT_{\psi} Z$.
Since $Z$ is smooth, $\TTT_{\psi} Z$ can be identified with the space of matrices
\[
\left[
\ba{c}
\ell_2 \\ q_2
\ea
\right] \left[
\ba{cc}
q_1' & \ell_1'
\ea
\right] + \left[
\ba{c}
\ell_2' \\ q_2'
\ea
\right] \left[
\ba{cc}
q_1 & \ell_1
\ea
\right], \qquad \ell_1', \ell_2' \in V^*, \quad q_1', q_2' \in \SS^2 V^*.
\]
Choose $\f \in W_5$ lying over $\psi$. The differential of $\d$ at $\f$ is given by the formula
\[
\dd \d_{\f} (w) = c' \left[
\ba{cc}
f_1 & q \\
p & f_2
\ea
\right] - 
\left[
\ba{c}
\ell_2 \\ q_2
\ea
\right] \left[
\ba{cc}
q_1' & \ell_1'
\ea
\right] -
\left[
\ba{c}
\ell_2' \\ q_2'
\ea
\right] \left[
\ba{cc}
q_1 & \ell_1
\ea
\right].
\]
The tangent vector $\nu \in \TTT_{\f} W$ given by the matrix
\[
\left[
\ba{ccc}
0 & 0 & 1 \\
0 & 0 & 0 \\
0 & 0 & 0
\ea
\right]
\]
lies in the normal direction to $W_5$ at $\f$, hence
\[
\a(\f) = ( \langle \dd \d_{\f} (\nu) \mod \TTT_{\psi} Z \rangle, s)
= \left( \langle \left[
\ba{cc}
f_1 & q \\
p & f_2
\ea
\right] \mod \TTT_{\psi} Z \rangle, s \right).
\]
Fix $g \in G$. Denote by the same letter the map $W \to W$ of multiplication by $g$.
Since $W_5$ is $G$-invariant, $\dd g_{\f}(\nu)$ is a normal vector to $W_5$ at $g \f$.
Thus, taking into account that $\g \circ \d \circ g = \g \circ \d$, we have
\begin{align*}
\a(g \f) & = ( \langle \dd (\g \circ \d)_{g \f}(\dd g_{\f}(\nu)) \mod \TTT_{\g \circ \d (g \f)} S \rangle, \g \circ \d (g\f)) \\
& = ( \langle \dd (\g \circ \d \circ g)_{\f}(\nu) \mod \TTT_{\g \circ \d \circ g (\f)} S \rangle, \g \circ \d \circ g (\f)) \\
& = ( \langle \dd (\g \circ \d)_{\f}(\nu) \mod \TTT_{\g \circ \d (\f)} S \rangle, \g \circ \d (\f)) \\
& = \a(\f).
\end{align*}
This shows that $\a$ is $G$-equivariant.
Assume now that $\a(\bar{\f})= \a(\f)$ for $\bar{\f}, \f \in W_5$.
Performing, possibly, elementary operations on $\bar{\f}$ we may assume that
$\d (\bar{\f}) = \d (\f)$, i.e.
\[
\left[
\ba{c}
\bar{\ell}_2 \\ \bar{q}_2
\ea
\right] \left[
\ba{cc}
\bar{q}_1 & \bar{\ell}_1
\ea
\right] = \left[
\ba{c}
\ell_2 \\ q_2
\ea
\right] \left[
\ba{cc}
q_1 & \ell_1
\ea
\right].
\]
Thus $\ell_1$ divides both  $\bar{\ell}_1 \bar{\ell}_2$ and $\bar{\ell}_1 \bar{q}_2$,
hence $\bar{\ell}_1 = a \ell_1$ for some $a \in \C^*$, hence
$\bar{q}_1 = a q_1$, $\bar{\ell}_2 = a^{-1}\ell_2$, $\bar{q}_2 = a^{-1} q_2$.
Performing, possibly, elementary operations on $\bar{\f}$, we may assume that
\[
\bar{\f} = \left[
\ba{ccc}
q_1 & \ell_1 & 0 \\
\bar{f}_1 & \bar{q} & \ell_2 \\
\bar{p} & \bar{f}_2 & q_2
\ea
\right].
\]
The relation $\a(\bar{\f}) = \a(\f)$ is equivalent to saying that
\[
\langle \left[
\ba{cc}
\bar{f}_1 & \bar{q} \\
\bar{p} & \bar{f}_2
\ea
\right] \mod \TTT_{\psi} Z \rangle =
\langle \left[
\ba{cc}
f_1 & q \\
p & f_2
\ea
\right] \mod \TTT_{\psi} Z \rangle,
\]
which is equivalent to saying that there are $b \in \C^*$ and $\ell_1', \ell_2' \in V^*$,
$q_1', q_2' \in \SS^2 V^*$ such that
\[
\left[
\ba{cc}
\bar{f}_1 & \bar{q} \\
\bar{p} & \bar{f}_2
\ea
\right] = b \left[
\ba{cc}
f_1 & q \\
p & f_2
\ea
\right] + \left[
\ba{c}
\ell_2 \\ q_2
\ea
\right] \left[
\ba{cc}
q_1' & \ell_1'
\ea
\right] + \left[
\ba{c}
\ell_2' \\ q_2'
\ea
\right] \left[
\ba{cc}
q_1 & \ell_1
\ea
\right].
\]
From this it immediately follows that $\bar{\f}$ is in the orbit of $\f$.
This proves that the fibres of $\a$ are the $G$-orbits.

It remains to show that $\a$ has surjective differential at every point $\f \in W_5$.
By construction $\dd \a_{\f}(\nu)$ is a normal vector to $E$ at $\a(\f)$.
Thus we only need to show that the restriction $W_5 \stackrel{\a}{\to} E$ has surjective
differential at $\f$.
The composite map $W_5 \stackrel{\a}{\to} E \stackrel{\b}{\to} S$ has surjective differential
at every point because the maps $W_5 \stackrel{\d}{\to} Z$ and $Z \stackrel{\g}{\to} S$
enjoy this property.
Denote $s = \g \circ \d (\f)$, $\psi= \d(\f)$.
We have reduced the problem to showing that the map
$\a^{-1} \b^{-1}(s) \stackrel{\a}{\to} \P(\NN_s)$ has surjective differential at $\f$.
Denote by $A$ the affine subset of $\W_5$ represented by matrices of the form
\[
\left[
\ba{ccc}
q_1 & \ell_1 & 0 \\
\bar{f}_1 & \bar{q} & \ell_2 \\
\bar{p} & \bar{f}_2 & q_2
\ea
\right],
\]
$\bar{q} \in \SS^2 V^*$, $\bar{f}_1, \bar{f}_2 \in \SS^3 V^*$, $\bar{p} \in \SS^4 V^*$.
We have a commutative diagram
\[
\xymatrix
{
A \cap W_5 \ar[r]^-{\pi} \ar[d]^-{i} & \NN_s \setminus \{ 0 \} \ar[d] \\
\a^{-1} \b^{-1}(s) \ar[r]^-{\a} & \P(\NN_s)
},
\]
where $i$ is the inclusion map and $\pi$ is given by the formula
\[
\left[
\ba{ccc}
q_1 & \ell_1 & 0 \\
\bar{f}_1 & \bar{q} & \ell_2 \\
\bar{p} & \bar{f}_2 & q_2
\ea
\right] \longmapsto 
\left[
\ba{cc}
\bar{f}_1 & \bar{q} \\
\bar{p} & \bar{f}_2
\ea
\right] \mod \TTT_{\psi} Z.
\]
Thus $\pi$ is the restriction of  linear surjective map $A \to \NN_s$ defined by the same
formula as above. As such, $\pi$ has surjective differential at every point.
Thus $\a \circ i$ has surjective differential at every point.
We conclude that the map $\a^{-1} \b^{-1} (s) \stackrel{\a}{\to} \P(\NN_s)$
has surjective differential at $\f$.
\end{proof}

\noi
In the next proposition we give some information about the closed subvariety
$E_1 = B \setminus B_0$ of $B$. Clearly $E_1$ is a subvariety of the exceptional divisor.
Let $\Delta_1 \subset \Hilb(2) \times \Hilb(2)$ denote the subvariety of pairs
of zero-dimensional subschemes of length $2$ that have at least one point in common.
Let $\Delta \subset \Hilb(2) \times \Hilb(2)$ denote the diagonal subvariety.

\begin{prop}
\label{7.3}
The blowing-down map sends $E_1$ to $\Delta_1$.
Its restriction $\b_1 \colon E_1 \to \Delta_1$ is an isomorphism away from $\Delta$.
The fibres of $\b_1$ over points of $\Delta$ are isomorphic to $\P^1$.
\end{prop}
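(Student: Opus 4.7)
The plan is to compute $E_1 \cap \b^{-1}(s)$ for each $s = (Z_1, Z_2) \in S$ by converting non-injectivity of the representing morphism $\f$ into a single polynomial condition. Fix a lift $\psi \in Z$ of $s$, of the standard form with blocks $(q_1, \ell_1)$ and $(\ell_2, q_2)^\T$. A class $[M] \in \P(\NN_s)$ is represented by a quadruple $(f_1, q, p, f_2)$ modulo $\TTT_\psi Z$, and the point $(s, [M])$ lies in $B_0$ precisely when the corresponding
\[
\f = \left[\ba{ccc} q_1 & \ell_1 & 0\\ f_1 & q & \ell_2\\ p & f_2 & q_2 \ea\right]
\]
is injective, equivalently $\det(\f) \neq 0$ in $\SS^6 V^*$. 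Expanding gives
\[
\det(\f) = q_1 q_2\, q - q_1 \ell_2\, f_2 - \ell_1 q_2\, f_1 + \ell_1 \ell_2\, p,
\]
and a short calculation, analogous to the invariance verification in the proof of \ref{7.2}, shows this is well-defined on $[M] \in \NN_s$. Hence $\det$ descends to a linear map $\NN_s \to \SS^6 V^*$ whose projectivised kernel equals $E_1 \cap \b^{-1}(s)$.

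The key observation is that the image of this linear map is precisely the degree-$6$ part of the product ideal $I_{Z_1} I_{Z_2}$, generated by $q_1 q_2, q_1 \ell_2, \ell_1 q_2, \ell_1 \ell_2$. Since $\dim \NN_s = 24$ throughout $S$, the problem reduces to computing $\dim (I_{Z_1} I_{Z_2})_6 = 28 - \ell(V(I_{Z_1} I_{Z_2}))$, which is valid because any subscheme of $\P^2$ of length at most $6$ has its Hilbert function stabilised by degree $6$. A case-by-case local analysis at the points of $\mathrm{supp}(Z_1 \cup Z_2)$ yields $\ell = 4$ when $Z_1 \cap Z_2 = \emptyset$, $\ell = 5$ when $s \in \Delta_1 \setminus \Delta$, and $\ell = 6$ when $s \in \Delta$. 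Correspondingly, the fibre $E_1 \cap \b^{-1}(s)$ is empty, a reduced point, or $\P^1$.

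This gives the set-theoretic content of the proposition: $\b_1(E_1) = \Delta_1$, the map $\b_1$ is bijective over $\Delta_1 \setminus \Delta$, and the fibres over $\Delta$ are projective lines. To upgrade the bijectivity to a scheme-theoretic isomorphism one applies Zariski's main theorem: the restriction $\b_1 \colon \b_1^{-1}(\Delta_1 \setminus \Delta) \to \Delta_1 \setminus \Delta$ is a proper bijective morphism to a smooth (hence normal) variety of the same dimension $6$, hence an isomorphism.

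The main obstacle is the length computation, which splits into several subcases depending on whether $Z_1, Z_2$ are reduced at their common point(s). Each subcase requires a direct computation in the local ring $\O_{\P^2, P}$: when both $Z_i$ are reduced at a single common point $P$ one uses $\mathfrak m_P \cdot \mathfrak m_P = \mathfrak m_P^2$, of colength $3$; when $Z_i$ is non-reduced at $P$ the local ideal $I_{Z_i, P}$ is strictly smaller than $\mathfrak m_P$, producing different local colengths, but the global total of $\ell$ is checked to be the same $4, 5, 6$ in the three cases. Verifying this stability of the total across all degeneration patterns is the core technical step.
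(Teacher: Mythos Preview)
Your argument is correct and takes a genuinely different route from the paper's proof. The paper proceeds by explicit coordinate computation: it chooses canonical forms for $\f \in W_5$ lying over a given $s$, splitting into the two cases $\ell_1 \nparallel \ell_2$ and $\ell_1 \parallel \ell_2$, and in each case identifies a finite-dimensional transversal $A$ to $\TTT_\psi Z$, then solves the equation $\det(\f)=0$ by hand in coordinates, reading off whether the solution set modulo $\TTT_\psi Z$ is empty, a point, or a line. The conclusion over $\Delta_1 \setminus \Delta$ is then upgraded to an isomorphism using smoothness, exactly as you do.

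Your approach replaces the coordinate work by the single observation that $\det$ descends to a surjective linear map $\NN_s \twoheadrightarrow (I_{Z_1} I_{Z_2})_6$, reducing everything to the length $\ell$ of the product scheme $V(I_{Z_1} I_{Z_2})$; since locally $I_{Z_i,P} \supset \mathfrak m_P^2$, the product contains $\mathfrak m_P^4$, so the Hilbert function is certainly stable by degree $6$ and your formula $\dim(I_{Z_1} I_{Z_2})_6 = 28 - \ell$ is valid. The trade-off is that the paper's brute-force approach handles all degenerations at once within each of its two cases, whereas you must verify $\ell \in \{4,5,6\}$ across several local patterns (reduced/non-reduced, same/different tangent directions); this is the honest ``core technical step'' you flag, and it does check out in every subcase. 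Your route is more conceptual and makes the connection to the ideal $I_{Z_1} I_{Z_2}$ transparent, while the paper's is more self-contained and requires no commutative-algebra input beyond polynomial arithmetic.
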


\begin{proof}
We study the fibres of $\b_1$ by explicit calculations, as in 4.3.4 \cite{drezet-maican}.
Consider first a point $s = \g(\psi) \in S$,
\[
\psi = \left[
\ba{c}
\ell_2 \\ q_2
\ea
\right] \left[
\ba{cc}
q_1 & \ell_1
\ea
\right],
\]
where $\ell_2$ is not a multiple of $\ell_1$.
We choose the following canonical form for elements $\f \in W_5$ mapping to $s$:
\[
\left[
\ba{ccc}
q_1(Y,Z) & X & 0 \\
f_1(X,Z) & a Z^2 & Y \\
p(X,Y,Z) & f_2(Y,Z) & q_2(X,Z)
\ea
\right],
\]
where $a \in \C$. Let $A$ denote the vector space of matrices of the form
\[
\left[
\ba{cc}
f_1(X,Z) & a Z^2 \\
p(X,Y,Z) & f_2(Y,Z)
\ea
\right].
\]
The map $\a \colon A \setminus \{ 0 \} \to \P(\NN_s)$ given by the formula
\[
\left[
\ba{cc}
f_1 & q \\
p & f_2
\ea
\right] \longmapsto \langle \left[
\ba{cc}
f_1 & q \\
p & f_2
\ea
\right] \mod \TTT_{\psi} Z \rangle
\]
is well-defined and clearly surjective. Indeed, assume that
\[
\left[
\ba{cc}
f_1 & q \\
p & f_2
\ea
\right] = \left[
\ba{c}
Y \\ q_2
\ea
\right] \left[
\ba{cc}
q_1' & \ell_1'
\ea
\right] + \left[
\ba{c}
\ell_2' \\ q_2'
\ea
\right] \left[
\ba{cc}
q_1 & X
\ea
\right]
\]
for some $\ell_1', \ell_2' \in V^*$, $q_1', q_2' \in \SS^2 V^*$.
The relation $aZ^2 = Y \ell_1' + X \ell_2'$ shows that $a=0$ and $\ell_1'= bX$,
$\ell_2'=-bY$ for some $b \in \C$.
The relation $f_1 = Y q_1' - bYq_1$ shows that $f_1=0$ and $q_1'=b q_1$.
Analogously $f_2=0$ and $q_2' = -b q_2$. We have
\[
p = q_2 q_1' + q_2' q_1 = q_2 b q_1 - b q_2 q_1 =0.
\]
Thus $A \cap \TTT_{\psi}Z = \{ 0 \}$.
Note that $E_1 \cap \P(\NN_s)$ is the image under $\a$ of the subset of $A \setminus \{ 0 \}$
given by the condition $\det(\f) \neq 0$.
This condition reads
\[
a Z^2 q_1 q_2 - q_1 Y f_2 - X f_1 q_2 + XY p =0.
\]
There are linear forms $\l_1(Y,Z)$, $\l_2(X,Z)$ and constants $a_1$, $a_2$ such that
\[
q_1 = Y \l_1(Y,Z) + a_1 Z^2, \qquad \qquad q_2 = X \l_2(X,Z) + a_2 Z^2.
\]
Notice that $s$ belongs to $\Delta_1$ precisely if $a_1=0$ and $a_2=0$.
The above condition becomes
\[
q_1(a Z^2 X \l_2 + a a_2 Z^4 - Y f_2)= X f_1 q_2 - XY p.
\]
Since $X$ divides $a a_2 Z^4 - Y f_2$ we have $a a_2=0$ and $f_2 =0$.
Analogously we have $a a_1 =0$ and $f_1 =0$.
The condition $\det(\f)=0$ becomes
\[
a Z^2(Y\l_1 + a_1 Z^2)(X\l_2 + a_2 Z^2) + XYp =0, \quad \text{that is} \quad
p= - a Z^2 \l_1 \l_2.
\]
If $s \notin \Delta_1$, then $a=0$, hence $E_1 \cap \P(\NN_s)= \emptyset$.
If $s \in \Delta_1$, then $E_1 \cap \P(\NN_s)$ is a point, namely it is the image under $\a$
of the set of non-zero matrices of the form
\[
\left[
\ba{cc}
0 & a Z^2 \\
-a Z^2 \l_1 \l_2 & 0
\ea
\right].
\]
Assume now that $s$ is such that $\ell_2$ is a multiple of $\ell_1$.
The elements $\f \in W_5$ mapping to $s$ have canonical form
\[
\left[
\ba{ccc}
q_1(Y,Z) & X & 0 \\
f_1(Y,Z) & q(Y,Z) & X \\
p(X,Y,Z) & f_2(Y,Z) & q_2(Y,Z)
\ea
\right].
\]
Let $A$ denote the vector space of matrices of the form
\[
\left[
\ba{cc}
f_1(Y,Z) & q(Y,Z) \\
p(X,Y,Z) & f_2(Y,Z)
\ea
\right].
\]
The map $\a \colon A \setminus \TTT_{\psi} Z \to \P(\NN_s)$ defined by the same formula as above
is clearly surjective. We claim that $A \cap \TTT_{\psi} Z$ is the subspace of matrices of the form
\[
\left[
\ba{cc}
- \ell q_1 & 0 \\
0 & \ell q_2
\ea
\right],
\]
where $\ell$ is a linear form in $Y$ and $Z$.
Indeed, $A \cap \TTT_{\psi} Z$ is given by the relation
\[
\left[
\ba{cc}
f_1 & q \\
p & f_2
\ea
\right] = \left[
\ba{c}
X \\ q_2
\ea
\right] \left[
\ba{cc}
q_1' & \ell_1'
\ea
\right] + \left[
\ba{c}
\ell_2' \\ q_2'
\ea
\right] \left[
\ba{cc}
q_1 & X
\ea
\right].
\]
The relation $q= X(\ell_1'+ \ell_2')$ forces $q=0$ and $\ell_1' = -\ell_2'= aX+bY+cZ$.
The relation $f_1 = Xq_1'- (aX+bY+cZ)q_1$ shows that
\[
f_1 = -(bY+cZ)q_1, \quad q_1'= a q_1.
\quad \text{Analogously we have} \quad
f_2 = (bY+cZ) q_2, \quad q_2'= - a q_2,
\]
hence
\[
p = q_2 q_1' + q_2' q_1 = q_2 a q_1 - a q_2 q_1 =0.
\]
Next we determine the subspace of $A$ given by the relation $\det(A)=0$, which reads
\[
q_1 q q_2 - q_1 X f_2 - f_1 X q_2 + X^2 p = 0.
\]
Since $q_1 q q_2$ is divisible by $X$ we see that $q=0$ and $X p= q_1 f_2 + f_1 q_2$,
forcing $p=0$ and $f_1 q_2 = - f_2 q_1$.
If $s \notin \Delta_1$, then $q_1$ and $q_2$ have no common factor, hence
\[
\left[
\ba{cc}
f_1 & q \\
p & f_2
\ea
\right]
\]
belongs to $A \cap \TTT_{\psi} Z$. In this case we get $E_1 \cap \P(\NN_s)= \emptyset$.
If $s \in \Delta_1 \setminus \Delta$, then the greatest common divisor of $q_1$ and $q_2$
is a linear form, so the condition $\det(\f)=0$ determines a three-dimensional subspace 
of $A$. In this case $E_1 \cap \P(\NN_s)$ is a point. If $s \in \Delta$, then $q_2$ is a multiple of
$q_1$ and we get a subspace of dimension $4$, hence $\E_1 \cap \P(\NN_s)$ is isomorphic
to $\P^1$.

In conclusion, $\b(E_1)= \Delta_1$, the restriction of the blow-down map $\b_1 \colon E_1 \to \Delta_1$
is bijective over $\Delta_1 \setminus \Delta$ and its fibres over $\Delta$ are isomorphic to $\P^1$.
Since $\Delta_1 \setminus \Delta$ is smooth, $\b_1$ determines an isomorphism
$E_1 \setminus \b^{-1} (\Delta) \to \Delta_1 \setminus \Delta$.
\end{proof}

\noi
Notice that $\Delta$ is a smooth subvariety of $\Delta_1$ of codimension $2$.
It is thus natural to ask whether $E_1$ is the blow-up of $\Delta_1$ along $\Delta$.


\section{The codimension $8$ stratum}

\begin{prop}
\label{8.1}
Let $\F$ give a point in $\M(6,3)$ and satisfy the conditions $\h^0(\F(-1))=2$, $\h^1(\F)=2$.
Then $\h^0(\F \tensor \Om^1(1))=6$. These sheaves are precisely the sheaves having
resolution of the form
\[
0 \lra 2\O(-3) \oplus \O \stackrel{\f}{\lra} \O(-2) \oplus 2\O(1) \lra \F \lra 0,
\]
where $\f_{11}$ has linearly independent entries and ditto for $\f_{22}$.
\end{prop}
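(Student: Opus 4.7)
The plan is to follow the template of propositions \ref{5.1} and \ref{6.1}. Setting $m = \h^0(\F \tensor \Om^1(1))$, I first compute $\h^1(\F(-1)) = 5$ and $\h^0(\F) = 5$ by Riemann-Roch, and observe that the Euler sequence gives $\chi(\F \tensor \Om^1(1)) = 3 \chi(\F) - \chi(\F(1)) = 0$, so $\h^1(\F \tensor \Om^1(1)) = m$ as well. The Beilinson free monad (2.2.1) \cite{drezet-maican} for $\F$ then takes the form
\[
0 \lra 2\O(-2) \lra 5\O(-2) \oplus m\O(-1) \lra m\O(-1) \oplus 5\O \lra 2\O \lra 0,
\]
with $\F$ as cohomology at the third term.

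Next I would unravel this monad into a two-term resolution of $\F$, arguing as at 2.1.4 \cite{mult_five}. The Euler sequence $0 \to \Om^1(1) \to 3\O \to \O(1) \to 0$ together with its twisted dual $0 \to \O(-3) \to 3\O(-2) \to \Om^1 \to 0$ allow one to trade the $\Om^1(1)$-summands appearing in the natural intermediate resolution for the $\O(-3)$- and $\O(1)$-summands of the target. After cancellation of direct summands forced by semi-stability, in the spirit of \ref{5.1}, the resolution should reduce to
\[
0 \lra 2\O(-3) \oplus \O \stackrel{\f}{\lra} \O(-2) \oplus 2\O(1) \lra \F \lra 0,
\]
from which a Bott-formulas computation yields $m = 6$. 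The conditions on $\f_{11}$ and $\f_{22}$ are forced by semi-stability: if the two entries of $\f_{11}$ shared a common linear factor $\ell$, the snake lemma would produce a nonzero quotient of $\F$ of the form $\O_L(d)$ with $L$ the line $\ell=0$, destabilising $\F$; the statement for $\f_{22}$ follows dually by swapping the roles of subsheaves and quotients.

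For the converse, given $\F = \Coker(\f)$ with $\f$ as in the statement, I would read off $\h^0(\F(-1)) = 2$, $\h^1(\F) = 2$, and $\h^0(\F \tensor \Om^1(1)) = 6$ directly from the long exact sequences associated to the resolution (the last by tensoring the resolution with $\Om^1(1)$ and applying Bott's formulas). Semi-stability is verified by analysing potential destabilising subsheaves $\F' \subset \F$ of multiplicity $1 \le r \le 5$: following 2.1.4 \cite{mult_five}, one views $\F$ as a twisted ideal sheaf on the sextic curve $C$ defined by $\det(\f) = 0$ and bounds $\pp(\F')$ in terms of the degree of a subcurve of $C$, obtaining $\pp(\F') \le 1/2$ in all cases, with the equality cases precisely those excluded by the linear independence hypotheses on $\f_{11}$ and $\f_{22}$.

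The principal obstacle will be twofold. First, the Beilinson manipulation trading $\Om^1(1)$-summands for $\O(-3) \oplus \O(1)$ requires a careful simultaneous use of the Euler sequence and its dual, with the cancellations verified to preserve injectivity of the final morphism $2\O(-3) \oplus \O \to \O(-2) \oplus 2\O(1)$ from the exactness of the monad. Second, the semi-stability analysis in the converse must exhaustively cover all multiplicities $r \le 5$; the quartic-subsheaf case $r=4$ is the most delicate and is precisely where the linear independence of the entries of $\f_{11}$ and $\f_{22}$ is essentially used.
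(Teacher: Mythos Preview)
Your outline has the right shape, but the order of your first step is inverted and this creates a genuine gap. You propose to carry out the Beilinson manipulations first and then read off $m=6$ from the final resolution; the paper must do the opposite, and so must you. The monad produces
\[
0 \lra 2\O(-2) \stackrel{\psi}{\lra} 5\O(-2) \oplus m\O(-1) \stackrel{\f}{\lra} \Ker(\eta_{11}) \oplus 5\O \lra \F \lra 0
\]
with $\psi_{11}=0$, $\f_{12}=0$, and the Euler-sequence substitutions you have in mind require knowing that $\Coker(\psi_{12})\isom 2\Om^1(1)$ and $\Ker(\eta_{11})\isom 2\Om^1$. These identifications are only available once $m=6$ is established. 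The paper first bounds $m\le 7$ because $\F$ surjects onto $\Coker(\f_{11})$, excludes $m=7$ since that cokernel would destabilise, and then eliminates $m\le 5$ by the argument of 3.1.3 \cite{mult_five}. Only after $m=6$ is fixed do the ranks line up. You also omit a step the paper needs that has no analogue in \ref{5.1} or \ref{6.1}: to identify $\Ker(\eta_{11})\isom 2\Om^1$, one passes to the dual sheaf $\F^\D$, whose Beilinson monad is the transpose of that of $\F$ (\cite{maican-duality}, lemma 3), and applies the $\Coker(\psi_{12})$ identification on that side.

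For the converse, your plan to bound slopes by viewing $\F$ as a twisted ideal on the sextic $C=\{\det\f=0\}$ is plausible but different from the paper and would need care (for instance, $\F$ need not be torsion-free on $C$). The paper instead uses the snake lemma to produce an extension $0\to\E\to\F\to\C_x\to 0$, where $x$ is the common zero of the entries of $\f_{11}$ and $\E$ lies in the stratum $X_6$ of $\M(6,2)$, already known to be stable by 6.1 \cite{mult_six_two}. Since $\E$ is stable of slope $1/3$, any stable destabilising $\F'\subset\F$ satisfies $\pp(\F'\cap\E)<1/3$, and a quick check leaves only $\F'\isom\O_L$ as a candidate; this last case is killed by a diagram chase showing that the induced $\Coker(\b)$ would have to be torsion-free, which is impossible since $\O_L$ is a summand. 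This route avoids your case-by-case analysis over multiplicities and does not use the linear-independence hypotheses on $\f_{11}$, $\f_{22}$ at the point you suggest (they are used to recognise $\E$ and to ensure $\C_x$ rather than a larger quotient).
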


\begin{proof}
Assume that $\F$ gives a point in $\M(6,3)$ and satisfies the conditions
$\h^0(\F(-1))=2$, $\h^1(\F)=2$. Put $m= \h^0(\F \tensor \Om^1(1))$.
The Beilinson free monad for $\F$
\[
0 \lra 2\O(-2) \lra 5\O(-2) \oplus m\O(-1) \lra m\O(-1) \oplus 5\O \stackrel{\eta}{\lra} 2\O \lra 0
\]
yields the resolution
\[
0 \lra 2\O(-2) \stackrel{\psi}{\lra} 5\O(-2) \oplus m\O(-1) \stackrel{\f}{\lra} \Ker(\eta_{11}) \oplus 5\O \lra \F
\lra 0.
\]
Here $\psi_{11}=0$, $\f_{12}=0$. We have $m \le 7$ because $\F$ maps surjectively to $\Coker(\f_{11})$.
If $m=7$, then $\Coker(\f_{11})$ would be a destabilising quotient sheaf of $\F$.
Thus $m \le 6$. The cases when $m \le 5$ can be eliminated as in the proof of 3.1.3 \cite{mult_five}.
This proves that $m=6$.
Arguing as at 3.2.5 op.cit., we can show that $\Coker(\psi_{12})\isom 2\Om^1(1)$.
According to \cite{maican-duality}, lemma 3, dualising the free monad for $\F$ yields a monad
for the dual sheaf $\F^\D$. The latter gives a point in $\M(6,-3)$, cf. op.cit. From what was said
above it follows that the morphism $\eta_{11}^\T \in \Hom(2\O(-3), 6\O(-2))$ has cokernel $2\Om^1$,
which is equivalent to saying that $\Ker(\eta_{11}) \isom 2\Om^1$.
Presently we arrive at the resolution
\[
0 \lra 5\O(-2) \oplus 2\Om^1(1) \stackrel{\f}{\lra} 2\Om^1 \oplus 5\O \lra \F \lra 0.
\]
Using the Euler sequence we get a resolution
\[
0 \lra 2\O(-3) \oplus 5\O(-2) \oplus 2\Om^1(1) \stackrel{\f}{\lra} 6\O(-2) \oplus 5\O \lra \F \lra 0
\]
in which $\f_{13}=0$. If $\rank(\f_{12}) \le 4$, then $\F$ would map surjectively onto the cokernel
of a morphism $2\O(-3) \to 2\O(-2)$, in violation of semi-stability.
Thus $\rank(\f_{12})=5$ and canceling $5\O(-2)$ we get the resolution
\[
0 \lra 2\O(-3) \oplus 2\Om^1(1) \lra \O(-2) \oplus 5\O \lra \F \lra 0.
\]
Using again the Euler sequence we get the resolution
\[
0 \lra 2\O(-3) \oplus 6\O \stackrel{\f}{\lra} \O(-2) \oplus 5\O \oplus 2\O(1) \lra \F \lra 0.
\]
If $\rank(\f_{22}) \le 4$, then $\F$ would map surjectively to the
cokernel of a morphism $2\O(-3) \to \O(-2) \oplus \O$, in violation of semi-stability.
Thus $\rank(\f_{22})=5$ and canceling $5\O$ we get the resolution
\[
0 \lra 2\O(-3) \oplus \O \stackrel{\f}{\lra} \O(-2) \oplus 2\O(1) \lra \F \lra 0.
\]
The conditions on $\f_{11}$ and on $\f_{22}$ from the proposition follow from the semi-stability
of $\F$.

Conversely, assume that $\F$ has a resolution as in the proposition.
From the snake lemma we get an extension
\[
0 \lra \E \lra \F \lra \C_x \lra 0,
\]
where $\C_x = \Coker(\f_{11})$ and $\E$ has a resolution
\[
0 \lra \O(-4) \oplus \O \stackrel{\psi}{\lra} 2\O(1) \lra \E \lra 0
\]
in which $\psi_{12}= \f_{22}$.
According to 6.1 \cite{mult_six_two}, $\E$ is stable.
It is now straightforward to check that any possibly destabilising subsheaf of $\F$
must be isomorphic to $\O_L$ for some line $L \subset \P^2$.
Assume that $\F$ had such a subsheaf. We would then get a commutative diagram
\[
\xymatrix
{
0 \ar[r] & \O(-1) \ar[r]^-{\ell} \ar[d]^-{\b} & \O \ar[r] \ar[d]^-{\a} & \O_L \ar[r] \ar[d] & 0 \\
0 \ar[r] & 2\O(-3) \oplus \O \ar[r]^-{\f} & \O(-2) \oplus 2\O(1) \ar[r] & \F \ar[r] & 0
}
\]
with injective $\a$ and $\b$. The relation $\f_{22} \b_{21} = \a_{21} \ell$ shows that
$\b_{21}$ is a multiple of $\ell$ and that $\a_{21}$ is a multiple of $\f_{22}$.
Thus $\Coker(\a)$ is torsion-free. Since $\Coker(\b)$ maps injectively to $\Coker(\a)$,
it follows that $\Coker(\b)$ is also torsion-free. This is absurd because $\O_L$ is a direct
summand of $\Coker(\b)$.
\end{proof}

\noi
Let $\W_6 = \Hom(2\O(-3) \oplus \O, \O(-2) \oplus 2\O(1))$ and let $W_6 \subset \W_6$
be the open subset of morphisms $\f$ as in proposition \ref{8.1}.
Let
\[
G_6 = (\Aut(2\O(-3) \oplus \O) \times \Aut(\O(-2) \oplus 2\O(1)))/\C^*
\]
be the natural group acting by conjugation on $\W_6$.
Let $X_6 \subset \M(6,3)$ be the image of $W_6$ under the canonical morphism
$\f \mapsto [\Coker(\f)]$.

\begin{prop}
\label{8.2}
There exists a geometric quotient of $W_6$ by $G_6$, which is a proper open subset of a
fibre bundle over $\P^2 \times \P^2$ with fibre $\P^{25}$.
Moreover, $W_6/G_6$ is isomorphic to $X_6$.
In particular, $X_6$ has codimension $8$.
\end{prop}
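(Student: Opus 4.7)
The plan is to follow almost verbatim the template established in propositions \ref{4.1}, \ref{5.3}, and \ref{6.2}: construct $W_6/G_6$ as a projective bundle over a quotient of the ``diagonal'' entries $(\f_{11},\f_{22})$, then identify it with $X_6$ by invoking the categorical quotient argument from \ref{2.2}.

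First I would introduce the $G_6$-invariant open subset $W_6' \subset \W_6$ of morphisms $\f$ for which $\f_{11}$ and $\f_{22}$ each have linearly independent entries (dropping the injectivity requirement that defines $W_6$). Since $\f_{12} \in \Hom(\O,\O(-2))$ is automatically zero, $W_6'$ is the trivial vector bundle over $U_1 \times U_2$ with fibre $\Hom(2\O(-3),2\O(1)) \isom \C^{60}$, where $U_i$ denotes the appropriate open set of pairs of linearly independent one-forms. Modulo the diagonal automorphism groups one has $U_1/(\GL(2,\C) \times \C^*) \isom \P^2$ and analogously for $U_2$, so the base quotient is $\P^2 \times \P^2$. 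Next I would introduce the $G_6$-invariant subset $\Sigma \subset W_6'$ of morphisms satisfying
\[
\f_{21} = v \f_{11} + \f_{22} e, \qquad v \in \Hom(\O(-2),2\O(1)), \quad e \in \Hom(2\O(-3),\O),
\]
which corresponds exactly to the effect of the off-diagonal unipotent part of $G_6$ on $\f_{21}$. As in proof \ref{4.1}, the key technical point is to show that $\Sigma$ is a sub-bundle of $W_6'$, i.e.\ that the dimension of the kernel $K(\f_{11},\f_{22})$ of the map $(v,e) \mapsto v\f_{11} + \f_{22} e$ is independent of $(\f_{11},\f_{22}) \in U_1 \times U_2$.

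This is the main computational obstacle, but it is straightforward. Writing $\f_{11} = [\ell_1,\ell_2]$ and $\f_{22} = [\ell_3,\ell_4]^\T$ with $\ell_1,\ell_2$ linearly independent and $\ell_3,\ell_4$ linearly independent, the equation $v\f_{11} + \f_{22} e = 0$ yields $v_i \ell_j = -\ell_{i+2} e_j$ for $i,j \in \{1,2\}$. Using that $\gcd(\ell_1,\ell_2) = 1$ one deduces that $\ell_3 \mid v_1$ and $\ell_4 \mid v_2$, writing $v_1 = \ell_3 a_1$ and $v_2 = \ell_4 a_2$ with $a_1,a_2 \in \SS^2 V^*$. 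Comparing the resulting expressions for $e_j$ forces $a_1 = a_2 = a$, so $K(\f_{11},\f_{22})$ is parametrised by a single quadratic form $a \in \SS^2 V^*$ and has dimension $6$, independently of the chosen pair. Consequently $\Sigma$ has constant fibre dimension $40 - 6 = 34$, and the quotient bundle $W_6'/\Sigma$ has rank $60 - 34 = 26$.

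Arguing then as at 2.2.2 of \cite{mult_five}, $W_6'/\Sigma$ is $G_6$-linearised and descends to a rank-$26$ vector bundle $F$ over $\P^2 \times \P^2$, and $\P(F)$ is the geometric quotient of $W_6' \setminus \Sigma$ by $G_6$. Since $W_6$ is the open $G_6$-invariant subset of injective morphisms inside $W_6' \setminus \Sigma$, the geometric quotient $W_6/G_6$ exists as a proper open subset of $\P(F)$, which is a $\P^{25}$-bundle over $\P^2 \times \P^2$. The categorical quotient argument at \ref{2.2} then transfers verbatim to show that the canonical map $W_6 \to X_6$ is a categorical quotient, so by uniqueness $W_6/G_6 \isom X_6$. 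Finally, since $\dim X_6 = 2 + 2 + 25 = 29$ while $\dim \M(6,3) = 37$, the stratum $X_6$ has codimension $8$.
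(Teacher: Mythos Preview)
Your construction of the geometric quotient $W_6/G_6$ as an open subset of a $\P^{25}$-bundle over $\P^2 \times \P^2$ is correct and matches the paper's argument (you even supply the verification that $\Sigma$ is a sub-bundle, which the paper outsources to \cite{mult_five}).

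The gap is in the last step. The categorical-quotient argument of \ref{2.2} does \emph{not} transfer verbatim: that argument rests on an explicit Beilinson spectral-sequence computation carried out under the hypotheses $\h^0(\F(-1))=0$, $\h^1(\F)=0$, which characterise the stratum $X = X_0 \cup X_1 \cup X_2$ covered by $\W = \W_2$. (This is why the paper can legitimately cite \ref{2.2} in the proof of \ref{4.1}: one has $W_2 \subset W$ literally.) For $X_6$ the cohomological invariants are $\h^0(\F(-1))=2$, $\h^1(\F)=2$, $\h^0(\F \otimes \Om^1(1))=6$, and the spectral-sequence picture is entirely different. The paper therefore carries out a fresh and substantially more delicate Beilinson computation in the proof of \ref{8.2}: it uses the spectral sequence II, invokes duality to identify $\Ker(\f_2) \simeq 2\Om^1$, analyses the auxiliary sheaf $\CC = \Ker(\f_2)/\Im(\f_1)$ to extract the point $x$, and then performs several rank estimates and cancellations (ruling out destabilising quotients at each step) before arriving at a morphism $\f \in W_6$ with $\Coker(\f) \simeq \F$. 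None of this is contained in, or an obvious variant of, the argument at \ref{2.2}.

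So while your overall strategy is right---build the quotient, then show the canonical map to $X_6$ is an isomorphism via a Beilinson-type inverse construction---the phrase ``transfers verbatim'' hides the main content of the proof. You need to actually produce, for $\F \in X_6$, the resolution of \ref{8.1} by algebraic operations on the Beilinson data of $\F$; this is what guarantees that $W_6 \to X_6$ is a categorical quotient and hence that the bijection $W_6/G_6 \to X_6$ is an isomorphism.
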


\begin{proof}
The construction of $W_6/G_6$ is nearly the same as the construction of the quotient at 2.2.4 \cite{mult_five}.
We consider the open subset $W_6' \subset \W_6$ given by the following conditions:
$\f_{11}$ has linearly independent entries, $\f_{22}$ has linearly independent entries and
\[
\f_{21} \neq v \f_{11} + \f_{22} u \qquad \text{for any} \quad u \in \Hom(2\O(-3),\O), \quad v \in \Hom(\O(-2),2\O(1)).
\]
There exists a geometric quotient $W_6'/G_6$, which is a fibre bundle over $\P^2 \times \P^2$
with fibre $\P^{25}$. The quotient map takes $\f$ to $((x,y), \langle \f_{21} \rangle)$,
where $x$ is the common zero of the entries of $\f_{11}$, $y$ is the common zero of the entries of $\f_{22}$
and $\langle \f_{21} \rangle$ denotes the line spanned by the image of $\f_{21}$ in the cokernel of the
canonical morphism
\begin{multline*}
\Hom(2\O(-3), \O(-2)) \tensor \Hom(\O(-2), 2\O(1)) \oplus \Hom(2\O(-3), \O) \tensor \Hom(\O, 2\O(1)) \\
\lra \Hom(2\O(-3), 2\O(1)).
\end{multline*}
The quotient $W_6/G_6$ is a proper open subset of the projective variety $W_6'/G_6$.

Fix $\F$ in $X_6$. The first term of the Beilinson spectral sequence II converging to $\F$ has display diagram
\[
\xymatrix
{
5\O(-2) \ar[r]^-{\f_1} & 6\O(-1) \ar[r]^-{\f_2} & 2\O \\
2\O(-2) \ar[r]^-{\f_3} & 6\O(-1) \ar[r]^-{\f_4} & 5\O
}.
\]
Arguing as at 3.2.5 \cite{mult_five}, we can show that $\Coker(\f_3) \isom 2\Om^1(1)$.
The sheaf $\F^\D(1)$ also gives a point in $X_6$ and the associated Beilinson spectral sequence
has display diagram
\[
\xymatrix
{
5\O(-2) \ar[r]^-{\f_4^\T} & 6\O(-1) \ar[r]^-{\f_3^\T} & 2\O \\
2\O(-2) \ar[r]^-{\f_2^\T} & 6\O(-1) \ar[r]^-{\f_1^\T} & 5\O
}.
\]
Thus $\Coker(\f_2^\T) \isom 2\Om^1(1)$, which is equivalent to saying that $\Ker(\f_2) \isom 2\Om^1$.
Denote $\CC = \Ker(\f_2)/\Im(\f_1)$. We have an exact sequence
\[
0 \lra \Ker(\f_1) \lra 2\O(-3) \oplus 5\O(-2) \stackrel{\xi}{\lra} 6\O(-2) \lra \CC \lra 0.
\]
Notice that $\rank(\xi_{12})=5$, otherwise $\F$ would map surjectively onto the cokernel of a morphism
$2\O(-3) \to 2\O(-2)$, in violation of semi-stability.
It is clear now that $\CC$ is isomorphic to the structure sheaf $\C_x$ of a closed point $x \in \P^2$
and $\Ker(\f_1) \isom \O(-4)$.
We have the exact sequence
\[
0 \lra 2\Om^1(1) \lra 5\O \lra \Coker(\f_4) \lra 0,
\]
which leads to an exact sequence
\[
0 \lra 6\O \lra 5\O \oplus 2\O(1) \lra \Coker(\f_4) \lra 0.
\]
The exact sequence (2.2.5) \cite{drezet-maican} reads
\[
0 \lra \O(-4) \stackrel{\f_5}{\lra} \Coker(\f_4) \lra \F \lra \C_x \lra 0.
\]
Put $\E= \Coker(\f_5)$. Clearly $\f_5$ factors through $5\O \oplus 2\O(1)$,
hence we have a resolution
\[
0 \lra \O(-4) \oplus 6\O \stackrel{\psi}{\lra} 5\O \oplus 2\O(1) \lra \E \lra 0.
\]
We have $\rank(\psi_{12})=5$, otherwise $\E$, hence also $\F$, would have a subsheaf
that is the cokernel of a morphism $2\O \to 2\O(1)$, in violation of semi-stability.
Canceling $5\O$ we get the resolution
\[
0 \lra \O(-4) \oplus \O \lra 2\O(1) \lra \E \lra 0.
\]
Combining this with the standard resolution of $\C_x$ tensored with $\O(-2)$ leads us to the
exact sequence
\[
0 \lra \O(-4) \lra \O(-4) \oplus 2\O(-3) \oplus \O \lra \O(-2) \oplus 2\O(1) \lra \F \lra 0.
\]
Since $\Ext^1(\C_x, 2\O(1))=0$, we can argue as at 2.3.2 \cite{mult_five}
to prove that $\F$ would be a split extension of $\C_x$ by $\E$ if the morphism
$\O(-4) \to \O(-4)$ in the above complex were non-zero.
Canceling $\O(-4)$ we get $\f \in W_6$ such that $\F \isom \Coker(\f)$.
\end{proof}

\begin{prop}
\label{8.3}
The sheaves $\F$ giving points in $X_6$ are precisely the non-split extension sheaves of the form
\[
0 \lra \E \lra \F \lra \C_x \lra 0,
\]
where $\E$ gives a point in the stratum $X_6$ of $\M(6,2)$
(cf. 6.1 \cite{mult_six_two}) and $\C_x$ is the structure sheaf of a closed point $x \in \P^2$.
The generic sheaves in $X_6$ have the form $\O_C(2)(P_1-P_2)$, where $C \subset \P^2$
is a smooth sextic curve and $P_1, P_2$ are distinct points on $C$.
In particular, $X_6$ is contained in the closure of $X_5$.
\end{prop}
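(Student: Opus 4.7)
The only-if direction is essentially already contained in the proof of Proposition \ref{8.1}: starting from $\f \in W_6$, the snake lemma applied to $\f_{11}$ (whose cokernel is the skyscraper sheaf $\C_x$ at the common vanishing point of its two linearly independent entries) produces a short exact sequence $0 \to \E \to \F \to \C_x \to 0$, in which $\E$ has a resolution of the form $0 \to \O(-4) \oplus \O \to 2\O(1) \to \E \to 0$. By 6.1 \cite{mult_six_two} this places $\E$ in the stratum $X_6$ of $\M(6,2)$. The extension must be non-split, because a split extension would exhibit $\C_x$ as a zero-dimensional subsheaf of $\F$, contradicting the purity forced by semi-stability.

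For the converse, given a non-split extension $0 \to \E \to \F \to \C_x \to 0$ with $\E$ in the stratum $X_6$ of $\M(6,2)$, I would feed into the horseshoe lemma the above resolution of $\E$ together with the twisted Koszul resolution
\[
0 \lra \O(-4) \lra 2\O(-3) \lra \O(-2) \lra \C_x \lra 0,
\]
obtaining a complex
\[
0 \lra \O(-4) \lra \O(-4) \oplus 2\O(-3) \oplus \O \lra \O(-2) \oplus 2\O(1) \lra \F \lra 0.
\]
The principal obstacle is to show that the component $\O(-4) \to \O(-4)$ of the connecting morphism is non-zero; this is done as in 2.3.2 \cite{mult_five} (and recalled in the proof of Proposition \ref{8.2}), the key input being the vanishing of $\Ext^1(\C_x, 2\O(1))$, which ensures that the vanishing of that component would force the horseshoe complex to decompose as a direct sum of the two input resolutions and hence split the extension. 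Once it is an isomorphism, cancellation of $\O(-4)$ yields $\f \in W_6$ with $\Coker(\f) \isom \F$; the semi-stability conditions on $\f_{11}$ and $\f_{22}$ are automatic from the Koszul piece and from the resolution of $\E$, respectively.

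For the description of generic sheaves, I would invoke 6.1 \cite{mult_six_two}, according to which the generic element of the stratum $X_6$ of $\M(6,2)$ is of the form $\O_C(2)(-P_2)$ with $C \subset \P^2$ a smooth sextic and $P_2 \in C$. Given any further point $P_1 \in C$ distinct from $P_2$, the natural sequence
\[
0 \lra \O_C(2)(-P_2) \lra \O_C(2)(P_1-P_2) \lra \C_{P_1} \lra 0
\]
is non-split (the middle term being pure, since it is a line bundle on $C$), so the first part of the proposition yields $\O_C(2)(P_1-P_2) \in X_6$. The dimension count $\dim \P(\SS^6 V^*) + 1 + 1 = 27 + 2 = 29 = 37 - 8 = \dim X_6$ confirms genericity.

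Finally, for the inclusion $X_6 \subset \overline{X}_5$, I would fix a generic $\O_C(2)(P_1 - P_2) \in X_6$ and choose two further points $Q_1, Q_2 \in C$, distinct from each other and from $P_1, P_2$. By Proposition \ref{6.3} the sheaf $\O_C(2)(P_1 + Q_1 - P_2 - Q_2)$ represents a point of $X_5$; letting $Q_1$ approach $Q_2$ along $C$ gives a family in $X_5$ specialising to the fixed generic point of $X_6$, completing the argument.
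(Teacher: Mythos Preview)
Your proposal is correct and follows essentially the same line as the paper's proof: the only-if direction via the snake lemma from \ref{8.1}, the converse via the horseshoe lemma and the cancellation argument already carried out in \ref{8.2}, the identification of generic sheaves via 6.1 \cite{mult_six_two}, and the degeneration $Q_1 \to Q_2$ (the paper writes $Q_2 \to Q_1$, which is the same) to place $X_6$ in $\overline{X}_5$. Your added details --- the explicit purity argument for non-splitness, the verification that $\f_{11}$ and $\f_{22}$ inherit the required conditions, and the dimension count supporting genericity --- are all sound and merely make explicit what the paper leaves implicit.
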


\begin{proof}
We saw at \ref{8.1} that every $\F$ in $X_6$ is an extension as above.
Conversely, if $\F$ is a non-split extension of $\C_x$ by $\E$, then we can apply the
horseshoe lemma as in the proof of \ref{8.2} to construct $\f \in W_6$ such that $\F \isom \Coker(\f)$.

According to 6.1 \cite{mult_six_two}, generically $\E$ is isomorphic to $\O_C(2)(-P_2)$
for some smooth sextic curve $C \subset \P^2$ and some closed point $P_2 \in C$.
Thus, generically, $\F \isom \O_C(2)(P_1-P_2)$.
Recall from \ref{6.3} that the generic sheaves in $X_5$ have the form
$\O_C(2)(P_1 + Q_1 -P_2 -Q_2)$.
Making $Q_2$ converge to $Q_1$ we produce a sequence of points in $X_5$ converging
to $\F$. Thus $X_6 \subset \overline{X}_5$.
\end{proof}


\section{The smallest stratum}

\begin{prop}
\label{9.1}
The sheaves $\F$ giving pints in $\M(6,3)$ and satisfying the condition $\H^1(\F(1)) \neq 0$
are precisely the sheaves of the form $\O_C(2)$, where $C \subset \P^2$ is a sextic curve.
The set of isomorphism classes of such sheaves, denoted $X_7$, is a closed subvariety of
$\M(6,3)$ that is canonically isomorphic to $\P(\SS^6 V^*)$.
Moreover, $X_7$ is contained in the closure of $X_6$.
\end{prop}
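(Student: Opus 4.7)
The plan is to prove the three assertions in sequence: first the cohomological characterization, then the isomorphism with $\P(\SS^6 V^*)$, and finally the closure statement.

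For the characterization, I would use Serre duality in the form $\H^1(\F(1))^* \isom \H^0(\F^\D(-1))$, where $\F^\D = \mathcal{E}xt^1(\F, \omega_{\P^2})$. A non-zero section $s$ of $\F^\D(-1)$ gives a morphism $\O \to \F^\D(-1)$. Since $\F^\D(-1)$ is pure one-dimensional (duality preserves purity), the image $\G$ is a pure one-dimensional quotient of $\O$, hence $\G \isom \O_D$ for some plane curve $D$ of degree $d \le 6$. The slope of $\O_D$ is $(3-d)/2$, while $\F^\D(-1)$ is Gieseker semi-stable of slope $-3/2$. The inequality $(3-d)/2 \le -3/2$ forces $d = 6$, and the Euler characteristic count $\chi(\F^\D(-1)) = -9 = \chi(\O_D)$ for $d=6$ shows that $\O_D = \F^\D(-1)$. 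Dualising once more and using the adjunction $\omega_D \isom \O_D(3)$ for a plane sextic yields $\F \isom \O_D(2)$. Conversely, for any sextic $D$ the sheaf $\O_D(2)$ has Hilbert polynomial $6m+3$, is stable (its only proper subsheaves are ideal-type and have strictly smaller slope), and satisfies $\h^1(\O_D(3)) = 1$ by the Riemann--Roch/adjunction computation $\chi(\O_D(3)) = 9$ together with $\h^0(\O_D(3)) = 10$.

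For the isomorphism $X_7 \isom \P(\SS^6 V^*)$, I would use the resolution listed in the table, $0 \lra \O(-4) \stackrel{\f}{\lra} \O(2) \lra \F \lra 0$, where $\f \in \SS^6 V^* \setminus \{ 0 \}$ is the defining equation of the sextic $\operatorname{supp}(\F) = C$. The vector space $\W_7 = \SS^6 V^* \setminus \{ 0 \}$ is acted on by $G_7 = (\C^* \times \C^*)/\C^* \isom \C^*$ by scaling, with geometric quotient $\P(\SS^6 V^*)$. This family of cokernels is flat in $\f$, so by the universal property of $\M(6,3)$ it induces a morphism $\P(\SS^6 V^*) \to \M(6,3)$ whose image is $X_7$ and which is bijective onto $X_7$. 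The same argument as in \ref{2.2} shows that the canonical map $\W_7 \to X_7$ is a categorical, and in fact geometric, quotient for $G_7$; the uniqueness of categorical quotients then identifies $X_7$ with $\P(\SS^6 V^*)$. Closedness of $X_7$ in $\M(6,3)$ follows either from upper semi-continuity of $\f \mapsto \h^1(\F(1))$ in flat families or simply from the fact that $\P(\SS^6 V^*)$ is projective and the morphism to $\M(6,3)$ has closed image.

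For the inclusion $X_7 \subset \overline{X}_6$, I would fix a generic sheaf in $X_7$, which by the first part has the form $\O_C(2)$ for a smooth sextic $C$. Choose two distinct points $P_1, P_2 \in C$; by proposition \ref{8.3} the sheaf $\O_C(2)(P_1 - P_2)$ represents a point of $X_6$. Letting $P_1$ converge to $P_2$ along $C$ produces a one-parameter family in $X_6$ whose limit in $\M(6,3)$ is $\O_C(2)$, placing the generic point of $X_7$ in $\overline{X}_6$; since $X_7 \isom \P(\SS^6 V^*)$ is irreducible, the inclusion extends to all of $X_7$.

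The main obstacle I anticipate is the semi-stability step in the first part: the argument hinges on verifying that $\F^\D(-1)$ is itself semi-stable (so that slope comparisons are legal), and on handling the case $d = 6$ with an exact Euler characteristic count rather than a strict slope inequality. Everything else is a routine application of the universal property of the moduli space and the explicit description of generic sheaves already established in propositions \ref{6.3} and \ref{8.3}.
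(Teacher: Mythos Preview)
Your proposal is correct and follows essentially the same route as the paper: pass to the dual sheaf $\F^\D(-1)$ in $\M(6,-9)$, use a non-zero global section to produce a subsheaf $\O_D$, rule out $d\le 5$ by the slope inequality from semi-stability, and for the closure statement degenerate $P_1\to P_2$ in the generic form $\O_C(2)(P_1-P_2)$ of \ref{8.3}. The paper leaves the isomorphism $X_7\simeq\P(\SS^6V^*)$ as a one-line assertion, whereas you spell out the categorical-quotient argument, but this is the same content; the potential obstacle you flag (semi-stability of $\F^\D(-1)$) is exactly what \cite{maican-duality} supplies and what the paper uses implicitly.
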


\begin{proof}
Let $\F$ give a point in $\M(6,3)$ and satisfy the condition $\H^1(\F(1))\neq 0$.
The sheaf $\F^{\D}(-1)$ gives a point in $\M(6,-9)$ and has a non-vanishing group of global sections.
Arguing as at 2.1.3 \cite{drezet-maican} we can show that there is an injective morphism
$\O_C \to \F^\D(-1)$ for some curve $C \subset \P^2$ of degree at most $6$.
If $\deg(C) \le 5$, then $\O_C$ would destabilise $\F^\D(-1)$.
Thus $C$ is a sextic curve and $\F^\D(-1) \isom \O_C$, hence $\F \isom \O_C(1)^\D \isom \O_C(2)$.
Conversely, for any sextic curve $C$ the sheaf $\O_C(2)$ is stable and satisfies the
cohomological condition from the proposition.

To prove that $X_7 \subset \overline{X}_6$ make $P_2$ converge to $P_1$ in proposition
\ref{8.3} and note that $[\O_C(2)(P_1 - P_2)]$ converges to $[\O_C(2)]$ if $C$ is smooth.
\end{proof}

\noi
In the remaining part of this section we will prove that $\M(6,3)$ is the union of $X_0, \ldots, X_7$,
i.e. that there are no other semi-stable sheaves on $\P^2$ with Hilbert polynomial $\PP(t)=6t+3$
beside those we have discussed so far.

\begin{prop}
\label{9.2}
Let $\F$ give a point in $\M(6,3)$ and satisfy the condition $\h^0(\F(-1)) \ge 3$ or the condition
$\h^1(\F) \ge 3$. Then $\F \isom \O_C(2)$ for some sextic curve $C \subset \P^2$.
\end{prop}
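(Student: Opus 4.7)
The plan is to reduce via duality and then extract structural information from the global sections. First, the duality automorphism $\F \mapsto \F^\D(1)$ on $\M(6,3)$ satisfies, via Serre duality for one-dimensional sheaves, the identities $\h^0(\F^\D(1)(-1)) = \h^1(\F)$ and $\h^1(\F^\D(1)) = \h^0(\F(-1))$. This interchanges the two hypotheses of the proposition, so it suffices to treat the case $\h^0(\F(-1)) \geq 3$.

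Under this assumption, I would introduce the subsheaf $\G \subseteq \F(-1)$ defined as the image of the evaluation morphism $\H^0(\F(-1)) \otimes \O \to \F(-1)$. Then $\G$ is pure one-dimensional (as a subsheaf of the pure sheaf $\F(-1)$), globally generated, and tautologically $\h^0(\G) = \h^0(\F(-1)) \geq 3$. Since $\G(1) \hookrightarrow \F$, the semi-stability of $\F$ (slope $1/2$) yields $\chi(\G) \leq -d/2$, where $d$ denotes the multiplicity of $\G$. The central combinatorial step would then be a case analysis on $d \in \{1, 2, 3, 4, 5, 6\}$: for $d \leq 3$ every globally generated pure one-dimensional sheaf of multiplicity $d$ with $\h^0 \geq 3$ has $\chi \geq 3$, contradicting $\chi \leq -d/2 \leq -1/2$; for $d = 4$ or $5$ I would enumerate the possible Fitting supports (integral, reducible, or non-reduced curves of that degree) and apply Clifford's inequality on each integral component, together with direct numerical checks, to rule out the combination $\h^0 \geq 3$ and $\chi \leq -d/2$. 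This forces $d = 6$. In this case a further argument (using that the equality $\H^0(\G) = \H^0(\F(-1))$ is tautological and that any proper globally generated subsheaf of $\F(-1)$ of the same multiplicity and with the same $\H^0$ would have stalks incompatible with the $\O_C$-module structure on the sextic Fitting support $C$) forces $\G = \F(-1)$. Hence $\F(-1)$ is itself globally generated with $\h^0 = 3$ and $\chi = -3$.

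In the final step, the surjection $3\O \twoheadrightarrow \F(-1)$ has kernel $K$ a rank-$3$ torsion-free sheaf with $c_1 = -6$, $\chi = 6$, $\H^0(K) = 0$. The inclusion $K \subseteq 3\O$ and the vanishing $\Ext^1(\O(-5), \Omega^1(1)) = \H^1(\Omega^1(6)) = 0$ (by Bott's formula) force the splitting $K \cong \Omega^1(1) \oplus \O(-5)$. The composite $\O(-5) \to 3\O \to \O(1)$ through the Euler surjection is then multiplication by some $f \in \SS^6 V^*$ defining a sextic curve $C$, and tracing through the resulting resolution identifies $\F(-1) \cong \O_C(1)$, hence $\F \cong \O_C(2)$.

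The principal obstacle is the multiplicity case analysis: ruling out $d \leq 5$ requires careful classification of globally generated low-multiplicity one-dimensional sheaves on $\P^2$ across all possible Fitting-support structures, and then the identification $\G = \F(-1)$ in the $d = 6$ case must be carried out carefully to exclude ``base points'' of the generating system of sections.
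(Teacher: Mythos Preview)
Your duality reduction is correct and matches the paper's first move. After that, however, the two arguments diverge substantially, and your outline has two genuine gaps that I do not see how to close along the lines you indicate.

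\textbf{The step $\G=\F(-1)$ is not justified.} When $d=6$ you have $\G\subseteq\F(-1)$ with equal multiplicity, so the quotient is zero-dimensional of some length $\ell\ge 0$. Stability of $\F$ (which you should note explicitly: by Remark~\ref{3.3} and Proposition~\ref{5.2} any properly semi-stable point satisfies $\h^0(\F(-1))\le 1$) gives only $\chi(\G)<-3$ when $\ell>0$, which is perfectly consistent; nothing about ``stalks incompatible with the $\O_C$-module structure'' forces $\ell=0$. What would work is to show directly that the globally generated sheaf $\G$ is isomorphic to $\O_C(1)$ for a sextic $C$, and \emph{then} deduce $\G=\F(-1)$ from stability (since $\O_C(2)\subseteq\F$ with equal Hilbert polynomial). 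But that classification of globally generated one-dimensional sheaves with $d=6$, $\h^0\ge 3$, $\chi\le -3$ is essentially as hard as the proposition itself, and you have not carried it out.

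\textbf{The kernel identification is circular.} You pass from ``$K\subseteq 3\O$ and $\Ext^1(\O(-5),\Om^1(1))=0$'' to ``$K\simeq\Om^1(1)\oplus\O(-5)$''. The $\Ext$-vanishing only helps once you already know $K$ sits in an extension $0\to\Om^1(1)\to K\to\O(-5)\to 0$, and that extension structure comes from factoring the evaluation map $3\O\to\F(-1)$ through $\O(1)$ via the Euler surjection. But the identification $\H^0(\F(-1))\simeq V^*$ needed for that factoring is exactly the statement $\F(-1)\simeq\O_C(1)$ you are trying to prove. Without it, $K$ is just some rank-$3$ bundle with $c_1=-6$, $c_2=6$, $\h^0=0$, and there is no a~priori reason it must be $\Om^1(1)\oplus\O(-5)$. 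Relatedly, you assert $\h^0(\F(-1))=3$ exactly, but the hypothesis gives only $\ge 3$; nothing in your argument bounds it from above.

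For comparison, the paper avoids all of this by a different route: a single section gives $\O_C\hookrightarrow\F(-1)$ for a curve $C$ of degree $\le 6$; stability forces $\deg C\in\{5,6\}$; in either case one peels off a skyscraper quotient $\C_x$ (or reduces to such), shows the kernel $\F'$ lies in the top stratum of $\M(6,2)$ already classified in \cite{mult_six_two}, and combines the known resolution of $\F'$ with that of $\C_x$ via the horseshoe lemma to compute $\h^1(\F(1))=1$. Proposition~\ref{9.1} then finishes. The point is that the paper never needs global generation of $\F(-1)$ or a direct identification of a syzygy bundle; it leverages the adjacent moduli space instead.
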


\begin{proof}
Let $\F$ give a point in $\M(6,3)$ and satisfy the condition $\h^0(\F(-1)) \ge 3$.
Arguing as in 2.1.3 \cite{drezet-maican} we see that there is an injective morphism
$\O_C \to \F(-1)$ for some curve $C \subset \P^2$ of degree at most $6$.
According to remark \ref{3.3} and proposition 5.2, $\F$ is stable.
Thus $\pp(\O_C) < -1/2$, so $C$ has degree $5$ or $6$.
Assume first that $\deg(C)=6$.
The quotient sheaf $\CC = \F/\O_C(1)$ has length $6$ and dimension zero.
Let $\CC' \subset \CC$ be a subsheaf of length $5$ and let $\F'$ be its preimage in $\CC$.
We have an exact sequence
\[
0 \lra \F' \lra \F \lra \C_x \lra 0
\]
in which $\C_x$ is the structure sheaf of a closed point $x \in \P^2$.
We claim that $\F'$ is semi-stable. If this were not the case, then $\F'$ would have a destabilising
subsheaf $\F''$, which may be assumed to be stable.
In fact, $\F''$ must give a point in $\M(5,2)$ because $1/3 < \pp(\F'') < 1/2$.
According to \cite{mult_five}, section 2, we have the inequality $\h^0(\F''(-1)) \le 1$.
The quotient sheaf $\F/\F''$ has Hilbert polynomial $\PP(t)=t+1$ and no zero-dimensional torsion.
Thus $\F/\F'' \isom \O_L$ for some line $L \subset \P^2$.
It follows that
\[
\h^0(\F(-1)) \le \h^0(\F''(-1)) + \h^0(\O_L(-1)) \le 1,
\]
contradicting our hypothesis.
This proves that $\F'$ gives a point in $\M(6,2)$.
We have the relation $\h^0(\F'(-1)) \ge 2$ hence,
according to \cite{mult_six_two}, there is a resolution
\[
0 \lra \O(-4) \oplus \O \lra 2\O(1) \lra \F' \lra 0.
\]
Combining this with the standard resolution of $\C_x$ tensored with $\O(1)$ we get the exact sequence
\[
0 \lra \O(-1) \lra \O(-4) \oplus 3\O \lra 3\O(1) \lra \F \lra 0.
\]
From this we obtain the relation $\h^1(\F(1))=1$, hence,
by \ref{9.1}, $\F$ is isomorphic to $\O_C(2)$.

Assume now that $C$ has degree $5$. The quotient sheaf $\F/\O_C(1)$ has Hilbert polynomial
$\PP(t)=t+3$. If $\F/\O_C(1)$ had zero-dimensional torsion different from zero, then $\F$ would map
surjectively onto the the structure sheaf $\C_x$ of a closed point $x \in \P^2$.
This situation has already been examined.
Thus we may assume that $\F/\O_C(1)$ has no zero-dimensional torsion,
i.e. that $\F/\O_C(1) \isom \O_L(2)$ for some line $L \subset \P^2$.
We apply the horseshoe lemma to the extension
\[
0 \lra \O_C(1) \lra \F \lra \O_L(2) \lra 0,
\]
to the standard resolution of $\O_C(1)$ and to the resolution
\[
0 \lra \O(-1) \lra 3\O \lra 2\O(1) \lra \O_L(2) \lra 0.
\]
We arrive at the resolution
\[
0 \lra \O(-1) \lra \O(-4)  \oplus 3\O \lra 3\O(1) \lra \F \lra 0
\]
leading to the conclusion, as we saw above, that $\F$ is isomorphic to $\O_S(2)$,
where $S = C \cup L$.
\end{proof}

\begin{prop}
\label{9.3}
There are no sheaves $\F$ giving points in $\M(6,3)$ and satisfying the cohomological conditions
\[
\h^0(\F(-1)) \le 1, \qquad \h^1(\F) \ge 2, \qquad \h^1(\F(1)) =0.
\]
\end{prop}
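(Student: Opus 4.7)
My plan is to follow the strategy of Proposition~\ref{8.1}, extracting a destabilising quotient of $\F$ from the Beilinson free monad. First, by Proposition~\ref{9.2}, if $\h^1(\F) \ge 3$ then $\F \isom \O_C(2)$ for a sextic curve $C \subset \P^2$, but such a sheaf has $\h^1(\F(1)) = \h^1(\O_C(3)) = \h^0(\O_C) = 1$, contradicting the hypothesis; hence $\h^1(\F) = 2$. Similarly, $\h^0(\F(-2)) = 0$: any non-zero section would yield an injection $\O_C(2) \hookrightarrow \F$ with $C$ of degree at least $6$ by slope considerations, which must be an isomorphism by Hilbert polynomial comparison, yielding again $\h^1(\F(1)) \ne 0$. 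Tensoring the Euler sequence with $\F$ gives $\h^0(\F \tensor \Om^1(1)) = \h^1(\F \tensor \Om^1(1)) = m$, while the left-exact portion forces $m \ge 3 \h^0(\F) - \h^0(\F(1)) = 15 - 9 = 6$.

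Setting $a = \h^0(\F(-1)) \in \{0, 1\}$, I would then write the Beilinson free monad
\[
0 \lra a\O(-2) \stackrel{\psi}{\lra} (a+3)\O(-2) \oplus m\O(-1) \stackrel{\f}{\lra} m\O(-1) \oplus 5\O \stackrel{\eta}{\lra} 2\O \lra 0,
\]
with $\F$ the cohomology at the third position. As in the proof of \ref{8.1}, the Beilinson bicomplex forces $\psi_{11} = 0$, $\f_{12} = 0$, and $\eta_{12} = 0$. In particular $\eta_{11} \colon m\O(-1) \to 2\O$ is surjective, the sheaf $K := \Ker(\eta_{11})$ is locally free of rank $m - 2$ with $\chi(K) = -2$, and one obtains the resolution
\[
0 \lra a\O(-2) \lra (a+3)\O(-2) \oplus m\O(-1) \stackrel{\f'}{\lra} K \oplus 5\O \lra \F \lra 0
\]
in which $\f'_{12} \colon m\O(-1) \to K$ still vanishes.

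The block-triangular form of $\f'$ means that the composition $K \oplus 5\O \twoheadrightarrow K \twoheadrightarrow \Coker(\f'_{11})$ vanishes on the image of $\f'$, producing a surjection $\F \twoheadrightarrow \Coker(\f'_{11})$ with $\f'_{11} \colon (a+3)\O(-2) \to K$. Since $a + 3 \le 4$ and $K$ has rank $m - 2 \ge 4$, the cokernel has rank at least $(m-2)-(a+3) = m-a-5$, which is $\ge 1$ unless $(a, m) = (1, 6)$. Whenever this rank is positive, $\Coker(\f'_{11})$ is a two-dimensional sheaf on $\P^2$, onto which the one-dimensional $\F$ cannot surject, a contradiction. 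In the single remaining case $(a, m) = (1, 6)$, either $\f'_{11} \colon 4\O(-2) \to K$ fails to be generically surjective (producing a two-dimensional cokernel, the same contradiction), or it is generically surjective between rank-$4$ bundles, in which case a direct Hilbert polynomial computation gives
\[
\PP_{\Coker(\f'_{11})}(t) = \PP_K(t) - 4\tbinom{t}{2} = (2t^2 - 2) - (2t^2 - 2t) = 2t - 2,
\]
a one-dimensional sheaf of slope $-1 < 1/2$ which destabilises $\F$, again a contradiction.

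The hardest step is establishing the structural vanishings $\psi_{11} = \f_{12} = \eta_{12} = 0$; this is the bicomplex identification already used in the proof of Proposition~\ref{8.1}, which I would invoke directly. The remainder is a rank count together with the short Hilbert polynomial calculation displayed above.
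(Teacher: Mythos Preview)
Your argument is correct and rests on the same key observation as the paper's proof: the block--triangular shape of the Beilinson free monad (namely $\f_{12}=0$) produces a surjection $\F\twoheadrightarrow\Coker(\f'_{11})$, which must then be constrained. The paper's version keeps $p=\h^1(\F)\ge 2$ general, deduces $m-p\le 3$ from that surjection, and then reads off $\h^0(\F(1))\ge 2(p+3)\ge 10>9=\chi(\F(1))$ directly from the resolution, contradicting $\h^1(\F(1))=0$; you instead first pin down $p=2$ via Proposition~\ref{9.2}, use the Euler sequence together with $\h^1(\F(1))=0$ to get $m\ge 6$, and then exploit the same surjection to obtain either a two--dimensional quotient or, in the single case $(a,m)=(1,6)$, a one--dimensional quotient of slope $-1$. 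Both routes are short; the paper's is slightly more uniform (no case $(a,m)=(1,6)$ to treat separately), while yours makes the destabilising quotient explicit. One small remark: the paragraph on $\h^0(\F(-2))=0$ is not needed, since the Beilinson monad you write down only involves the cohomology of $\F(-1)$, $\F\otimes\Om^1(1)$ and $\F$.
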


\begin{proof}
The argument can be found at 7.2 \cite{mult_six_one}.
Denote $p= \h^1(\F)$, $m= \h^0(\F \tensor \Om^1(1))$.
Assume that $\F$ gives a point in $\M(6,3)$ and satisfies the conditions $\h^0(\F(-1))=0$,
$\h^1(\F) \ge 2$. The Beilinson free monad for $\F$
\[
0 \lra 3\O(-2) \oplus m\O(-1) \lra m\O(-1) \oplus (p+3) \O \stackrel{\eta}{\lra} p\O \lra 0
\]
leads to a resolution
\[
0 \lra 3\O(-2) \oplus m\O(-1) \stackrel{\f}{\lra} \Ker(\eta_{11}) \oplus (p+3)\O \lra \F \lra 0
\]
in which $\f_{12}=0$. We have the inequality $m-p = \rank(\Ker(\eta_{11})) \le 3$
because $\f$ is injective. Thus
\[
\h^0(\F(1)) = 3(p+3) + \h^0(\Ker(\eta_{11})(1)) - m \ge 2(p+3) \ge 10
\]
forcing $\h^1(\F(1)) > 0$. Assume now that $\h^0(\F(-1)) =1$.
The Beilinson free monad for $\F$ takes the form
\[
0 \lra \O(-2) \stackrel{\psi}{\lra} 4\O(-2) \oplus m\O(-1) \lra
m\O(-1) \oplus (p+3)\O \stackrel{\eta}{\lra} p\O \lra 0.
\]
We have an exact sequence
\[
0 \lra 4\O(-2) \oplus \Coker(\psi_{21}) \stackrel{\f}{\lra} \Ker(\eta_{11}) \oplus (p+3) \O \lra \F \lra 0
\]
in which $\f_{12}=0$. As above, the relation $m-p = \rank(\Ker(\eta_{11})) \le 4$ follows from the
injectivity of $\f$.
If $m-p=4$, then $\Coker(\f_{11})$ would be a destabilising quotient sheaf of $\F$.
Thus $m-p \le 3$ and we conclude as above that $\h^0(\F(1)) > 0$.
\end{proof}


\section{The complement of a codimension $2$ subvariety as a blow-up}

\noi
We saw in section 1 that $\M(6,3)$ is birational to $\N(6,3,3)$, more precisely
the complement of the divisor $\overline{X}_1$ is isomorphic to an open subset
of the Kronecker moduli space. In this section we shall obtain a more detailed picture
of the birational geometry of $\M(6,3)$, namely we shall prove that the complement
of a codimension $2$ subvariety is isomorphic to an open subset of the blow-up
of $\N(6,3,3)$ at the special point represented by the matrix
\[
\left[
\ba{c}
X \\ Y \\Z
\ea
\right] \left[
\ba{ccc}
X & Y & Z
\ea
\right].
\]
Notice that this matrix is stable, so the point it represents, denoted by $s$, lies in the
smooth locus of $\N(6,3,3)$. Let $B$ denote the blow-up of $\N(6,3,3)$ at $s$ and let
$\b \colon B \to \N(6,3,3)$ denote the blowing-down map.
We saw in section 2 that $X_{10}$ is a smooth locally closed subvariety isomorphic
to an open subset of $\P^{36}$.
It is tempting to think of this projective space as the exceptional divisor of $B$.

The main result of this section is that $X_0 \cup X_{10}$ is isomorphic to an open subset
of $B$. The proof will be largely omitted because it is analogous to the proof of \ref{7.2},
only notationally much more cumbersome.
We begin by noting that the union $X = X_0 \cup X_{10}$ is an open subset of $\M(6,3)$
whose complement has two irreducible components, each of codimension $2$,
namely $\overline{X}_{11}$ and $\overline{X}_{11}^\D$.
The inclusion $\overline{X}_{11} \cup \overline{X}_{11}^\D \subset \M(6,3) \setminus X$
follows from the facts that $X_0$ is open and $X_{10}= X_1 \setminus (X_{11} \cup X_{11}^\D)$
is open in $X_1$, cf. 3.1.
The reverse inclusion follows from the fact, proven in section 9, that
\[
\M(6,3) \setminus X = X_{11} \cup X_{11}^\D \cup X_2 \cup X_3 \cup X_3^\D \cup X_4 \cup X_5
\cup X_6 \cup X_7.
\]
Each set on the right is contained in $\overline{X}_{11} \cup \overline{X}_{11}^\D$,
as shown at \ref{4.2}, \ref{4.4}, \ref{5.4}, \ref{6.3}, \ref{8.3}, \ref{9.1}.

In this section we write $G=G_1$. Let $W \subset \W_1$ be the union of $W_{10}$
with the set of morphisms $\f$ equivalent to
\[
\left[
\ba{cc}
0 & 1 \\
\psi & 0
\ea
\right]
\]
for some $\psi \in W_0$. Clearly $W$ is $G$-invariant and $X$ is its image under the map
$\f \mapsto [\Coker(\f)]$.
We claim that $W$ is open in $\W_1$. Indeed, $W$ is the open subset of injective morphisms
having semi-stable cokernel inside the open $G$-invariant subset of $\W_1$ of morphisms
\[
\f = \left[
\ba{cccc}
u_1 & u_2 & u_3 & c \\
\star & \star & \star & v_1 \\
\star & \star & \star & v_2 \\
\star & \star & \star & v_3
\ea
\right]
\]
satisfying the conditions
\[
\spann \{ u_1, u_2, u_3, cX, cY, cZ \} = V^*, \qquad
\spann \{ v_1, v_2, v_3, cX, cY, cZ \} = V^*.
\]
Let $W^\st \subset W$ be the subset of morphisms having stable cokernel
and let $X^\st$ be its image in $\M(6,3)$.
By analogy with proposition \ref{2.2} we have the following:

\begin{prop}
\label{10.1}
The canonical map $W \to X$ is a categorical quotient map for the action of $G$.
The restricted map $W^\st \to X^\st$ is a geometric quotient.
\end{prop}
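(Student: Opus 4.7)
The plan is to parallel the proofs of Propositions \ref{2.2} and \ref{3.1}, but the heavy lifting -- namely the Beilinson spectral sequence construction that established surjectivity in \ref{2.2} -- is already contained in Proposition \ref{2.1}.

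First, following 4.2.1 of \cite{drezet-maican}, I would show that $\rho(\f_1) = \rho(\f_2)$ if and only if $\overline{G \f_1} \cap \overline{G \f_2} \neq \emptyset$; this reduces to the statement that two semi-stable sheaves are $S$-equivalent precisely when they admit a common specialisation, which translates into intersecting $G$-orbit closures in $W$. Next I would verify surjectivity of $\rho$. For $[\F] \in X_{10}$ this is tautological since $X_{10}$ is defined as the image of $W_{10} \subset W$. For $[\F] \in X_0$, Proposition \ref{2.1}(i) produces $\psi \in W_0$ with $\Coker(\psi) \isom \F$, and the morphism
\[
\f = \left[\ba{cc} 0 & 1 \\ \psi & 0 \ea\right] \in \W_1
\]
lies in $W$ by the very definition of that set. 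Cancelling the $\O(-1)$ summands via the identity block yields $\Coker(\f) \isom \Coker(\psi) \isom \F$.

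With the fibre description and surjectivity in hand, the categorical quotient property is obtained by the argument outlined in 3.1.6 \cite{drezet-maican}: any $G$-invariant morphism $\chi \colon W \to Y$ descends set-theoretically to $\bar\chi \colon X \to Y$ on account of the orbit-closure criterion, and regularity of $\bar\chi$ is verified by pulling back through local sections on the images of $W_0$ and $W_{10}$, using the categorical quotient $W_0/\!/G_0 \isom X_0$ of Proposition \ref{2.3} and the geometric quotient $W_{10}/G_1 \isom X_{10}$ of Proposition \ref{3.1}.

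For the restricted map $W^\st \to X^\st$, stable sheaves are $S$-equivalent only when isomorphic, hence the fibres of $\rho$ on the stable locus coincide with the $G$-orbits. To invoke \cite{popov-vinberg}, theorem 4.2, I need normality of $X^\st$. This follows from \cite{mumford}, remark (2), p.~5, applied to the categorical quotient just established: since $W$ is open in an affine space and therefore normal, $X$ is normal, and $X^\st \subset X$ inherits this property. The main obstacle, as in the analogous situation of section 7, is the regularity check for the categorical factorisation; however, because the two local pictures $X_0 \isom W_0/\!/G_0$ and $X_{10} \isom W_{10}/G_1$ are already available, this reduces mostly to bookkeeping between the two natural descriptions of elements of $W$ rather than a new spectral sequence computation.
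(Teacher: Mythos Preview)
Your strategy is the paper's strategy (the paper writes only ``by analogy with Proposition~\ref{2.2}''), and the orbit-closure step, the surjectivity check, and the final appeal to Mumford's normality remark together with \cite{popov-vinberg}, theorem 4.2, are all correct. There is, however, a genuine gap in the regularity step for the categorical quotient.

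You propose to verify that the set-theoretic factorisation $\bar\chi \colon X \to Y$ is a morphism by ``pulling back through local sections on the images of $W_0$ and $W_{10}$'', using the separate identifications $X_0 \isom W_0/\!/G_0$ and $X_{10} \isom W_{10}/G_1$. This works over $X_0$, which is open in $X$. But $X_{10}$ is \emph{closed} in $X$, not open: knowing that $\bar\chi|_{X_{10}}$ is regular tells you nothing about regularity of $\bar\chi$ on an open neighbourhood in $X$ of a point of $X_{10}$. What the method of 3.1.6 \cite{drezet-maican} actually needs is a construction of $\f$ from $\F$ that is natural enough to work in families, hence to furnish local sections of $\rho$ near every point of $X$, including across the boundary between $X_0$ and $X_{10}$. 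Proposition~\ref{2.1} only records the existence of a resolution for each individual $\F$; it does not supply such a relative construction, so the heavy lifting is not yet done.

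The fix is precisely the analogy with~\ref{2.2} that the paper invokes: run the Beilinson spectral sequence for $\F(1)$ as in the proof of~\ref{2.2} to obtain
\[
0 \lra 3\O(-1) \oplus 9\O \stackrel{\psi}{\lra} 9\O \oplus 3\O(1) \lra \F(1) \lra 0.
\]
For $\F \in X = X_0 \cup X_{10}$ one has $\h^0(\F \tensor \Om^1(1)) \le 1$, whence $\rank(\psi_{12}) \ge 8$; cancelling exactly $8\O$ (rather than $7\O$ as in~\ref{2.2}) and twisting by $\O(-1)$ lands uniformly in $\W_1$, and one checks that the resulting $\f$ lies in $W$. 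This single construction, performed with algebraic operations on the spectral sequence, provides the local sections near points of $X_{10}$ that your piecewise argument lacks.
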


\noi
Recall from section 2 the subset $\W_0^\ss \subset \W_0$ of morphisms that are
semi-stable as Kronecker $V$-modules and denote by $\g \colon \W_0^\ss \to \N(6,3,3)$
the good quotient map for the action of $G_0$.
Note that $\g^{-1}(\{ s \})$ is the smooth subvariety, denoted by $Z$,
of morphisms represented by matrices of the form
\[
\psi = \left[
\ba{c}
v_1 \\ v_2 \\ v_3
\ea
\right] \left[
\ba{ccc}
u_1 & u_2 & u_3
\ea
\right],
\]
where $\{ u_1, u_2, u_3 \}$ is a basis of $V^*$ and ditto for $\{ v_1, v_2, v_3 \}$.
Following 4.3 \cite{drezet-maican}, we define the morphism
\[
\d \colon W \lra \W_0^\ss, \qquad \d(\f) = c\f_{21} - \f_{22} \f_{11}.
\]
Note that $\g \circ \d$ maps the smooth hypersurface $W_{10}$ to $s$.
By the universal property of the blow-up there is a morphism
$\a \colon W \to B$ making the diagram commute:
\[
\xymatrix
{
W \ar[r]^-{\d} \ar[d]^-{\a} & \W_0^\ss \ar[d]^-{\g} \\
B \ar[r]^-{\b} & \N(6,3,3)
}.
\]
Choose a point $\psi \in Z$ as above.
We identify $\TTT_s \N(6,3,3)$ with the fibre over $\psi$ of the normal bundle of $Z$
in $\N(6,3,3)$. Since $Z$ is smooth, $\TTT_{\psi} Z$ can be identified with the space of
matrices
\[
\left[
\ba{c}
v_1 \\ v_2 \\ v_3
\ea
\right] \left[
\ba{ccc}
u_1' & u_2' & u_3'
\ea
\right] + \left[
\ba{c}
v_1' \\ v_2' \\ v_3'
\ea
\right] \left[
\ba{ccc}
u_1 & u_2 & u_3
\ea
\right], \qquad u_1', u_2', u_3', v_1', v_2', v_3' \in V^*.
\]
Choose $\f \in W_{10}$ lying over $\psi$.
The same calculation as in section 7 shows that
\[
\a(\f) = \langle \f_{21} \mod \TTT_{\psi} Z \rangle \in \P(\TTT_{\psi} \W_0^{\ss}/\TTT_{\psi} Z)
= \P(\TTT_s \N(6,3,3)),
\]
where the r.h.s. is identified with the exceptional divisor of $B$.

\begin{prop}
\label{10.2}
The image of $\a$ is an open subset $B_0$ of $B$ and the map $\a \colon W \to B_0$
is a good quotient for the action of $G$.
Thus $X$ is isomorphic to $B_0$.
\end{prop}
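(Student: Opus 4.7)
The proof will parallel Proposition \ref{7.2} with modifications for the good-quotient (rather than geometric-quotient) setting forced by the properly semi-stable sheaves in $X_0$. The four main steps are: (i) verify $\a$ is $G$-invariant; (ii) identify the fibres of $\a$; (iii) show $\a$ has surjective differential at every point; (iv) deduce $X \isom B_0$, whence $\a$ is a good quotient.

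For (i), the map $\g \circ \d$ is $G$-invariant by construction, and at any $\f \in W_{10}$ the canonical normal direction $\nu$ to the smooth hypersurface $W_{10}$ is sent by the differential of the $G$-action to a normal vector at $g\f$, so the calculation $\a(g \f) = \a(\f)$ from \ref{7.2} carries over. For (ii), on $W \setminus W_{10}$ the map $\a$ factors as
\[
W \setminus W_{10} \stackrel{\d}{\lra} \W_0^\ss \setminus Z \stackrel{\g}{\lra} \N(6,3,3) \setminus \{ s \} \stackrel{\b^{-1}}{\lra} B \setminus E;
\]
since $\g$ is a good quotient by $G_0$ and $\d$ is $G$-equivariant in the appropriate sense, the fibres of $\a$ over $B \setminus E$ are precisely the sets on which the stable-equivalence classes of $\Coker(\f)$ coincide. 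On $W_{10}$ the argument of \ref{7.2} transcribes: $\a(\bar \f) = \a(\f)$ forces $\bar \f$ into the $G$-orbit of $\f$ via first matching $\d(\bar \f) = \d(\f)$ after elementary operations and then matching residues modulo $\TTT_\psi Z$.

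For (iii), on $W \setminus W_{10}$ the composition $\b \circ \a = \g \circ \d$ is a submersion at every point because $\g$ is a good quotient of an open subset of the vector space $\W_0^\ss$ and $\d$ restricts on suitable affine slices to a surjection onto its image; at $\f \in W_{10}$, one repeats the calculation of \ref{7.2}, showing $\dd \d_\f(\nu) \equiv \f_{21} \pmod{\TTT_\psi Z}$ and checking that the restriction $W_{10} \stackrel{\a}{\to} E$ has surjective differential. Step (iv) then follows: $B_0 := \a(W)$ is open and smooth because $\a$ is a submersion, and by Proposition \ref{10.1} the $G$-invariant morphism $\a$ factors uniquely through the categorical quotient as $\a = \bar \a \circ \rho$; bijectivity of $\bar \a$ from (ii) together with the normality of $B_0$ yields the isomorphism $\bar \a \colon X \isom B_0$, and $\a$ inherits the good-quotient property from $\rho$. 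The principal obstacle is the tangent-space computation at points of $W_{10}$: while structurally identical to section 7, the $4 \times 4$ matrix structure and the larger group $G_1$ make the bookkeeping notationally cumbersome, which is precisely why the author elects to omit the details.
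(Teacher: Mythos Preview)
Your plan follows the paper's template from Proposition \ref{7.2} closely, and your treatment of $W_{10}$ is exactly right. The gap is in step (iii) at the properly semi-stable points of $W \setminus W_{10}$. Your justification that $\b \circ \a = \g \circ \d$ is a submersion there rests on ``$\g$ is a good quotient'', but good-quotient maps are not submersions in general: already for the weight-$(1,-1)$ action of $\C^*$ on $\C^2$ the good quotient $(x,y)\mapsto xy$ has vanishing differential at the origin. Since $X_0$ genuinely contains properly semi-stable points (Remark \ref{3.3}), and $\N(6,3,3)$ is not a priori smooth at the corresponding strictly semi-stable Kronecker modules ($\gcd(3,3)\neq 1$), you cannot assert smoothness of $B_0$ or openness of $\a(W)$ from a global submersion claim. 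This also undermines your step (iv), where ``$B_0$ is open and smooth because $\a$ is a submersion'' is the input to the normality argument.

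The paper circumvents this by a decomposition you did not make. It restricts the submersion and orbit-fibre argument of \ref{7.2} to $W^\st$, where $\g$ lands in the stable locus of $\N(6,3,3)$ and everything is smooth; this yields an open image $B^\st \subset B$ and a geometric quotient $W^\st \to B^\st$. Separately, on $W\setminus W_{10}$ one has $\d(W\setminus W_{10})=W_0$, and the composite $W\setminus W_{10}\to W_0 \to W_0/\!/G_0$ is directly a good quotient for $G$; since $W_0/\!/G_0$ is already known to be open in $\N(6,3,3)\setminus\{s\}$ (Proposition \ref{2.3}), its preimage under $\b$ is open in $B$, with no differential computation needed at the semi-stable points. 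One then sets $B_0 = B^\st \cup \b^{-1}(W_0/\!/G_0)$ and glues the two good quotients. This splitting into the stable locus (handled geometrically) and the $c\neq 0$ locus (handled via the known good quotient $\g$) is the one new idea beyond section 7, and it is precisely what replaces your unjustified global submersion claim.
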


\begin{proof}
Recall the set $W_0 \subset \W_0^\ss$ of proposition \ref{2.3} and notice that
$\d (W \setminus W_{10})= W_0$.
The composite map $W \setminus W_{10} \stackrel{\d}{\to} W_0 \stackrel{\g}{\to} W_0/\!/G_0$
is a good quotient for the action of $G$. Here $W_0/\!/G_0$ is an open subset of
$\N(6,3,3) \setminus \{ s \}$.
Thus $\a(W \setminus W_{10}) = \b^{-1}(W_0/\!/G_0)$ is an open subset of $B$ and
the restriction $\a \colon W \setminus W_{10} \to \b^{-1}(W_0/\!/ G_0)$
is a good quotient for the action of $G$.
Since $W_{10}$ is contained in $W^\st$, the problem reduces to showing that
$\a(W^\st)$ is an open subset $B^\st$ of $B$ and the restriction map $W^\st \to B^\st$
is a good quotient.
We have then $B_0 = B^\st \cup \b^{-1}(W_0/\!/G_0)$.

Arguing as at \ref{7.2} we can show that the fibres of the map $\a \colon W^\st \to B^\st$
are precisely the $G$-orbits and that $\a$ has surjective differential at every point of $W^\st$.
From this we conclude, as at loc.cit., that $B^\st$ is open and that the map
$\a \colon W^\st \to B^\st$ is a geometric quotient modulo $G$.
\end{proof}


\begin{thebibliography}{99}

\bibitem{modules-alternatives} J.-M.~Dr\'ezet, {\em Vari\'et\'es de modules alternatives}.
Ann. Inst. Fourier {\bf 49} (1999), 57-139.

\bibitem{drezet-maican} J.-M.~Dr\'ezet and M.~Maican, \emph{On the geometry of the
moduli spaces of semi-stable sheaves supported on plane quartics}.
Geom. Dedicata {\bf 152} (2011), 17--49.

\bibitem{drezet-trautmann} J.-M.~Dr\'ezet and G.~Trautmann,
\emph{Moduli spaces of decomposable morphisms of sheaves and
quotients by non-reductive groups}. Ann. Inst. Fourier
{\bf 53} (2003), 107-192.

\bibitem{huybrechts} D.~Huybrechts and M.~Lehn, \emph{The geometry
of moduli spaces of sheaves}. Aspects of Mathematics E31,
Vieweg, Braunschweig, 1997.

\bibitem{king} A.~King, {\em Moduli of representations of finite
dimensional algebras}. Q. J. Math. Oxf. II Ser. {\bf 45} (1994), 515-530.

\bibitem{lepotier} J.~Le~Potier, \emph{Faisceaux semi-stables de
dimension $1$ sur le plan projectif}. Rev. Roumaine Math. Pures
Appl. {\bf 38} (1993), 635-678.

\bibitem{maican} M.~Maican, \emph{On two notions of semistability}. Pacific J. Math.
{\bf 234} (2008), 69-135.

\bibitem{maican-duality} \bysame, \emph{A duality result for moduli spaces
of semistable sheaves supported on projective curves}. Rend. Sem. Mat. Univ. Padova
{\bf 123} (2010), 55-68.

\bibitem{mult_five} \bysame, \emph{On the moduli spaces of semi-stable plane sheaves
of dimension one and multiplicity five}. arXiv:1007.1815

\bibitem{mult_six_one} \bysame, \emph{On the moduli space of semi-stable plane sheaves
with Euler characteristic one and supported on sextic curves}. arXiv:1105.0112

\bibitem{mult_six_two} \bysame, \emph{On the moduli space of semi-stable plane sheaves
with Hilbert polynomial $\operatorname{P}(m)=6m+2$}. arXiv:1109.3918

\bibitem{mumford} D.~Mumford, J.~Fogarty and F.~Kirwan. \emph{Geometric invariant
theory}. 3rd enl. ed. Ergebnisse der Mathematik und ihrer Grenzgebiete, 3 Folge 34.
Springer Verlag, Berlin, 1993.

\bibitem{popov-vinberg} V.~Popov and E.~Vinberg. \emph{Invariant theory} in
\emph{Algebraic geometry IV}, A.~Parshin and I.~Shafarevich (eds.), G.~Kandall (transl.)
Encyclopaedia of mathematical sciences v. 55. Springer Verlag, Berlin, 1994. 

\end{thebibliography}
\end{document}